\newcommand{\R}{\mathbb{R}}
\newcommand{\Sd}{S^{d-1}}
\newcommand{\eps}{\varepsilon}
\newcommand{\dd}{\, \mathrm{d} }
\newcommand{\dv}{\, \mathrm{d} v}
\newcommand{\ds}{\, \mathrm{d} s}
\newcommand{\dt}{\, \mathrm{d} t}
\newcommand{\dw}{\, \mathrm{d} w}
\newcommand{\dsigma}{\, \mathrm{d} \sigma}
\newcommand{\un}{\mathbf{1}}
\newcommand{\phik}{\varphi_\kappa}
\newcommand{\Phik}{\Phi_\kappa}
\newcommand{\fin}{f_{\mathrm{in}}}
\newcommand{\Cprop}[1]{{C_{\mathrm{prop,#1}}}}
\newcommand{\Cic}{C_{\mathrm{ic}}}
\newcommand{\Cevol}{C_{\text{evol}}}
\newcommand{\Ciia}{C_{\text{(ii)}}}
\newtheorem{thm}{Theorem}[section]
\newtheorem{lem}[thm]{Lemma}
\newtheorem{prop}[thm]{Proposition}
\newtheorem{defi}{Definition}
\theoremstyle{remark}
\newtheorem{rem}{Remark}
\begin{document}

\title{\bf Partial regularity in time for the space-homogeneous Boltzmann equation with very soft potentials}
\author{François Golse, Cyril Imbert and Luis Silvestre}
\date{\today}

\maketitle

\begin{abstract}
  We prove that the set of singular times for weak solutions of the homogeneous Boltzmann equation
  with very soft potentials constructed as in Villani (1998) has Hausdorff dimension at most $\frac{|\gamma+2s|}{2s}$
  with $\gamma \in [-4s,-2s)$  and $s \in (0,1)$.
\end{abstract}

 \tableofcontents

\setcounter{tocdepth}{1}
\tableofcontents

\section{Introduction}

The space-homogeneous Boltzmann equation   posed in $\R^d$, with $d \ge 2$, is
\[ \partial_t f = Q (f,f) \]
where $f= f(t,v) \ge 0$ and $v \in \R^d$, $t>0$ and the collision operator $Q(f,f)$ is defined as,
\[ Q(f,f)(v) = \iint_{\Sd \times \R^d} (f(w')f(v') -f (w) f(v) ) B(w-v,\sigma) \dsigma \dw  \]
with
\[ v' = \frac{v+w}2 + \frac{|v-w|}2 \sigma \quad \text{ and } w' = \frac{v+w}2 - \frac{|v-w|}2 \sigma \]
and the function $B$ is defined by,
\[B(z,\sigma) = |z|^\gamma b (\cos \theta) \quad \text{ where } \quad \cos \theta = \frac{z}{|z|}\cdot \sigma.\]
We are interested in the non cut-off case, \textit{i.e.} we assume that $\gamma > -d$ and that the function $b$ satisfies for some $s \in (0,1)$,
\begin{equation}
  \label{e:b}
  b(\cos \theta) \simeq \frac{1}{|\sin (\theta/2)|^{(d-1)+2s}} \quad \text{ where } \quad \sin (\theta/2) = \frac{v'-v}{|v'-v|} \cdot \sigma.
\end{equation}
The Boltzmann equation is supplemented with an initial condition $f(0,v) = \fin (v)$ for $\fin \ge 0$ measurable with finite mass, entropy and energy. Let $m_0,M_0,E_0,H_0$ be positive constants such that,
\begin{equation}
  \label{e:hydro}
  0 < m_0 \le \int_{\R^d} \fin(v) \dv \le M_0 \quad \text{ and } \quad \int_{\R^d} \fin(v) |v|^2 \dv \le E_0
  \quad \text{ and } \quad \int_{\R^d} \fin \ln \fin (v) \dv \le H_0.
\end{equation}

In the case of hard or moderately soft potentials, \textit{i.e.} $\gamma + 2s \ge 0$, it is known that global solutions  smooth solutions exist \cite{MR4433077}.
This work is concerned with the case of very soft potentials: $\gamma +2s < 0$. It is not known in this case wether solutions are smooth or not. We aim at estimating how large is the set of times around which weak solutions are not essentially bounded. This set will be referred to as the singular set. Before defining it rigourously, we first recall the definition of weak solutions used in this paper.

\subsection{Weak solutions}

We first recall the notion of H-solutions proposed by C. Villani \cite{MR1650006} for the space-homogeneous Boltzmann (and Landau) equation(s) and then introduce a notion of suitable weak solutions in the spirit of L.~Caffarelli, R. Kohn and L. Nirenberg \cite{MR673830} for the Navier-Stokes equations. 

\paragraph{H-solutions.}
 C. Villani introduced in \cite{MR1650006} a new class of weak solutions for the Boltzmann and the Landau equations in $\R^3$, encompassing the very soft potential case. The definition relies on the a priori estimate given by the entropy dissipation of solutions along the flow. These solutions satisfy the conservation of mass, momentum and energy,
\[ \forall t>0, \qquad \int_{\R^3} f(t,v) \left( \begin{array}{c} 1 \\ v \\ |v|^2 \end{array} \right) \dv = \int_{\R^3} \fin (v) \left(\begin{array}{c}1 \\ v \\ |v|^2 \end{array} \right) \dv, \]
together with
\[ \forall t>0, \qquad \int_{\R^3} f \ln f (t,v) \dv + \int_0^t D (f(s,\cdot)) \ds \le \int_{\R^3} \fin \ln \fin (v) \dv, \]
where the entropy dissipation $D(f)$ is defined by
\[ D (f) = \iiint_{\R^d \times \R^d \times \Sd} (f(w')f(v') -f(w)f(v)) \ln \frac{f(w')f(v')}{f(w)f(v)} B(v-w, \sigma) \dv \dw \dsigma  \ge 0. \]
Using the fact that $(x-y) \ln (x/y) \ge 4 |\sqrt{x}-\sqrt{y}|^2$, this implies in particular that
\[ \sqrt{b(\cos \theta)} \frac{\sqrt{F(v',w')} -\sqrt{F(v,w)}}{|v-w|} \in L^2 ((0,T) \times \R^d \times \R^d \times \Sd) \]
with $F(v,w) = f(v) f(w) |v-w|^{\gamma +2}$. 

J. Chaker and the third author recently proved \cite[Corollary~0.4]{chaker2022entropy} that these H-solutions
are in fact weak solutions.  

\paragraph{Suitable weak solutions.}
In order to present the notion of suitable weak solutions used in this paper,
we recall that the collision operator can be written as follows \cite{MR3551261},
\[ Q (f,f)(v) = \int_{\R^d} (f(v') K (v,v') - f(v) K (v',v)) \dv'\]
where the kernel $K \ge 0$ depends on $f$,
\begin{equation}
  \label{e:kernel}
  K (v,v') = 2^{d-1} |v'-v|^{-1} \int_{w \perp v'-v} f(v+w) r^{-d+2} B(r,\sigma) \dw
\end{equation}
with $r = \sqrt{|v'-v|^2+|w|^2}$ and $\sigma = \frac{v-v'-w}{|v-v'-w|}$.

We then compute (formally) for any convex function $\varphi$,
\begin{align*}
  \frac{d}{dt} \int_{\R^d} \varphi (f(t,v)) \dv = &  \int_{\R^d} \varphi' (f) Q (f,f) \dv \\
  =& \iint_{\R^d \times \R^d} \varphi'(f) \bigg(f' K(v,v') - f K(v',v) \bigg)  \dv \dv' \\
  \intertext{use now the function $d_\varphi (r,\rho) = \varphi (\rho) - \varphi (r) - \varphi'(r) (\rho-r)$
  in the first term and make the change of variables $(v,v') \mapsto (v',v)$ in the second term,}
  = &  - \iint_{\R^d \times \R^d} d_\varphi (f,f') K (v,v') \dv \dv' \\
    & + \iint_{\R^d \times \R^d} \bigg(\varphi (f') -\varphi (f)  + \varphi' (f) f - \varphi'(f') f'\bigg) K (v,v') \dv \dv' \\
  = &  - \iint_{\R^d \times \R^d} d_\varphi (f,f') K (v,v') \dv \dv' \\
    & + \int_{\R^d} \Phi (f) \left\{ \int_{\R^d} \bigg(K (v,v') - K (v',v) \bigg) \dv'\right\} \dv
\end{align*}
with $\Phi (f) = (\varphi' (f) f -\varphi (f))$. 
The classical cancellation lemma (see Lemma~\ref{l:cancel} in the next section) allows to compute the last term
in parentheses and get $f \ast_v |\cdot|^\gamma$ (up to some positive constant $c_c$).
The previous computation  suggests to include this family of inequalities associated with non-decreasing convex functions. 

This is reminiscent of the notion of entropy solutions for scalar conservation laws \cite{MR0267257}. It is known that for such a notion of weak solutions for scalar conservation laws, it is sufficient to consider so-called Kruzhkhov entropies $|f-a|$, or simply $(f-a)_+$,  for all $a>0$. Indeed, a general $C^2$ convex function $\varphi$ can be written as follows,
\begin{equation}
  \label{e:convex-rep}
  \forall r>0, \quad \varphi (r) = \varphi(0)+\varphi' (0)r + \int_0^{+\infty} \varphi''(a) (r-a)_+ \dd a .
\end{equation}

In the case of the homogeneous Boltzmann equation, we use non-decreasing convex functions $\varphi$ such that $\varphi (0) = \varphi'(0)=0$. We thus consider $\varphi_a (r) = (r-a)_+$ for all $a>0$. In this case,
\[
  d_{\varphi_a} (f,f') =  \begin{cases} (f'-a)_+ &\text{ if } f \le a \\ (a-f')_+ &\text{ if } f > a \end{cases} 
\]
and $\Phi_a (f) = a \un_{\{ f > a \}}$.
\begin{defi}[Suitable weak solutions] \label{d:suitable}
Let $f\colon (0,T) \times \R^d$ be an $H$-solution of the homogeneous Boltzmann equation. 
It is a \emph{suitable weak solution} if for any $a>0$, we have
\begin{equation}
  \label{e:generalentropy}
  \frac{d}{dt} \int_{\R^d} \varphi_a (f(t,v)) \dv + \int_{\R^d \times \R^d} d_{\varphi_a} (f,f') K (v,v') \dv \dv' \le c_c \int_{\R^d} \Phi_a (f) (f \ast_v |\cdot|^\gamma) \dv
\end{equation}
in the sense of distributions in $(0,T)$, where $\Phi_a (f) = a \un_{\{ f > a \}}$ and   \( d_{\varphi_a} (r,\rho) = \varphi_a (\rho) - \varphi_a (r) - \un_{\{ r>a\}} (\rho-r) \ge 0 \)  and $c_c$ only depends on the function $b$. 
\end{defi}
\begin{rem}
If $f$ is smooth, then the inequality \eqref{e:generalentropy} is in fact an equality. 
\end{rem}
\begin{rem}
  For a general convex function $\varphi$,
  the function $d_\varphi$ is sometimes  referred to as the Bregman distance associated with the convex function $\varphi$ \cite{MR0215617}. 
\end{rem}
\begin{rem}
  If $\varphi$ is twice differentiable, then the function $\Phi$ satisfies $\Phi'(r) = r \varphi'' (r)$. In particular, it is easy to see in this case that $\Phi$ is non-decreasing. Such a property
  holds true for the $\varphi$'s considered in Definition~\ref{d:suitable}. Moreover, since $\varphi (0)=0$,  we have $\Phi (0)=0$ and $\Phi \ge 0$.
\end{rem}
\begin{rem}
  The constant $c_c$ comes from the cancellation lemma (see Lemma~\ref{l:cancel}). 
\end{rem}

\paragraph{Singular times.}
As announced above, we aim at estimating the size of the set of singular times. 
\begin{defi}[Singular times]\label{defi:singular}
Let $f\colon (0,T) \times \R^d$ be a suitable weak solution of the homogeneous Boltzmann equation with initial condition $f(0,v)=\fin (v)$.
A time $t \in (0,T)$ is \emph{regular} if $f$ is bounded in $(t-\delta,t] \times \R^d$ for some $\delta >0$. A time $t \in (0,T)$ is \emph{singular} if it is not regular. 
\end{defi}

\subsection{Main result}

The main result of this article is an estimate of the size of the singular set, that is to say of the set of all singular times. 
\begin{thm}[Partial regularity] \label{t:main}
  Let $f$ be a suitable weak solution of the homogeneous Boltzmann equation with $\gamma \in [-4s,-2s)$ associated with an initial data $\fin$ satisfying \eqref{e:hydro}.
  The Hausdorff dimension of the set of its singular times is at most $|\gamma+2s|/(2s)$. 
\end{thm}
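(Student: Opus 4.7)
The plan is to adapt the Caffarelli--Kohn--Nirenberg partial regularity strategy to the integro-differential setting of the homogeneous Boltzmann equation. The proof rests on three ingredients: an $\varepsilon$-regularity criterion, a global dissipation estimate, and a Vitali covering argument. I would first establish an $\varepsilon$-regularity criterion of the following form: there exist exponents $p,q\ge 1$, a level $a_0$ and a constant $\varepsilon_0>0$, all depending only on the hydrodynamic quantities $m_0,M_0,E_0,H_0$, such that for any suitable weak solution $f$, any $\tau\in(0,T)$ and any $\delta\in(0,\tau)$,
\[ \int_{\tau-\delta}^{\tau} \|(f(s,\cdot)-a_0)_+\|_{L^p(\R^d)}^q \ds \;\le\; \varepsilon_0 \, \delta^{|\gamma+2s|/(2s)} \]
implies that $f$ is essentially bounded on $(\tau-\delta/2,\tau]$. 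The natural approach is a De~Giorgi iteration along a sequence of levels $a_k=a_0(2-2^{-k})$: one plugs $\varphi_{a_k}$ into \eqref{e:generalentropy}, extracts from $d_{\varphi_{a_k}}(f,f')K(v,v')$ a fractional-Sobolev-type seminorm via a coercivity lemma comparing $K$ from below to the Boltzmann fractional kernel, and uses interpolation together with the Villani $H$-solution estimate to absorb the reactive term $c_c\int \Phi_{a_k}(f)\,(f\ast|\cdot|^\gamma)\,\dv$ into this dissipation.

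The second step is to verify that the $\varepsilon$-regularity functional is globally integrable in time. Integrating \eqref{e:generalentropy} with $a=a_0$ on $(0,T)$ and controlling the reactive right-hand side by the same interpolation as in the $\varepsilon$-criterion yields
\[ \int_0^T \|(f(s,\cdot)-a_0)_+\|_{L^p(\R^d)}^q \ds \;\le\; C(m_0,M_0,E_0,H_0). \]
The final step is the covering argument. At every singular time $\tau$ and every scale $\delta$ small, the hypothesis of the $\varepsilon$-criterion must fail; writing $\Sigma$ for the singular set, one covers it by intervals $(\tau-\delta_\tau,\tau]$ with arbitrarily small $\delta_\tau$ and
\[ \int_{\tau-\delta_\tau}^{\tau}\|(f-a_0)_+\|_{L^p}^q \ds \;>\; \varepsilon_0\,\delta_\tau^{|\gamma+2s|/(2s)}. \]
A Vitali-type disjointification then produces a countable disjoint sub-collection $\{(\tau_i-\delta_i,\tau_i]\}$ whose $5$-dilations still cover $\Sigma$, and
\[ \sum_i \delta_i^{|\gamma+2s|/(2s)} \;\le\; \varepsilon_0^{-1}\sum_i \int_{\tau_i-\delta_i}^{\tau_i}\|(f-a_0)_+\|_{L^p}^q \ds \;\le\; \varepsilon_0^{-1}\,C, \]
so that the $|\gamma+2s|/(2s)$-dimensional Hausdorff measure of $\Sigma$ is finite, which is stronger than the claimed dimension bound.

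The main obstacle is the first step. The kernel $K$ in \eqref{e:kernel} is anisotropic and depends on $f$ itself, so its coercivity on level sets must be controlled quantitatively in terms only of the hydrodynamic bounds $m_0,M_0,E_0,H_0$, and not of $\|f\|_{L^\infty}$. More delicately, the reactive convolution $f\ast|\cdot|^\gamma$ with $\gamma<-2s$ is supercritical relative to the diffusive order $2s$ produced by $K$, and the precise scaling exponent $|\gamma+2s|/(2s)$ in the $\varepsilon$-criterion is dictated by the deficit between these two orders. The restriction $\gamma\in[-4s,-2s)$, equivalent to $\gamma+4s\ge 0$, is exactly what allows the Villani entropy dissipation (which places $\sqrt{f}$ in a weighted fractional Sobolev space, and hence $f$ in an $L^q_{t,v}$ space strictly better than $L^1$) to close the absorption in the De~Giorgi iteration at the critical exponent $|\gamma+2s|/(2s)$; the endpoint $\gamma=-2s$ corresponds to the regular case and $\gamma=-4s$ to Hausdorff dimension $1$.
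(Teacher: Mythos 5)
Your outline takes a genuinely different route from the paper's. You propose the Caffarelli--Kohn--Nirenberg / De~Giorgi strategy: an $\varepsilon$-regularity criterion obtained by running a De~Giorgi iteration in the levels $a_k$, feeding $\varphi_{a_k}$ into \eqref{e:generalentropy}, extracting a fractional Sobolev seminorm from the coercivity of $K$, absorbing the reactive term, and then a Vitali covering. This is precisely the method that was used for the Landau--Coulomb equation in \cite{MR4517683}, and the paper explicitly remarks in the introduction that its strategy here is \emph{different}. Instead of a De~Giorgi iteration, the paper proves a Riccati-type differential inequality for the approximate $L^p$-norm $\int \phik(f)\,\dv$ (Lemma~\ref{l:evol-lp}), where $\phik$ is a sublinear convex truncation of $r^p$. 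This Riccati inequality, combined with an elementary ODE lemma (Lemma~\ref{l:technical}), shows that a small time-average of $(\int\phik(f))^{1/p}$ at scale $T$ forces an $L^\infty_t L^p_k$ bound, and then the paper invokes the third author's conditional boundedness criterion (Theorem~\ref{t:prodi-serrin}), re-proven here for suitable weak solutions via a time-dependent truncation $a(t)$, to pass from $L^\infty_t L^p_k$ to $L^\infty_{t,v}$. Your final covering argument (Vitali, disjointification, summation against the global $L^1_t L^p_v$ control from the entropy dissipation estimate) matches the paper's Section~\ref{s:partial} essentially word for word, including the computation of the exponent $1-\frac{1}{p_0\alpha_0}=\frac{|\gamma+2s|}{2s}$.

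The real gap is your Step~1. You correctly identify it as ``the main obstacle,'' but you leave it unresolved, and there are two distinct difficulties that the paper handles by routing around the De~Giorgi machinery entirely. First, the coercivity of $K$ from below depends only on the hydrodynamic bounds but is \emph{anisotropic}: one has a non-degeneracy cone in $v'$ for fixed $v$ (Lemma~\ref{l:cone}) and a thin non-degeneracy set near a sphere in $v$ for fixed $v'$ (Lemma~\ref{l:sphere}). From these the paper obtains a gain-of-integrability lower bound for the dissipation (Lemma~\ref{l:lower}: control of $\|(f\wedge\kappa)^p\|_{L^{p_0}_{k_0}}$), not a clean Gagliardo seminorm that would plug into a textbook De~Giorgi nonlinearization. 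Second, your claim that the iteration closes ``at the critical exponent $|\gamma+2s|/(2s)$'' is asserted rather than derived; the paper obtains Hausdorff dimension $\le 1-\frac1{p\alpha}$ for each admissible $p<\frac{d}{d+\gamma}$ and $\alpha<\alpha_0$, and only in the limit $p\to p_0$, $\alpha\to\alpha_0$ recovers the endpoint $\frac{|\gamma+2s|}{2s}$. Establishing an $\varepsilon$-criterion directly at the endpoint (which would moreover give $\mathcal{H}^{|\gamma+2s|/(2s)}(\Sigma)<\infty$, strictly stronger than the theorem's statement) would require a sharper argument that neither you nor the paper provides. So while the route is plausible in outline and the covering step is correct, the heart of the matter — the $\varepsilon$-regularity criterion — is not supplied, and the paper's own proof deliberately avoids it in favour of the Riccati/conditional-boundedness pair.
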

\begin{rem}
It is not known if bounded weak solutions are smooth or not. We think that they are but proving it would require a lot of extra work. 
\end{rem}
\begin{rem}
The case $\gamma \ge -2s$ is contained in the conditional regularity estimate from \cite{MR4433077}. 
\end{rem}
\begin{rem}
In dimension $d=3$, if $\gamma = -3$ and $s \to 1$, the Hausdorff dimension goes to $1/2$. This is consistent with the result for the Landau equation with a Coulomb potential contained in \cite{MR4517683}.  
\end{rem}

Even if the nature of the main result is the same as the one for the Landau-Coulomb equation \cite{MR4517683}, the strategy of proof is different. In \cite{MR4517683}, a De Giorgi method was used to prove
that a solution is bounded in a time interval as soon as the entropy integrated over this interval is sufficiently small. This lemma was supplemented with an iteration argument to ensure that after scaling, this criterion is satisfied around times where the entropy dissipation is small.

The proof of Theorem~\ref{t:main} relies on the propagation of the $L^p$-norm of a solution. At a formal level, it is possible to ensure that if the $L^p$-norm is finite at time $t_0$, it will be controlled
for some (small) time. This is obtained by proving that the $L^p$-norm satisfies a Riccati equation. In order to make this formal argument rigourous, we consider an approximate $L^p$-norm. It is defined by a
sublinear convex function. More precisely, it is obtained by truncating the derivative of $r^p$ at a threshold $\kappa$. It is then necessary to derive an approximate Riccati equation. In order to do so,
we use the computation presented above about the evolution with time of the quantity $\int_{\R^d} \varphi (f(t,v)) \dv$.

We then use the conditional boundedness result obtained by the third author in \cite{MR3551261}. It asserts (more or less) that a solution is bounded in a time interval as soon as its $L^\infty_t L^p_v$-norm is small for $p > 3/2$. Since we work with suitable weak solutions, it is necessary to modify the proof of \cite{MR3551261}. We do so by using ideas coming from \cite{chaker2022entropy} and \cite{ouyang2023conditional}. 

While we were working on this project, we learned that the solutions of the space-homogeneous Landau equation with Coulomb potentials do not blow up. It is a consequence of the fact that the Fisher information decreases along the flow of the equation, which was recently proved by Nestor Guillen and the third author in \cite{guillen2023landau}.
In view of the result for the Landau equation, and recalling that this equation is obtained as a limit of the Boltzmann equation for a strong angular singularity, it is conceivable that the Fisher information also decreases for the space-homogeneous Boltzmann equation with very soft potentials. Such a result is still an open problem, and it would imply the global regularity of the solutions. For the Landau equation, there is a condition on the interaction potential for the monotonicity of the Fisher information to hold. We would expect that if one could prove a corresponding result in the case of the Boltzmann equation, there would also be a similar condition on the collision kernel. The exact nature of this condition is hard to predict at the moment. Because of this, it is possible that the partial regularity obtained in this paper may have some applicability to the cases in which the Fisher information is not monotone, if such a result was ever obtained.

\subsection{Review of literature}

We review previous works related to the Boltzmann equation without cut-off. The literature related to this topic is very rich and the few following paragraphs are by no means exhaustive.
The reader is referred to the references contained in the  articles cited below for a more complete picture of the available results.

C. Villani \cite{MR1650006} constructed weak solutions of the homogeneous Boltzmann equation in the non-cut off case for
moderately soft potentials. As far as very soft potentials are concerned, he introduced a new class of weak solutions, based on the entropy dissipation estimate. The coercivity property of the collision operator of the
Boltzmann equation was further investigated in \cite{ADVW}. This influential article also contains a tool that is nowadays classical, the cancellation lemma (see Lemma~\ref{l:cancel} below). Ten years later, 
P. Gressman and R. Strain \cite{MR2784329} constructed global solutions of the Boltzmann equation close to equilibrium in the inhomogeneous setting. Independently, R. Alexandre, Y. Morimoto, S. Ukai,  C.-J. Xu and T. Yang \cite{AMUXY} obtained conditional regularity results in the inhomogeneous case too. 

The third author introduced in \cite{MR3551261} new ideas to study the conditional regularity of solutions of the Boltzmann equation in the inhomogeneous setting without cut-off and for moderately soft potentials. He observed that if  mass, energy and entropy are under control then  the ellipticity of the collision operator is strong enough to get a pointwise estimate of the solution (see Theorem~\ref{t:prodi-serrin}). This observation was further developped with the second author in \cite{MR4049224}. In this work,  a class of kinetic equations with integral diffusion is introduced and a modulus of continuity is derived in the spirit of the classical works by De Giorgi and Nash. The final conditional regularity result in this setting appears in \cite{MR4433077}. It takes the form of a  global regularity estimate satisfied by solutions as long as the hydrodynamical bounds hold true. In the homogeneous setting, this implies (at least formally) that there exists a global smooth solution for moderately soft potentials. 

Together with M. Gualdani and A.  Vasseur \cite{MR4517683}, the first and second authors studied partial regularity in time of solutions of the Landau equation in the case of a Coulomb potential.
They used De Giorgi method in order to prove that the set of singular times is at most of Hausdorff dimension $1/2$. 

J. Chaker and the third author obtained in \cite{chaker2022entropy} a coercivity estimate that can be seen as the counterpart of Desvillettes's entropy dissipation inequality \cite{MR3369941}. In particular, it implies that H-solutions are weak for very soft potentials. 
Z. Ouyang and the third author extended in \cite{ouyang2023conditional} the pointwise bound from \cite{MR3551261} to the case where $x$ lies in a domain and the equation is supplemented with  boundary conditions (in-flow, bounce-back, specular-reflection and diffuse-reflection). 

\paragraph{Organisation of the article.}
In Section~\ref{s:prelim}, we gather known facts and technical results that will be used in the remainder of the article.
We recall the coercivity properties of the collision operator, associated with its non-degeneracy, as well as
the propagation of moments by the Boltzmann equation in the very soft potential case. We also state and prove various
interpolation inequalities taylored for our needs. Section~\ref{s:coercivity} contains coercivity estimates associated with
the dissipation of approximate Lebesgue norms.  Section~\ref{s:propagation} is devoted to the study of the evolution of approximate $L^p$-norms. In Section~\ref{s:criterion}, the conditional boundedness result obtained by the third
author in \cite{MR3551261} is extended to suitable weak solutions. In the final section~\ref{s:partial}, we prove our main
result, Theorem~\ref{t:main}. Suitable weak solutions are constructed in an appendix, see Section~\ref{s:construction}.

\paragraph{Notation.}
The unit ball of $\R^d$ centered at the origin is denoted by $B_1$ and $\Sd$ denotes the unit sphere in $\R^d$.  The ball of radius $r>0$ centered at the origin is denoted by $B_r$. 

For $p \in [1,\infty]$, the  space $L^p$ is the usual Lebesgue space in $\R^d$.
For $p \in [1,\infty]$ and $k \in \R$,  $L^p_k$ denotes the following weighted Lebesgue space,
\[ L^p_k = \{ f \colon \R^d \to \R, \text{measurable s.t.} \|f\|_{L^p_k} < +\infty \} \]
with $\langle v \rangle = (1+|v|^2)^{\frac12}$ and
\[ \|f\|_{L^p_k} = \left(\int_{\R^d} f^p (v) \langle v \rangle^{kp} \dv \right)^{\frac1p}.\]

\section{Preliminaries}
\label{s:prelim}

\subsection{About the Bregman distance}

\begin{lem}[Bregman distance]\label{l:breg}
The function $d_{\varphi} (r,\rho)$ is non-increasing with respect to $\rho$ in $(-\infty,r]$
and non-decreasing with respect to $r$ in $[\rho,+\infty)$. 
\end{lem}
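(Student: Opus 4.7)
The plan is to derive both monotonicity statements directly from convexity of $\varphi$, treating them as one-variable calculus facts once the other argument is frozen.

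For the monotonicity in $\rho$, I would simply differentiate the definition of $d_\varphi$ in $\rho$ with $r$ held fixed, obtaining formally
\[
\partial_\rho d_\varphi(r,\rho) = \varphi'(\rho) - \varphi'(r).
\]
Convexity means $\varphi'$ is non-decreasing, so this quantity is $\leq 0$ whenever $\rho \leq r$, giving the first claim. For the non-smooth function $\varphi_a = (\cdot - a)_+$ that is actually used in the paper, the symbol $\varphi'$ is replaced by $\un_{\{\cdot > a\}}$ as in Definition \ref{d:suitable}; since this function is still non-decreasing, the same conclusion holds either by direct case analysis (split according to whether $r$, $\rho$ lie above or below $a$) or by approximating $\varphi_a$ by a sequence of $C^1$ convex functions.

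For the monotonicity in $r$, I would start from the representation
\[
d_\varphi(r,\rho) = \int_\rho^r \bigl(\varphi'(r) - \varphi'(t)\bigr) \dd t \qquad (r \geq \rho),
\]
which follows from the fundamental theorem of calculus applied to $\varphi(r) - \varphi(\rho) = \int_\rho^r \varphi'(t) \dd t$. Then, for $r' \geq r \geq \rho$, a short manipulation gives
\[
d_\varphi(r',\rho) - d_\varphi(r,\rho) = \int_\rho^r \bigl(\varphi'(r') - \varphi'(r)\bigr)\dd t + \int_r^{r'} \bigl(\varphi'(r') - \varphi'(t)\bigr)\dd t,
\]
and both integrands are $\geq 0$ by monotonicity of $\varphi'$. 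This yields the second claim.

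The only genuine obstacle is notational: since the $\varphi_a$ actually used in the paper are only piecewise linear, one has to decide what $\varphi'_a$ means and check that the arguments above survive. But this is transparent, since $\varphi_a$ has exactly one kink at $a$ and its (right) derivative $\un_{\{\cdot > a\}}$ is non-decreasing, which is all that the two arguments require.
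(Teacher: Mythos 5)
Your proof is correct. For the monotonicity in $\rho$ you compute $\partial_\rho d_\varphi = \varphi'(\rho)-\varphi'(r)\le 0$, which is exactly the paper's argument. For the monotonicity in $r$, however, you take a genuinely different route: the paper simply differentiates a second time, obtaining $\partial_r d_\varphi(r,\rho) = -\varphi''(r)(\rho-r)\ge 0$ for $\rho\le r$, whereas you avoid $\varphi''$ entirely by working from the integral representation $d_\varphi(r,\rho)=\int_\rho^r(\varphi'(r)-\varphi'(t))\dd t$ and comparing two values $r'\ge r$ by a finite-difference identity whose integrands are nonnegative because $\varphi'$ is non-decreasing. Your version buys a little extra generality: it needs only the monotonicity of $\varphi'$ (equivalently, convexity of $\varphi$), not pointwise twice-differentiability, which makes the extension to the piecewise-linear truncations $\varphi_a(r)=(r-a)_+$ transparent. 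The paper's computation is shorter but, taken literally, requires $\varphi''$ to exist; for the $\varphi_a$ used in Definition~\ref{d:suitable} one would have to interpret $\varphi''$ distributionally or run the approximation/case-analysis argument you sketch at the end. Your explicit remark on that point is a useful clarification rather than a deviation.
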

\begin{proof}
  It is enough to compute partial derivatives. For $\rho \le r$, we have,
  \[
    \frac{\partial d_\varphi}{\partial \rho}(r,\rho)  = \varphi' (\rho) - \varphi' (r) \le 0 \quad \text{ and } \quad
    \frac{\partial d_\varphi}{\partial r}(r,\rho) = - \varphi'' (r) (\rho -r) \ge 0. \qedhere
  \]
\end{proof}

\subsection{Collision kernel}

We state here the cancellation lemma in the form contained in \cite{MR3551261}. 
\begin{lem}[Cancellation lemma -- {\cite[Lemma~1]{ADVW}}] \label{l:cancel}
Let $K$ be given by \eqref{e:kernel}. Then,
  \[\int_{\R^d} \bigg(K (v,v') - K (v',v) \bigg) \dv' =  f \ast_v \mathcal{R}\] with
\[ \mathcal{R} (r) = \int_{\Sd} \left(\left(\frac{1-\sigma \cdot e}2\right)^{-d/2} B \left( \frac{\sqrt{2} r}{\sqrt{1-\sigma \cdot e}} , \sigma \cdot e \right) - B(r, \sigma \cdot e) \right) \dsigma \]
for an arbitrary unit vector $e \in \Sd$ (the formula is independent of the choice of $e$).

In particular, when $B(r,\cos \theta) = r^\gamma b(\cos \theta)$, we have $\mathcal{R}(r) =c_c r^\gamma$  for some constant $c_c>0$ only depending on the function $b$. 
\end{lem}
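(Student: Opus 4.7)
My plan follows the classical argument of Alexandre--Desvillettes--Villani--Wennberg \cite{ADVW}. Both $\int K(v,v')\dv'$ and $\int K(v',v)\dv'$ will be reduced to convolutions of $f$ with a radial kernel; the difference then produces the announced $f\ast_v \mathcal R$, and the homogeneity $B(r,\cdot) = r^\gamma b$ reduces $\mathcal R$ to a constant multiple of $r^\gamma$.

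I would first rewrite both integrals in a common set of ``joint coordinates'': set $u = v' - v$, so the integration runs over $u \in \R^d$ and $w$ in the hyperplane $u^\perp$, with $r = \sqrt{|u|^2 + |w|^2}$ and $\sigma$ equal to $-(u+w)/r$ in the first integral (where $f$ is evaluated at $v+w$) and $(u-w)/r$ in the second (where $f$ is evaluated at $v+u+w$). The goal is then to re-express each as a convolution by introducing a variable $z$ such that $f$ appears as $f(v+z)$, and integrating out the remaining parameters over a sphere. For the second integral, take $z = u+w$; the orthogonality $u \cdot w = 0$ is equivalent to $|u-z/2|^2 = |z|^2/4$, so $u$ is parametrized by a unit vector via $u = z/2 + (|z|/2) \sigma$, $w = z/2 - (|z|/2)\sigma$ for $\sigma \in \Sd$. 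This yields $r = |z|$ and a Jacobian that, together with the measure on the hyperplane $u^\perp$, produces exactly the factor $\bigl((1-\sigma \cdot e)/2\bigr)^{-d/2}$ appearing in the statement and transforms $B(r, \sigma \cdot e)$ into $B(\sqrt{2}\,r/\sqrt{1-\sigma \cdot e}, \sigma \cdot e)$. For the first integral, the simpler choice $z = w$ (so $u$ ranges over $z^\perp$ in polar coordinates) leads directly to a convolution with $B(r, \sigma \cdot e)$ after integrating out the polar radius of $u$ via $r = \sqrt{|u|^2 + |z|^2}$.

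Subtracting the two convolutions then yields $f\ast_v \mathcal R$. Since the integrand depends on $\sigma$ only through $\sigma \cdot e$, the formula for $\mathcal R$ is invariant under the choice of $e$, which justifies the statement. For $B(r,\cos\theta) = r^\gamma b(\cos\theta)$, the rescaling $r \mapsto \lambda r$ inside the integrand shows that $\mathcal R$ is positively $\gamma$-homogeneous, so $\mathcal R(r) = c_c\, r^\gamma$ with $c_c$ the remaining spherical integral; integrability of this integral in the non-cutoff regime follows from the standard cancellation between the two bracketed terms near the grazing direction $\sigma \parallel e$, using $s < 1$.

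The principal technical obstacle is the Jacobian bookkeeping for the second integral: the constraint surface $\{(u,w) : u \perp w,\ u+w = z\}$ must be identified with $\Sd$ through the $\sigma$-parametrization, and the measure induced on it by $du\,d\mathcal{H}^{d-1}_{u^\perp}(w)$ must be compared carefully with the surface measure on $\Sd$. Once this computation is set up correctly, the rest of the proof is a direct substitution.
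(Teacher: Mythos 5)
The paper does not prove this lemma itself; it is cited from Alexandre--Desvillettes--Villani--Wennberg and recorded in the Carleman form from Silvestre's paper, so there is no in-paper proof to check against. Your outline recovers the standard computation, but you have matched the two substitutions to the wrong halves of $\mathcal R$. For the second integral with $z=u+w$: since $u\perp w$, one has $r=\sqrt{|u|^2+|w|^2}=|u+w|=|z|$ exactly, so the radial argument of $B$ is already $|z|$ and is not dilated; a Jacobian is only a multiplicative factor on the measure and cannot ``transform'' the first argument of $B$. Tracking the densities ($\dd u\,\dd\mathcal{H}^{d-1}_{u^\perp}(w)=|u|\,\delta(u\cdot w)\,\dd u\,\dd w$, the sphere $\{u\cdot z=|u|^2\}$ has $|\nabla_u(u\cdot z-|u|^2)|=|z|$ and surface measure $(|z|/2)^{d-1}\dsigma$, while $|u|=\tfrac{|z|}{\sqrt2}\sqrt{1+\sigma\cdot e}$), the $|u|$-factors cancel with the $|u|^{-1}$ in $K$ and the powers of $|z|$ cancel with $r^{-d+2}$, yielding $\int K(v',v)\dd v' = f\ast_v\int_{\Sd}B(|\cdot|,\sigma\cdot e)\dsigma$: the \emph{unscaled} term, with density $1$, not $\bigl((1-\sigma\cdot e)/2\bigr)^{-d/2}$.

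It is the first integral, with $z=w$, that produces the scaled term: there $u$ ranges over $z^\perp$, $r=\sqrt{|u|^2+|z|^2}\ge|z|$, and the angular argument of $B$ is $(|u|^2-|z|^2)/r^2$. Parametrising $|u|=|z|\tan\psi$ (so $r=|z|/\cos\psi$), the deviation angle is $\theta=\pi-2\psi$, giving $r=|z|/\sin(\theta/2)=\sqrt2\,|z|/\sqrt{1-\cos\theta}$; the Jacobian of the angle doubling together with the $\rho^{d-2}$ surface element and $r^{-d+2}$ produces exactly $\bigl((1-\sigma\cdot e)/2\bigr)^{-d/2}$. With the assignment as you wrote it, $\int\bigl(K(v,v')-K(v',v)\bigr)\dd v'$ would equal $-f\ast\mathcal R$, forcing $c_c<0$ and contradicting the conclusion. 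Your homogeneity argument and the observation that the grazing-angle cancellation makes the $\sigma$-integral converge are both correct once the assignment is fixed.
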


\begin{thm}[Entropy dissipation estimate -- {\cite[Corollary~0.3]{chaker2022entropy}}] \label{t:entropy}
  Let $\gamma \in (-d,2]$ and $s \in (0,1)$. If $f_0$ has finite entropy, then
  any H-solution $f$ of the Boltzmann equation satisfies
  \[ \|f \|_{L^1 ([0,T],L^{p_0}_{k_0})} \le C_0,  \]
  where $p_0$ is such that $\frac{1}{p_0} = 1 - \frac{2s}d$ and $k_0 = \gamma +2s - 2s/d $.
  The constant $C_0$ only depends on mass, energy and entropy of the initial data. 
\end{thm}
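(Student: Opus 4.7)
My plan is to combine the global-in-time bound on the entropy dissipation---the signature of H-solutions---with a weighted fractional Sobolev inequality.

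First, I would extract from the H-solution entropy inequality recalled above the integrated entropy-dissipation control
\[ \int_0^T D(f(t,\cdot))\, dt \le H(\fin) - H(f(T,\cdot)) \le H_0 + C(m_0, M_0, E_0), \]
the last inequality being the standard lower bound on $H(f(t,\cdot))$ in terms of mass and energy.

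Next, the heart of the argument, which is essentially the main result of \cite{chaker2022entropy}, is a weighted coercivity inequality of the form
\[ \big\| \langle v\rangle^{(\gamma+2s)/2}\, \sqrt{f}\, \big\|_{H^s(\R^d)}^2 \lesssim D(f) + C\,(1 + \|f\|_{L^1_k}), \]
for a suitable weight exponent $k=k(\gamma,s)$. Morally, this quantifies the fact that the non-cutoff collision operator behaves like a weighted fractional Laplacian of order $2s$, the weight $\langle v\rangle^{\gamma+2s}$ encoding both the long-range factor $|v-w|^\gamma$ of the kernel and the power of $|v-w|$ produced by the angular singularity of order $2s$.

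Third, I would apply the fractional Sobolev embedding $H^s(\R^d) \hookrightarrow L^{2p_0}(\R^d)$---valid because $\tfrac{1}{2p_0} = \tfrac12 - \tfrac{s}{d}$---to $g = \langle v\rangle^{(\gamma+2s)/2}\sqrt{f}$ and square, obtaining
\[ \|f\|_{L^{p_0}_{\gamma+2s}} = \|g\|_{L^{2p_0}}^2 \lesssim \|g\|_{H^s}^2 \lesssim D(f) + C(1 + \|f\|_{L^1_k}). \]
Integrating in $t \in (0,T)$ and combining with the first step and with the propagation of weighted $L^1$ moments for H-solutions (standard from the energy bound together with the lower bound on the entropy) would give $\|f\|_{L^1([0,T]; L^{p_0}_{\gamma+2s})} \le C_0$. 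Since $k_0 = \gamma+2s - 2s/d \le \gamma+2s$, the stated bound then follows from the trivial continuous embedding $L^{p_0}_{\gamma+2s} \hookrightarrow L^{p_0}_{k_0}$.

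The principal obstacle is the weighted coercivity inequality in the second step. In the very-soft regime where $\gamma$ is close to $-d$, isolating the genuine fractional-diffusion contribution of the dissipation from the error terms produced by the factor $|v-w|^\gamma$ is delicate, since pointwise the kernel degenerates at $v=w$; the argument of \cite{chaker2022entropy} handles this through a Littlewood--Paley-type decomposition in the relative velocity combined with a cancellation against commutator-type remainders whose size is controlled by the macroscopic bounds on mass, energy and entropy.
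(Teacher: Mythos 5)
The paper does not prove this statement; it is imported verbatim from \cite[Corollary~0.3]{chaker2022entropy}, so your proposal is really a sketch of that paper's argument.

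Your high-level strategy (entropy dissipation controls a weighted fractional Sobolev seminorm of $\sqrt f$, then apply a Sobolev embedding and integrate in $t$) is the right one, and the first and last steps are fine. The genuine gap is the coercivity inequality you posit as the core of the argument,
\[
\big\|\langle v\rangle^{(\gamma+2s)/2}\sqrt{f}\big\|_{H^s(\R^d)}^2 \lesssim D(f) + C\big(1+\|f\|_{L^1_k}\big),
\]
with the standard \emph{Euclidean} $H^s$ seminorm. That inequality is not what Chaker--Silvestre prove, and it is not true. The actual estimate (\cite[Proposition~0.2]{chaker2022entropy}, invoked explicitly in this paper's Appendix) is
\[
D(f) \ge c\iint \frac{\big(\sqrt{f}(v')-\sqrt{f}(v)\big)^2}{d_{\mathrm{GS}}(v,v')^{d+2s}}\,\big(\langle v\rangle\langle v'\rangle\big)^{\frac{\gamma+2s+1}{2}}\,\un_{\{d_{\mathrm{GS}}\le 1\}}\dv\dv' - C M_0^2,
\]
with $d_{\mathrm{GS}}(v,v')=\sqrt{|v-v'|^2+\tfrac14(|v|^2-|v'|^2)^2}$. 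Since $d_{\mathrm{GS}} \gg |v-v'|$ in the radial direction for large $|v|$, this is strictly weaker than a Euclidean $H^s$ control; it encodes the genuine degeneracy of the collisional diffusion in the radial direction. The weight $k_0 = \gamma+2s-\tfrac{2s}{d}$ in the theorem is precisely the price of that anisotropy: in polar-type coordinates $u=(|v|,\omega)$, the metric $d_{\mathrm{GS}}$ is roughly $\langle v\rangle\,|u-u'|$, and if you dyadically decompose in $|v|\sim R$ and apply the Sobolev embedding at each scale, tracking the Jacobian $R^{d-1}$, the weight $R^{\gamma+2s+1}$, and the factor $R^{-(d+2s)}$ from the rescaled kernel, you land exactly on $k_0=\gamma+2s-\tfrac{2s}{d}$. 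Your Euclidean version would give weight $\gamma+2s$, i.e.\ a strictly stronger $L^{p_0}$ bound (the direction of your ``trivial embedding'' $L^{p_0}_{\gamma+2s}\hookrightarrow L^{p_0}_{k_0}$ is a red flag: your intermediate claim overshoots the target), and that stronger bound is false. So the sketch proves the wrong inequality; the missing idea is the anisotropic metric $d_{\mathrm{GS}}$ and the associated weighted/anisotropic Sobolev embedding, which is exactly what produces the $-2s/d$ in $k_0$.

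One smaller point: you invoke ``propagation of weighted $L^1$ moments'' to close the error term. Since the statement says $C_0$ depends only on mass, energy and entropy (not on higher moments), the error term in the coercivity estimate must in fact involve only $M_0$ (as in the displayed inequality above, where it is $M_0^2$), so no moment propagation beyond $k=2$ is needed or available.
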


\subsection{Non-degeneracy}

We first recall the existence of a non-degeneracy cone ensuring that, for $v$ fixed, the
kernel $K(v,v')$ is comparable (up to some weight) to the kernel of the fractional Laplacian
for $v'-v$ in a cone. 
\begin{lem}[Non-degeneracy cone -- {\cite[Lemma~7.1]{MR3551261}}] \label{l:cone}
  For every $v \in \R^d$, there exists a symmetric cone $C(v)$\footnote{\textit{i.e.} if $w \in C(v)$ and $\ell \in \R$, then $v + \ell (w-v) \in C(v)$ }
  such that
  \[ K (v,v') \ge \lambda_0 \langle v \rangle^{\gamma+2s+1} |v'-v|^{-d-2s} \quad \text{ for every } v' \in C(v)\]
  and $|C(v) \cap B_1 (v)| \ge \lambda_0 \langle v \rangle^{-1}$.
  The constant $\lambda_0 >0$ only depends on dimension $d$ and hydrodynamical bounds from \eqref{e:hydro}. 
\end{lem}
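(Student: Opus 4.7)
This is a geometric-analytic lower bound on the kernel $K(v,v')$ defined by \eqref{e:kernel}, and I would prove it in three steps.

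\textbf{Step 1 (locating mass).} Using only the hydrodynamic bounds \eqref{e:hydro}, I would first produce constants $R, \delta_0, \eta_0 > 0$ (depending on $m_0, M_0, E_0, H_0$) and a set $A \subset B_R$ with $|A| \ge \eta_0$ and $f \ge \delta_0$ on $A$. The radius $R = (2E_0/m_0)^{1/2}$ comes from Chebyshev applied to $\int f |v|^2 \dv \le E_0$, which gives $\int_{B_R} f \dv \ge m_0/2$. Discarding $\{f < \delta\}$, of mass $\le \delta|B_R|$, and $\{f > M\}$, of mass $\le (H_0+C)/\ln M$ by $f \ln f \ge f \ln M$ on that set, leaves the required $A$ after a suitable choice of $\delta, M$.

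\textbf{Step 2 (choice of the cone).} Assuming $|v| \ge 2R$ (the opposite case admits a fixed-aperture cone of size $\sim 1$), I define the band of directions $\Sigma(v) = \{\omega \in \Sd : |\omega \cdot v| \le R\}$. Minimising $|v+w|$ over $w \perp \omega$ yields $|\omega \cdot v|$, so $\omega \in \Sigma(v)$ is exactly the condition that the affine hyperplane $v + \omega^\perp$ meets $B_R$; its spherical measure is $\gtrsim R/|v| \gtrsim \langle v\rangle^{-1}$. A co-area argument on $A$, possibly followed by discarding a bounded fraction of $\Sigma(v)$, ensures that for every $\omega$ in the retained set, $|A \cap (v+\omega^\perp)|_{d-1} \gtrsim 1$. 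The symmetric cone $C(v) := \{v + t\omega : t \in \R, \omega \in \Sigma(v)\}$ then satisfies $|C(v) \cap B_1(v)| \gtrsim \langle v\rangle^{-1}$ by integrating the aperture in $t \in [-1,1]$.

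\textbf{Step 3 (the kernel estimate).} Given $v' = v + \rho\omega$ with $\omega \in \Sigma(v)$ and $\rho \in (0,1]$, I restrict the integral in \eqref{e:kernel} to $\{w \perp \omega : v + w \in A\}$. On this region $|w| \simeq \langle v\rangle$, hence $r = (\rho^2+|w|^2)^{1/2} \simeq \langle v\rangle$. A direct computation using $w \perp \omega$ gives $\sigma = -(\rho\omega+w)/r$ and $\sin(\theta/2) = \omega \cdot \sigma = -\rho/r$, so \eqref{e:b} yields $b(\cos\theta) \simeq (r/\rho)^{d-1+2s}$. Combining the prefactor $\rho^{-1}$, the weight $r^{\gamma-d+2} \simeq \langle v\rangle^{\gamma-d+2}$, the lower bound $f \ge \delta_0$, and the slice estimate from Step 2 gives
\[ K(v, v+\rho\omega) \gtrsim \langle v\rangle^{\gamma + 2s + 1}\, \rho^{-d-2s}, \]
which is exactly the claim with $|v'-v| = \rho$.

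\textbf{Main obstacle.} The delicate point is Step 2: upgrading the lower bound on $|A \cap (v+\omega^\perp)|_{d-1}$ from an \emph{average} in $\omega$ (which is all a raw co-area inequality gives) to a \emph{pointwise} statement on $\omega \in \Sigma(v)$. I expect this to be handled by a pigeonhole-type pass in which one tolerates shrinking $\Sigma(v)$ by a universal constant. The lemma also implicitly requires enough regularity on $f$ for the hyperplane integral defining $K$ to make pointwise sense; in the conditional-regularity framework where this lemma is applied, $f$ can be assumed continuous and this issue disappears.
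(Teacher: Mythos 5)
Your proposal is essentially correct and reconstructs what the cited reference~\cite[Lemma~7.1]{MR3551261} actually does; note that the paper itself does not reprove this lemma but cites it, pointing only to Lemma~\ref{l:pw-lower-bound} as its essential ingredient --- which is precisely your Step~1. Your Step~3 arithmetic is right: with $v'=v+\rho\omega$, $w\perp\omega$, and $v+w\in A\subset B_{R_0}$, one has $r\simeq\langle v\rangle$, $|\sin(\theta/2)|=\rho/r$, and collecting $\rho^{-1}\cdot r^{\gamma-d+2}\cdot(r/\rho)^{d-1+2s}=r^{\gamma+2s+1}\rho^{-d-2s}$ gives the claimed bound once the slice integral is $\gtrsim1$.

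The one genuine gap is the one you flagged in Step~2: upgrading the co-area lower bound, which is an integral over the band $\Sigma(v)$, to a pointwise slice bound after discarding a controlled fraction of directions. That upgrade is indeed carried out by a Chebyshev/pigeonhole argument of exactly the type you anticipate, and the paper gives a worked template for it in the proof of Lemma~\ref{l:sphere} (the ``dual'' non-degeneracy set through $v'$ rather than $v$). There the average is estimated from below using the slice-integration formula~\cite[Lemma~A.10]{zbMATH07174139}, an $L^\infty$ bound on the slice integral is obtained from the geometry of $A_0\subset B_{R_0}$, and Chebyshev closes the loop: \[ c_k\,\frac{R^d}{\langle v'\rangle}\le C_k\,|S(v')\cap B_R(v')|+\mu_0\,|B_R(v')\cap\{k>0\}|.\] Exactly the same three moves finish your Step~2: the average of $|A\cap(v+\omega^\perp)|_{d-1}$ over $\omega\in\Sigma(v)$ is $\gtrsim1$, the supremum over $\omega$ is $\lesssim R_0^{d-1}$, so a fixed fraction of $\Sigma(v)$ has slice measure $\gtrsim1$, and the retained directions still carry $\gtrsim\langle v\rangle^{-1}$ of spherical measure.

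Two small remarks. First, your final caveat about regularity of $f$ is not needed: $K(v,v')$ in~\eqref{e:kernel} is defined for a.e.\ $v$ by Fubini, and the lemma is only ever used in that sense; no continuity hypothesis enters. Second, in Step~1 you should be slightly careful that the entropy truncation uses $\int_{\{f>M\}}f\ln f\ge(\ln M)\int_{\{f>M\}}f$ together with a lower bound on the positive part of the entropy (the negative part is controlled by the second moment), but this is exactly the content of Lemma~\ref{l:pw-lower-bound} as stated, so you may simply invoke it.
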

\begin{rem}\label{r:tr}
  Remark that we can also ensure that $|C(v) \cap B_3 (v) \setminus B_2 (v)| \ge \lambda_0 \langle v \rangle^{-1}$.
\end{rem}

This lemma is a consequence of the following classical lemma, see \cite{MR3551261}.
\begin{lem}[Pointwise lower bound from hydrodynamical bounds -- {\cite[Lemma~4.6]{MR3551261}}]\label{l:pw-lower-bound}
  Let $f \colon \R^d \to [0,+\infty)$ be measurable and such that \eqref{e:hydro} holds true. 
  There then exist constants $\ell_0\in (0,1)$, $a_0 >0$ and $R_0>0$ only depending on dimension and $m_0,M_0,E_0,H_0$
  such that
  \[ \left|\{ f \ge \ell_0 \} \cap B_{R_0}  \right| \ge a_0 >0.\]
\end{lem}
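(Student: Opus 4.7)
The plan is a standard three-step argument combining all three hydrodynamic bounds: localize the mass in a large ball using the energy, truncate away the very small values of $f$ so the bulk of the mass sits above a definite level $\ell_0$, and then lower-bound the measure of the resulting set via the entropy and Jensen's inequality.

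For the first two steps, Chebyshev applied to $|v|^2$ gives $\int_{\R^d \setminus B_R} f \dv \le E_0/R^2$, so a choice of $R_0$ depending only on $m_0$ and $E_0$ yields $\int_{B_{R_0}} f \dv \ge m_0/2$. One then sets $\ell_0 := \min(1/2,\, m_0/(4|B_{R_0}|)) \in (0,1)$, so that removing the sublevel $\{f < \ell_0\}$ loses at most $\ell_0|B_{R_0}| \le m_0/4$ of mass, and hence the set $A := B_{R_0} \cap \{f \ge \ell_0\}$ carries mass $m := \int_A f \dv \ge m_0/4$.

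To lower-bound $|A|$, one first upgrades $\int f \ln f \dv \le H_0$ to a one-sided bound $\int f (\ln f)_+ \dv \le C_1$ with $C_1$ depending only on $H_0$, $E_0$, $M_0$ and dimension. This is done by splitting $\{f<1\}$ into $\{e^{-|v|^2}<f<1\}$, where $|\ln f|\le |v|^2$ and thus $f|\ln f|\le f|v|^2$ is controlled by the energy, and $\{f \le e^{-|v|^2}\}$, where the elementary inequality $x|\ln x|\le C\sqrt{x}$ for small $x$ gives $f|\ln f|\le Ce^{-|v|^2/2}$, which is integrable. Consequently $\int_A f \ln f \dv \le C_1$, and applying Jensen's inequality to the convex function $x \mapsto x \ln x$ on $A$ gives
\[ \frac{m}{|A|} \ln \frac{m}{|A|} \le \frac{1}{|A|} \int_A f \ln f \dv \le \frac{C_1}{|A|}, \]
equivalently $m \ln(m/|A|) \le C_1$. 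If $m < |A|$ then directly $|A| > m \ge m_0/4$; otherwise $|A| \ge m \exp(-C_1/m) \ge (m_0/4) \exp(-4C_1/m_0)$. Either way $a_0 := (m_0/4) \exp(-4C_1/m_0)$ works.

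The only mildly delicate point is the upgrade from the two-sided bound $\int f \ln f \le H_0$ to the one-sided bound $\int f (\ln f)_+ \le C_1$: a priori $\int f (\ln f)_-$ could be very large on $\{f \ll 1\}$, and it is precisely here that the energy and the upper mass bound are combined with the entropy to yield an estimate that is uniform in the behavior of $f$ near zero.
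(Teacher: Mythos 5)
Your proof is correct, and it is the standard argument: use the second moment to localize a definite fraction of the mass in a ball $B_{R_0}$, choose $\ell_0$ so small that the sublevel set $\{f<\ell_0\}\cap B_{R_0}$ cannot absorb that mass, and then use the entropy (upgraded to a one-sided bound via the energy) together with Jensen applied to $x\mapsto x\ln x$ to rule out concentration on a null set. Note that the paper does not actually prove this lemma but cites it from \cite[Lemma~4.6]{MR3551261}; your argument is essentially the same as the one in that reference, and all steps (the Chebyshev localization, the choice $\ell_0=\min(1/2,m_0/(4|B_{R_0}|))$, the one-sided entropy bound via the split at $f=e^{-|v|^2}$ using $x|\ln x|\le C\sqrt{x}$, and the Jensen estimate with monotonicity of $m\mapsto m e^{-C_1/m}$) are sound.
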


In this work, we need a lower bound on the kernel $K(v,v')$ for $v'$ fixed. This is distinct
from the previous non-degeneracy result. We consider,
\begin{equation}
  \label{e:N}
  S (v') = \{ v \in \R^d : K(v,v') \ge \mu_0 \langle v' \rangle^{\gamma +2s +1} |v'-v|^{-d-2s} \}.
\end{equation}
This set $S(v')$ has positive measure in a neighborhood of the sphere centered at $v'/2$ of radius $|v'|/2$. The neighborhood becomes thiner  as $|v'| \to \infty$.
\begin{lem}[Non-degeneracy set]\label{l:sphere}
There exist two positive constants $\eps_0$ and $\mu_0$, only depending on the hydrodynamical bounds from \eqref{e:hydro} and dimension,  such that for $v' \in \R^d$ and $R>0$ satisfying, $R \le \eps_0 \langle  v' \rangle$, we have $|S(v') \cap B_R (v')| \ge \mu_0 \frac{R^d}{\langle v' \rangle}$
  
\end{lem}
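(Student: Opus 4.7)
The plan is to exhibit an explicit subset of $B_R(v')$ of measure $\gtrsim R^d/\langle v'\rangle$ on which the desired lower bound on $K(v,v')$ can be read off from \eqref{e:kernel}. The underlying geometric fact is Thales's theorem: the hyperplane through $v$ perpendicular to $v'-v$ passes through the origin exactly when $|v - v'/2| = |v'|/2$, so $v$'s in a tubular neighborhood of that sphere produce slices cutting the ``hot zone'' where $f$ is bounded below by Lemma~\ref{l:pw-lower-bound}.

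First I would apply Lemma~\ref{l:pw-lower-bound} to obtain $A \subset B_{R_0}$ with $|A| \ge a_0$ and $f \ge \ell_0$ on $A$, and for each $\omega \in \Sd$ introduce the Radon transform $\Psi(\omega,t) := |A \cap \{x\cdot\omega = t\}|_{d-1}$. Fubini and $A \subset B_{R_0}$ give $\int_{-R_0}^{R_0} \Psi(\omega,t) \dt = |A| \ge a_0$ with the pointwise bound $\Psi \le c_d R_0^{d-1}$. A Chebyshev-type estimate then produces thresholds $L_0, \eta_0 >0$, depending only on the hydrodynamical bounds and $d$, such that for every $\omega \in \Sd$ the set $T_\omega := \{t : \Psi(\omega,t) \ge L_0\}$ satisfies $|T_\omega| \ge \eta_0$.

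Next I would parameterize $v = v' + \rho\omega$ with $\rho \in (0,R)$ and $\omega \in \Sd$. The integration variable $w$ in \eqref{e:kernel} runs over $\omega^\perp$, $y := v+w$ lies in the affine hyperplane $\{y\cdot\omega = v'\cdot\omega + \rho\}$, and $r = \sqrt{\rho^2 + |w|^2}$ equals $|y-v'|$. For $y \in A$ (so $|y| \le R_0$) and $R \le \eps_0 \langle v'\rangle$ with $\eps_0$ small, $r \simeq \langle v'\rangle$ uniformly. Combining \eqref{e:b} with the identity $|\sin(\theta/2)| = \rho/r$ gives $B(r,\sigma) \simeq r^{\gamma+d-1+2s}\rho^{-(d-1)-2s}$, hence
\[
K(v,v') \gtrsim \rho^{-d-2s} \int_{A \cap (v+\omega^\perp)} \ell_0 \, |y-v'|^{\gamma+1+2s} \dd y \gtrsim \ell_0 \, \langle v'\rangle^{\gamma+1+2s}\, |v-v'|^{-d-2s} \, \Psi(\omega, v'\cdot\omega + \rho).
\]
Taking $\mu_0$ a suitable small multiple of $\ell_0 L_0$, the condition $v'\cdot\omega + \rho \in T_\omega$ therefore places $v$ in $S(v')$.

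It remains to bound from below the Lebesgue measure of such good $(\rho,\omega)$. The trick is to restrict to $\omega$'s with $\rho_0 := -v'\cdot\omega \in (2R_0, R-R_0)$, forming a band on $\Sd$ of measure $\simeq R/|v'|$: for such $\omega$, the shift $T_\omega + \rho_0$ sits entirely inside $(0,R)$ without any truncation, so the $\rho$-section has measure $\ge \eta_0$ and is concentrated around $\rho \simeq \rho_0$, contributing the polar Jacobian $\rho^{d-1} \simeq \rho_0^{d-1}$. Disintegrating $\dsigma$ along the transverse coordinate $\tau = \omega\cdot v'/|v'|$ then yields
\[
|S(v') \cap B_R(v')| \gtrsim \eta_0 \, |v'|^{d-1} \int_{2R_0/|v'|}^{(R-R_0)/|v'|} u^{d-1} \dd u \simeq \eta_0 \, \frac{R^d}{\langle v'\rangle},
\]
provided $\eps_0$ is chosen small enough to guarantee $R \ge 4R_0$; the degenerate range $R \lesssim R_0$ can be absorbed into the constants $\mu_0, \eps_0$. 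The principal obstacle is this bookkeeping step: one must ensure the $T_\omega$ slabs are not clipped at the endpoints of $(0,R)$, which is precisely why the band of good $\omega$'s must sit away from the pole and why $\eps_0$ must be taken small.
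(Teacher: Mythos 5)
Your underlying geometric picture -- slicing $A_0 = \{f\ge\ell_0\}\cap B_{R_0}$ by the hyperplanes $v+\omega^\perp$ and controlling the slice measure via a Radon-transform level set $T_\omega$ -- is the right one and mirrors the mechanism the paper exploits. But the construction has a genuine gap when $R$ is small. You restrict to directions $\omega$ with $\rho_0 := -v'\cdot\omega \in (2R_0, R-R_0)$ so that $T_\omega + \rho_0$ lands entirely inside $(0,R)$; that band of $\omega$'s is \emph{empty} whenever $R \le 3R_0$. The lemma, however, must hold for every $R\in(0,\eps_0\langle v'\rangle]$, and this interval always contains arbitrarily small $R$; moreover when $|v'|$ is bounded the constraint $R\le\eps_0\langle v'\rangle$ \emph{forces} $R$ to be small. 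In that range you have no control over where $T_\omega$ sits inside $(-R_0,R_0)$, so the intersection $T_\omega\cap(v'\cdot\omega, v'\cdot\omega+R)$ may be empty for every $\omega$. Your remark that the range $R\lesssim R_0$ ``can be absorbed into the constants'' does not work: shrinking $\eps_0$ only narrows the allowed $R$ further and does not create a non-empty band, and there is no value of $\mu_0$ that repairs a vacuous lower bound.

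The paper sidesteps this by not attempting to locate a band of good directions a priori. It integrates $k(v,v')=\int_{w\perp v'-v}\un_{A_0}(v+w)\,|w|^{\gamma+2s+1}\,\dw$ over concentric spheres $\partial B_\rho(v')$ and uses a coarea-type identity (the cited Lemma~A.10) to rewrite $\int_{\partial B_\rho}k$ as an integral over $A_0-v'$; since $(A_0-v')\cap B_\rho=\emptyset$ once $|v'|\ge R+R_0$, this gives $\int_{\partial B_\rho}k\gtrsim\langle v'\rangle^{\gamma+2s}\rho^{d-1}$ \emph{for every} $\rho\in[0,R]$, including arbitrarily small $\rho$. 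The sublevel-set estimate then follows by Chebyshev together with the observation that $\mathrm{supp}(k)\cap B_R(v')$ (a tubular neighborhood of spheres through $v'$) has measure $\lesssim R^d/|v'|$. If you want to pursue your direct construction, you would need to replace the ``fully-shifted slab'' argument with an estimate that tolerates partial clipping of $T_\omega$ by $(0,R)$, which is precisely where an averaged coarea inequality rather than a pointwise Chebyshev threshold on $\Psi(\omega,\cdot)$ becomes necessary. You should also handle small $|v'|$ separately (the paper removes a small ball around $v'$ from $A_0$ to restore $r\simeq\langle v'\rangle$), but the small-$R$ gap above is the more fundamental obstacle.
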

\begin{proof}
We distinguish the cases $|v'| \ge 4R_0$ and $|v'| \le 4R_0$. The first one is more complicated
since we have to get a lower bound with the proper decay rate for large values of $v'$. 
\bigskip

\noindent \textsc{The case of large values of $|v'|$.}
Let us assume $|v'| \ge 4R_0$. 
  We use \cite[Corollary~4.2]{MR3551261} in order to write for some constant $c_b$ only depending on $b$,
\begin{align*}
  K(v,v') &\ge c_b \left\{ \int_{w \perp v'-v} f(v+w)|w|^{\gamma+2s+1} \dw\right\} |v-v'|^{-d-2s} \\
  & \ge c_b \ell_0 \left\{ \int_{w \perp v'-v} \un_{A_0}(v+w) |w|^{\gamma+2s+1} \dw\right\} |v-v'|^{-d-2s}
\end{align*}
where $A_0 = \{ f \ge \ell_0 \} \cap B_{R_0}$. We know from Lemma~\ref{l:pw-lower-bound} that $|A_0 | \ge a_0 >0$. 
We consider
\[ k(v,v') = \int_{w \perp v'-v} \un_{A_0}(v+w) |w|^{\gamma+2s+1} \dw.\]
Remark that
\[S(v') \supset \{ v \in \R^d : k(v,v') \ge \mu_0 \langle v' \rangle^{\gamma+2s+1} \}.\]
We are thus looking for a lower bound on
\[ \left|\{ v \in B_R (v') : k(v,v') \ge \mu_0 \langle v' \rangle^{\gamma+2s+1} \} \right|.\]

Let $v \in B_R (v')$ and $w$ such that $v+w \in A_0 \subset B_{R_0}$ and $R \le \eps_0 \langle v' \rangle$.
We have
\[ R \le \eps_0 (1+|v'|) \le \eps_0 ((4R_0)^{-1}+1) |v'|.\]
We thus pick $\eps_0>0$ such that $\eps_0 ((4R_0)^{-1}+1) \le 1/4$. With such a choice, we have
\[ R \le |v'|/4 \quad \text{ and } |v'|/2 \ge R + R_0.\]

Writing $w = -v' + (v'-v) + (v+w)$, we have
\begin{equation}
  \label{e:ineq-up}
\begin{cases}  |w|  \le |v'| + R+R_0 \le 2 \langle v' \rangle\\
  |w|  \ge |v'| - R - R_0 \ge |v'|/2 \ge \alpha_0 \langle v' \rangle
\end{cases}
\end{equation}
for some $\alpha_0>0$ only depending on $R_0$ and
\[  \begin{cases}
    |w+v-v'|  \le |v'| + R \le 2 \langle v' \rangle\\
  |w+v-v'|^2-|v-v'|^2  = |w|^2 \ge  \alpha_0^2 \langle v' \rangle^2
\end{cases}
\]
(we used that $w \perp v'-v$).
In particular,
\begin{equation}
  \label{e:ineq-low}
  \begin{cases}
    |w+v-v'|  \le |v'| + R_0 \le 2 \langle v' \rangle\\
  |w+v-v'|-|v-v'|\ge  (\alpha_0^2/3) \langle v' \rangle
\end{cases}
\end{equation}
(we used that $|w+v-v'|+|v-v'|\le 2 \langle v' \rangle + R \le 3 \langle v' \rangle$ to get the second inequality). 
\bigskip

\noindent \textsc{Upper bound for $k$.}
We start with getting an upper bound for $k(v,v')$ in $S(v') \cap B_R (v')$.
Estimates~\eqref{e:ineq-up} imply that
\[ |k(v,v') | \le \alpha_1 \langle v'\rangle^{\gamma+2s+1} \int_{w \perp v'-v} \un_{B_{R_0}} (v+w) \dw \le \alpha_1 \omega_{d-1} R_0^{d-1}
  \langle v'\rangle^{\gamma+2s+1} \]
for some $\alpha_1>0$ only depending on $\alpha_0$ and $\gamma,s$ (depending on the sign of $\gamma+2s+1$). 
We thus proved that
\begin{equation}
  \label{e:k-upper}
  |k(v,v')| \le C_k \langle v'\rangle^{\gamma+2s+1} \text{ in } S(v') \cap B_R (v')  
\end{equation}
with $C_k = \alpha_1 \omega_{d-1} R_0^{d-1}$.
\bigskip

\noindent \textsc{Integration over $B_R (v')$.}
We now aim at estimating the integral of $k$ over the ball centered at $v'$. In order to do so, let $\rho \in [0,R]$ and let us integrate $k$ over $\partial B_\rho$,
\begin{align*}
  \int_{\partial B_\rho} k (v' + \sigma,v') \dsigma & \ge \alpha_2 \langle v'\rangle^{\gamma+2s+1} \int_{\partial B_\rho} \left( \int_{w \perp \sigma} \un_{A_0}(v'+\sigma + w) \dw \right) \dsigma \\
\intertext{for $\alpha_2$ only depending on $\alpha_0$ and $\gamma,s$ (we used \eqref{e:ineq-up}), then we use \cite[Lemma A.10]{zbMATH07174139} and get}
 & \ge \alpha_2 \langle v' \rangle^{\gamma+2s+1} \omega_{d-2} \rho^{d-1} \int_{(A_0 -v') \setminus B_\rho} \frac{(|z|^2 -\rho^2)^{\frac{d-3}2}}{|z|^{d-2}} \dd z. 
\end{align*}
We remark that $(A_0 -v') \cap B_\rho = \emptyset$ for $\rho \le R$. Indeed, for $z \in A_0$,
\[ |z-v'| \ge |v'|-R_0  \ge R \ge \rho.\]
For $z \in (A_0-v') \setminus B_\rho$, \eqref{e:ineq-low} yield $|z| \le  2 \langle v' \rangle$ and $|z| - \rho \ge \alpha_0 \langle v' \rangle$; the latter inequality implies $|z|^2  -\rho^2 \ge \alpha_0^2 \langle v' \rangle^2$. Hence,
\[ \frac{(|z|^2 -\rho^2)^{\frac{d-3}2}}{|z|^{d-2}} \ge \frac{\alpha_0^{d-3} \langle v' \rangle^{d-3}}{2^{d-2}\langle v' \rangle^{d-2}} = \frac12 (\alpha_0/2)^{d-3} \langle v'\rangle^{-1}. \]
This implies that
\[
  \int_{\partial B_\rho} k (v,v') \dsigma  \ge (\alpha_2/2) (\alpha_0/2)^{d-3} \omega_{d-2} a_0 \langle v' \rangle^{\gamma+2s+1}  \langle v'\rangle^{-1} = \alpha_3 \langle v'\rangle^{\gamma+2s+1}  \langle v' \rangle^{-1} 
\]
with $\alpha_3= (\alpha_2/2) (\alpha_0/2)^{d-3} a_0 \omega_{d-2} $. We used that $|A_0| \ge a_0$. 
Integrating over $\rho \in [0,R]$, we get for $c_k= \frac1d \alpha_3$, 
\begin{equation}
  \label{e:k-lower}
  \int_{B_R (v')} k (v,v') \dsigma \ge c_k \langle v'\rangle^{\gamma+2s+1} \frac{R^d}{\langle v'\rangle}.
\end{equation}
\bigskip

\noindent \textsc{Estimate of the sub-level set.}
We can now get a lower bound on a sub-level set of $k$ as follows. We use the upper bound of $k$ in $S(v') \cap B_R (v')$ and the lower bound on the integral
over $B_R (v')$ in order to write,
\begin{align*}
  c_k \langle v'\rangle^{\gamma+2s+1}  \frac{R^d}{\langle v'\rangle} & \le \int_{B_R (v')} k(v,v') \dv \\
  & \le C_k \langle v'\rangle^{\gamma+2s+1} | S(v') \cap B_R (v')| + \mu_0 \langle v'\rangle^{\gamma+2s+1} |B_R (v') \cap \{ k >0 \}|.
\end{align*}
We now remark that $k$ is supported in the union of spheres corresponding to diameters $[z_0,v']$ with $z_0 \in A_0$.
In particular, it is contained in the union of such spheres corresponding to $z_0 \in B_{R_0}$. The measure of the
intersection of this larger set with $B_R(v')$ is bounded from above by $R^d/|v'|$, up to some constant $C_0$ only depending
on the radius $R_0$ and dimension $d$.
\begin{figure}
     \setlength{\unitlength}{1cm}
\begin{picture}(10,5)
\put(2,0){\includegraphics[height=5cm]{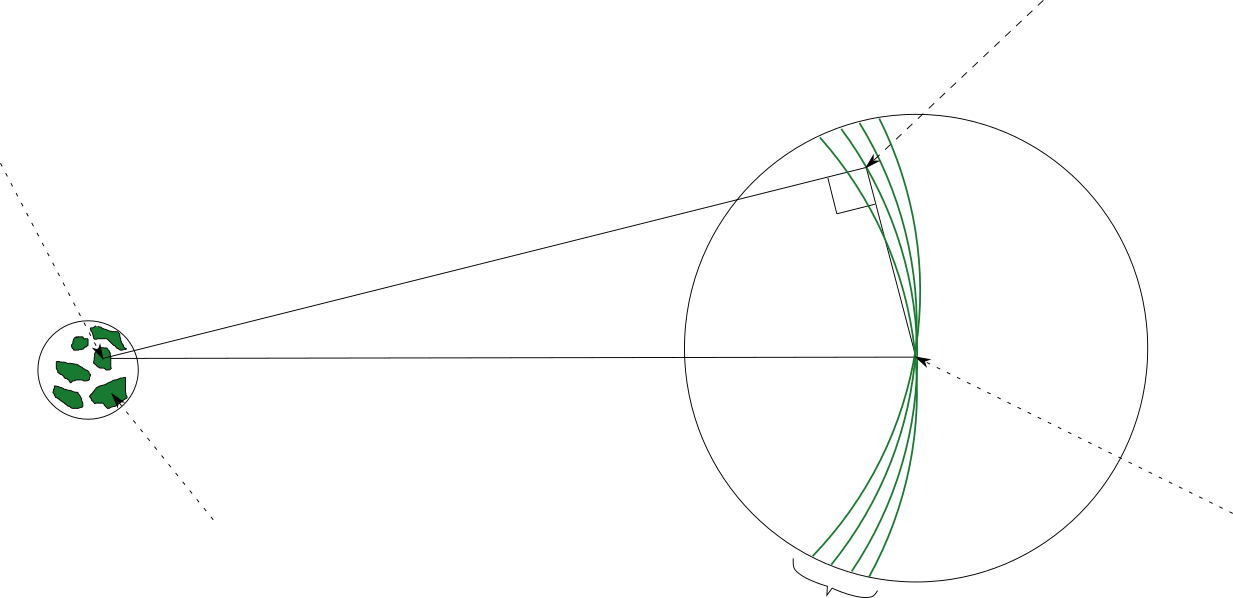}}
       \put(12.5,0.5){$v'$}
       \put(10.9,5){$v$}
       \put(1,3.9){$v+w$}
       \put(4,0.3){$A_0 = \{ f \ge \ell_0\} \cap B_{R_0}$}
       \put(9,-0.3){$\simeq \langle v' \rangle^{-1}$}
     \end{picture}
\caption{The non-degeneracy set $S(v')$ is made of hyperspheres  of diameter $[z_0,v']$ with $z_0 \in A_0$.}
\end{figure}
We finally get,
\[
  c_k  \frac{R^d}{\langle v'\rangle}  \le C_k | S(v') \cap B_R (v')| + \mu_0  C_0 \frac{R^d}{\langle v' \rangle}.
\]
We conclude by choosing $\mu_0$ such that $c_k - \mu_0 C_0 \ge \mu_0 C_k$, for instance $\mu_0 = c_k/(C_0+C_k)$.
\bigskip

\noindent \textsc{The case of small values of $|v'|$.}
We now explain how to treat the case $|v'| \le 4R_0$. Since $R \le \eps_0 \langle v' \rangle$ (for some $\eps_0$ to be chosen),
we see that we only need to get a lower bound on $|S(v') \cap B_R (v')|$ independent of  $v'$. In view of the treatment of the case $|v_0| \ge 4 R_0$,
we need estimates similar to \eqref{e:ineq-up} and \eqref{e:ineq-low}. It is clear that we have the upper bounds for $|w|$ and $|w+v-v'|$. As far as lower
bounds are concerned, we only need a lower bound on $|w|$. Writing $w= -v'+ (v'-v) + (v+w)$, we see that
\[|w| \ge d(A_0, B_R (v')). \]
We consider $\iota_0>0$ such that $|B_{\iota_0}| = \mu_0/2$. In particular, the set $A_0^\ast = A_0 \setminus B_{\iota_0}(v')$ satisfies
\[|A_0^\ast| \ge \mu_0/2\]
and for $R \le \iota_0/2$, we have
\[ d(A_0^\ast, B_R (v')) \ge \iota_0/2.\]
 \begin{figure}[h]
     \setlength{\unitlength}{1cm}
     \begin{picture}(8,5)
\put(4.5,0.5){\includegraphics[height=4cm]{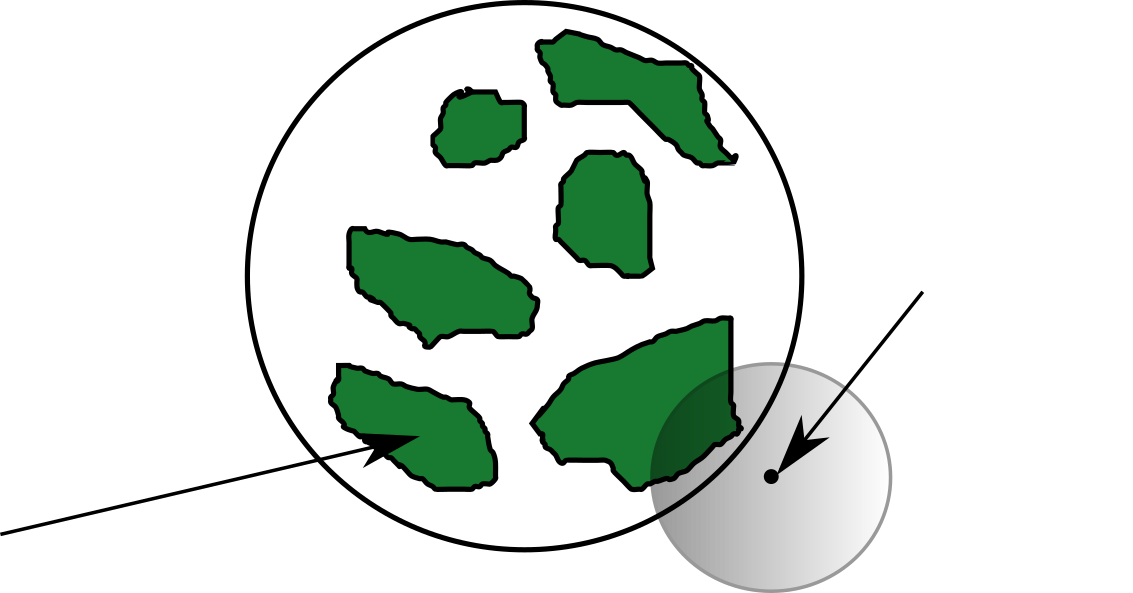}}
       \put(10.8,2.5){$v'$}
       \put(1.2,0.7){$A_0 = \{ f \ge \ell_0\} \cap B_{R_0}$}
      \end{picture}
      \caption{The set $A_0$ where $f$ is bounded from below is represented in green.
        The grey ball is centered at $v'$ and its diameter is $\iota_0$.}
 \end{figure}
Hence, for $v+w \in A_0^\ast$ and $v \in B_R (v')$ and $R \le \iota_0/2$, we have
\[ |w| \ge \iota_0/2.\]
To ensure that $R \le \iota_0$, we pick $\eps_0 \le \frac{\iota_0}{2(1+4 R_0)}$ such that
\[ R \le \eps_0 (1+|v'|) \le \eps_0 (1+4R_0) \le \iota_0/2.\]
Now we can adapt the argument for large values of $|v'|$ by replacing $A_0$ with $A_0^\ast$. Remark that the geometric argument in the last step of the previous case to justify that the support of $k$ in $B_R (v')$ is not too large is not needed for small values of $|v'|$. 
\bigskip

\noindent \textsc{Conclusion.} We choose $\eps_0 = \min \left(\frac{R_0}{1+4R_0}, \frac{\iota_0}{2(1+4 R_0)} \right)$ and
get $|S(v') \cap B_R (v') | \ge \mu_0 R^d \langle v' \rangle^{-1}$ for $R \le \eps_0 \langle v' \rangle$ in all cases.
\end{proof}

\subsection{Propagation of moments}

\begin{thm}[Propagation of moments -- {\cite[Theorem~1 (I)]{MR2546739}}]\label{t:moments}
  Let $f(t=0) = f_0$ be such that $\int_{\R^d} f_0(v) |v|^k \dv < +\infty$ for some $k >0$.
  Let $f \colon (0,+\infty) \times \R^d \to [0,+\infty)$ be an H-solution of the Boltzmann equation
  with $B(z,\cos \theta) = \rho (z) b (\cos \theta)$ such that there exist two constants
  $A^*$ and $A_*$ and $\gamma \in [-4,0)$ such that for all $z \in \R^d$, 
  \[ A_* (1+|z|^2)^{\gamma/2} \le \int_{\Sd} \rho (|z|) b(\cos \theta) \sin^2 (\theta) \dsigma \le A^* |z|^\gamma.\]

  Then for all time $t>0$,
  \[ \int_{\R^d} f(t,v) |v|^k \dv \le \Cprop{k} \]
  where $C_k$ only depends on dimension, $\gamma$, $s$, $\int_{\R^d} f_0(v) |v|^k \dv$ and $H_0 = \int_{\R^d} \fin \ln \fin (v) \dv$. 
\end{thm}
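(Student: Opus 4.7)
Set $M_k(t) = \int_{\R^d} f(t,v) |v|^k \dv$. The plan is to derive a closed differential inequality on $M_k$ that rules out blow-up. First I would use the symmetric weak formulation of the collision operator with test function $\psi(v) = |v|^k$,
\[
  \frac{d}{dt} M_k(t) = \frac{1}{4} \iint_{\R^d \times \R^d} f(v) f(w) \int_{\Sd} \bigl(\psi(v') + \psi(w') - \psi(v) - \psi(w)\bigr) B(v-w,\sigma) \dsigma \dv \dw.
\]
The angular integral is controlled by a Povzner-type estimate: expanding the post-collisional energies in $\theta$, one obtains the pointwise bound
\[
  \int_{\Sd} \bigl(|v'|^k + |w'|^k - |v|^k - |w|^k\bigr) b(\cos\theta) \sin^2\theta \dsigma \le -\beta_k (|v|^2 + |w|^2)^{k/2} + \gamma_k \bigl(|v|^{k-2}|w|^2 + |v|^2 |w|^{k-2}\bigr)
\]
for constants $\beta_k, \gamma_k > 0$ depending only on $k$ and $b$. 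The $\sin^2\theta$ weight plays two roles: it absorbs the grazing singularity of $b$ and it quantifies the angular cancellation. The hypothesis of the theorem is tailored precisely so that, after pairing with $\rho(|v-w|)$, one obtains a quantity $\simeq \langle v-w \rangle^\gamma$ from above and below.

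Combining the two estimates yields
\[
  \frac{d}{dt} M_k(t) \le -c_k \iint f(v) f(w) (|v|^2 + |w|^2)^{k/2} \langle v-w \rangle^\gamma \dv \dw + C_k \iint f(v) f(w) \bigl(|v|^{k-2}|w|^2 + |v|^2 |w|^{k-2}\bigr) |v-w|^\gamma \dv \dw.
\]
The main obstacle is the strong singularity of $|v-w|^\gamma$ for $\gamma$ down to $-4$. I would split the cross term across $\{|v-w| \le 1\}$ and its complement. On the far region $|v-w|^\gamma \le 1$, reducing the integral to a product of moments of orders strictly less than $k$. On the near-diagonal region I would use the entropy bound: by Dunford--Pettis, finite $\int f \ln f$ together with finite energy give equi-integrability of $f$ in $L^1$, and an elementary Young-type argument then shows that $\|f \ast |\cdot|^\gamma\|_{L^\infty}$ is finite whenever $\gamma > -d$, with a bound depending only on $m_0, M_0, E_0, H_0$ (and in particular independent of time). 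Consequently $\iint f(v) f(w) g(v) |v-w|^\gamma \dv \dw \le C \int f(v) g(v) \dv$ for every nonnegative $g$.

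Applying this to the cross term produces a control of the form $C_k (1 + M_{k-\alpha})$ for some $\alpha > 0$. Log-convex interpolation between $M_0$ (bounded) and $M_k$ gives $M_{k-\alpha} \le C M_k^\theta$ with $\theta < 1$. Dropping the non-positive dissipation term one is left with $M_k'(t) \le C(1 + M_k(t))^\theta$, which by a standard ODE comparison yields $M_k(t) \le \Cprop{k}$ uniformly in $t > 0$. The hardest quantitative step is the Povzner estimate: producing a \emph{uniformly positive} $\beta_k$ in the non cut-off setting requires a careful pointwise analysis that isolates the non-grazing directions where the angular gain is non-degenerate, classical since Povzner's original work but technically delicate because most of the mass of $b$ concentrates near $\theta = 0$, which is exactly where no cancellation is available.
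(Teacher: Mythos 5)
The paper does not prove this theorem; it is cited from \cite[Theorem~1~(I)]{MR2546739}, so there is no in-text argument to compare against. Your proposal, however, has two genuine gaps.

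First, the claim that Dunford--Pettis equi-integrability from $\int f\ln f \le H_0$ together with a Young-type estimate yields a time-uniform bound on $\|f \ast_v |\cdot|^\gamma\|_{L^\infty}$ is false for $\gamma < 0$. The weak Young inequality would require $f \in L^{d/(d+\gamma)}$, an exponent strictly larger than $1$ when $\gamma < 0$; but the hydrodynamic bounds \eqref{e:hydro} allow densities as singular as $f(v) \simeq |v - v_0|^{-\alpha}$ near a point $v_0$ for any $\alpha < d$, and then $(f \ast_v |\cdot|^\gamma)(v_0) = +\infty$ whenever $\alpha > d + \gamma$. Equi-integrability controls how much mass sits on a set of given small measure, not how concentrated that mass can be, so it does not rule out this pointwise blow-up. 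This obstruction is exactly why the paper uses the time-integrated estimate of Theorem~\ref{t:entropy} rather than any pointwise-in-time $L^\infty$ control of $f \ast_v |\cdot|^\gamma$.

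Second, your concluding ODE step does not deliver what the statement claims. The inequality $M_k'(t) \le C\bigl(1 + M_k(t)\bigr)^\theta$ with $\theta < 1$ has solutions growing like $(1+t)^{1/(1-\theta)}$, which is unbounded; but the theorem asserts a constant $\Cprop{k}$ that is independent of $t$. To obtain a time-uniform bound you cannot drop the negative Povzner contribution: it must be interpolated against $M_k$ itself so that large $M_k$ forces $M_k' < 0$, closing a damped Riccati-type ODE. Your decomposition discards precisely the term that produces this damping, so even granting all the estimates you would only recover polynomial-in-time growth of $M_k$, not uniform boundedness.
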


\subsection{Functional inequalities}

We start with an estimate of an integral involving the convolution with $|\cdot|^\gamma$ that naturally appears
after applying the cancellation lemma. 
\begin{lem}[Convolution product] \label{l:convol}
  Let $p \in [1, d/(d+\gamma))$. Then 
  \[\int_{\R^d} g (f \ast_v |\cdot|^\gamma) \dv \le C_{d,\gamma} \| g \|_{L^{q_1/p}} \|f\|_{L^{q_1}}\]
  with $q_1 \in (p,p+1)$ such that $\frac{p+1}{q_1} = 2 + \frac{\gamma}d$. The constant $C_{d,\gamma}$ only depends on dimension and $\gamma$. 
\end{lem}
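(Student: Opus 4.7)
The plan is to recognise the left-hand side as a bilinear form against the Riesz-type kernel $|v-w|^\gamma$ and apply the Hardy–Littlewood–Sobolev (HLS) inequality directly. Writing
\[
\int_{\R^d} g(v) \, (f \ast_v |\cdot|^\gamma)(v) \dv = \iint_{\R^d \times \R^d} g(v) \, |v-w|^\gamma \, f(w) \dv \dw,
\]
we are in the classical HLS setting: since the interval $[1, d/(d+\gamma))$ is non-empty only when $d+\gamma>0$, and our global assumption $\gamma>-d$ combined with $\gamma<0$ (implicit in the range of $p$) gives $|\gamma| \in (0,d)$. The HLS inequality then asserts
\[
\iint g(v) \, |v-w|^{-|\gamma|} f(w) \dv \dw \le C_{d,\gamma} \, \|g\|_{L^r} \, \|f\|_{L^s}
\]
whenever $r,s \in (1,\infty)$ satisfy the balance condition $\tfrac{1}{r} + \tfrac{1}{s} = 2 - \tfrac{|\gamma|}{d} = 2 + \tfrac{\gamma}{d}$.

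I would then choose $r = q_1/p$ and $s = q_1$, so that $\tfrac{1}{r} + \tfrac{1}{s} = \tfrac{p+1}{q_1}$, which matches the defining relation for $q_1$ exactly. The only remaining work is to check the endpoint conditions. The bound $\gamma > -d$ yields $2 + \gamma/d > 1$, hence $q_1 < p+1 < \infty$, so $s$ is finite. The assumption $p < d/(d+\gamma)$ rearranges to $p(1 + \gamma/d) < 1$, which is equivalent (after multiplying by $q_1>0$ and rearranging) to $q_1 > p$; this gives $s = q_1 > p \ge 1$ and $r = q_1/p > 1$. Both exponents thus lie in $(1,\infty)$, and HLS applies.

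There is no substantial obstacle here—the proof is entirely a matter of matching the HLS balance relation with the definition of $q_1$ and verifying the admissibility range. The only subtlety worth flagging explicitly is that the upper endpoint $p < d/(d+\gamma)$ in the hypothesis corresponds precisely to $q_1 > p$, i.e. to $r > 1$, so this apparently ad hoc restriction is exactly what keeps us inside the range of validity of HLS.
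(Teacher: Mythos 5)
Your proof is correct and takes essentially the same route as the paper: the paper applies H\"older's inequality to split off $\|g\|_{L^{q_1/p}}$ and then the weak Young (Young--O'Neil) inequality to the convolution $f\ast|\cdot|^\gamma$, which is precisely the standard proof of the bilinear Hardy--Littlewood--Sobolev inequality that you invoke as a black box; the exponent bookkeeping (the balance relation $\tfrac{p}{q_1}+\tfrac1{q_1}=2+\tfrac\gamma d$ and the endpoint check that $p<\tfrac{d}{d+\gamma}$ forces $q_1>p$, hence $r,s\in(1,\infty)$) is identical. The paper's Remark~\ref{r:weak-young} identifying $C_{d,\gamma}$ with $\||\cdot|^\gamma\|_{L^{-d/\gamma,\infty}}$ confirms that the underlying mechanism is the weak-type bound you are tacitly using inside HLS.
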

\begin{rem} \label{r:weak-young}
  The constant $C_{d,\gamma}$ equals $\| |\cdot|^\gamma \|_{L^{-d/\gamma,\infty}}$. 
\end{rem}
\begin{proof}
  It is a consequence of H\"older's inequality and the weak Young's inequality. Indeed, for $p_1>1$ such that
  $\frac1{p_1} + \frac{p}{q_1} = 1$, we have,
  \begin{align*}
    \int_{\R^d} g (f \ast_v |\cdot|^\gamma) \dv & \le \| g\|_{L^{q_1/p}} \| f \ast_v |\cdot|^\gamma \|_{L^{p_1}} \\
    & \le \| g\|_{L^{q_1/p}} \| f \|_{L^{q_1}} \| |\cdot|^\gamma \|_{L^{r,\infty}}
  \end{align*}
  with $\frac1{q_1} + \frac1r = 1 + \frac1{p_1}$ and $r = -d/\gamma$. 
\end{proof}

We continue with an interpolation estimate. 
\begin{lem}[An interpolation estimate] \label{l:interpol}
  Let $p \in \left(\frac{d}{d+\gamma+2s},\frac{d}{d+\gamma} \right)$ 
  and  $q_1 \in (p,p+1)$ such that $\frac{p+1}{q_1} = 2 + \frac{\gamma}d$ and $\frac1{p_0} = 1 - \frac{2s}d$

  Then $p < q_1 < p p_0=q$ and for all $\eps>0$, there exists $C_\eps >0$ such that
  \[ \| f\|_{L^{q_1}}^{p+1} \le \eps \| f^p\|_{L^{p_0}}    + C_\eps  \|f \|_{L^p}^{p \beta}. \]
  The constant $\beta$ is given by the following formula,
  \[
    \beta = 1 + \frac{2s}{p(d+\gamma+2s)-d} . 
  \]
\end{lem}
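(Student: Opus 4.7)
My plan is to obtain the inequality by a standard three-parameter $L^p$ interpolation followed by Young's inequality, with all the bookkeeping of exponents forced by the definitions.

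First I would verify the announced ordering $p < q_1 < p p_0$. The identities $q_1 = d(p+1)/(2d+\gamma)$ (from $\tfrac{p+1}{q_1} = 2 + \tfrac{\gamma}{d}$) and $p p_0 = pd/(d-2s)$ reduce both inequalities to the two hypotheses $p < d/(d+\gamma)$ and $p > d/(d+\gamma+2s)$; the second in fact only needs $p > (d-2s)/(d+\gamma+2s)$, so there is some slack. Setting $q = p p_0$, one can then write $1/q_1$ as a convex combination
\[ \frac{1}{q_1} = \frac{\theta}{p} + \frac{1-\theta}{q}, \qquad \theta \in (0,1), \]
and log-convexity of $L^r$ norms gives the interpolation
\[ \|f\|_{L^{q_1}} \le \|f\|_{L^p}^{\theta}\,\|f\|_{L^{q}}^{1-\theta}. \]
Using $\|f\|_{L^q} = \|f^p\|_{L^{p_0}}^{1/p}$ (since $q = p p_0$) and raising to the power $p+1$ yields
\[ \|f\|_{L^{q_1}}^{p+1} \le \|f\|_{L^p}^{\theta(p+1)}\;\|f^p\|_{L^{p_0}}^{(1-\theta)(p+1)/p}. \]

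Next I would apply Young's inequality $AB \le \eps A^\alpha + C_\eps B^{\alpha'}$ with $\alpha$ chosen so that $\alpha \cdot (1-\theta)(p+1)/p = 1$, i.e.\ $\alpha = p/((1-\theta)(p+1))$, so that the first term becomes $\eps \|f^p\|_{L^{p_0}}$. The conjugate exponent satisfies $\alpha/(\alpha-1) = p/(\theta(p+1)-1)$, so the second term is $C_\eps \|f\|_{L^p}^{p\beta}$ with
\[ \beta = \frac{\theta(p+1)}{\theta(p+1) - 1} = 1 + \frac{1}{\theta(p+1)-1}. \]

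The only remaining work is to identify the exponent $\beta$ with the one in the statement. Multiplying the convex-combination identity by $p$ gives $p/q_1 = 1 - (1-\theta)\cdot 2s/d$, hence $1-\theta = (d/2s)(q_1-p)/q_1$. Plugging in $q_1 = d(p+1)/(2d+\gamma)$ and simplifying,
\[ \theta(p+1) - 1 = \frac{p(d+\gamma+2s) - d}{2s}, \]
which is strictly positive exactly under the hypothesis $p > d/(d+\gamma+2s)$ (confirming that we are in the allowed range), and gives the announced
\[ \beta = 1 + \frac{2s}{p(d+\gamma+2s)-d}. \]
I do not foresee any genuine difficulty; the only thing to be careful about is keeping the three normalizations $\tfrac{p+1}{q_1} = 2 + \tfrac{\gamma}{d}$, $\tfrac{1}{p_0} = 1 - \tfrac{2s}{d}$, and $q = p p_0$ straight while computing $\theta$.
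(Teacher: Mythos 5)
Your proof is correct and follows essentially the same route as the paper: split $1/q_1$ as a convex combination of $1/p$ and $1/(pp_0)$, apply log-convexity of $L^r$ norms, then Young's inequality with the exponent tuned so that the $\|f^p\|_{L^{p_0}}$ factor comes out to the first power, and finally identify the remaining exponent with $\beta$ via the constraint $\frac{p+1}{q_1} = 2 + \frac{\gamma}{d}$. The only cosmetic difference is notation ($\theta$ vs.\ the paper's $\alpha$), and you verify the ordering $p<q_1<pp_0$ a bit more explicitly than the paper does.
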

\begin{proof}
We start with H\"older's inequality with $\frac1{q_1} = \frac{\alpha}p + \frac{1-\alpha}{p p_0}$,
\[
  \|f\|_{L^{q_1}}^{p+1}  \le \|f\|_{L^p}^{\alpha(p+1)} \|f\|_{L^{pp_0}}^{(1-\alpha)(p+1)}.
\]
Then we apply H\"older inequality for products: $ab \le \frac{1}{P} a^P + \frac1{Q}b^Q$ as soon as $\frac1P +\frac1Q =1$.
We apply it with $P= \frac{p}{(1-\alpha)(p+1)}$.

We have to check that $P>1$.  
We first remark that
\[ (1-\alpha) = \frac{1-\frac{p}{q_1}}{1-\frac1{p_0}} = \frac{d}{2s} \left(1 -\frac{p}{q_1} \right).\]
Then
\begin{align*}
  \frac1P & = (1-\alpha) \left(1+\frac1p \right) \\
          & = \frac{d}{2s} \left(1 -\frac{p}{q_1} \right)\left(1+\frac1p \right)\\
          & = \frac{d}{2s} \left( 1 +\frac1p -\frac{p+1}{q_1} \right)\\
  &= \frac{d}{2s} \left( \frac1p -1 - \frac{\gamma}d \right).
\end{align*}
Since $\frac1p < 1 + \frac{\gamma+2s}{d}$, we get $\frac1P<1$. 

We then apply $ab \le \frac{1}{P} a^P + \frac1{Q}b^Q$ with $b=\eps^{-1}\|f\|_{L^p}^{\alpha(p+1)}$ and $a=\eps \|f\|_{L^{pp_0}}^{(1-\alpha)(p+1)}$ and get
\begin{equation}\label{e:no-weight}
\|f\|_{L^{q_1}}^{p+1} \le \frac{\eps^{-Q}}{Q} \|f\|_{L^p}^{\alpha(p+1)Q} + \frac{\eps^p} P \|f\|_{L^{pp_0}}^p.
\end{equation}

We now compute $\alpha (p+1)Q$. In order to do so, we compute
\[ \frac1Q = 1 -\frac1P = 1-(1-\alpha)\left(1+\frac1p \right) = \frac{\alpha (p+1)-1}{p}.\]
Hence,
\[ \alpha (p+1) Q =  \beta p\]
with
\[ \beta = \frac{\alpha(p+1)}{\alpha (p+1)-1} = 1 + \frac1{\alpha (p+1)-1}.\]
We now express $\beta$ in terms of $p,d,\gamma,s$ starting from $\frac1{q_1} = \frac\alpha{p}+\frac{1-\alpha}{pp_0}$.
Recalling that $\frac{p+1}{q_1} = 2 + \frac{\gamma}d$, we get
\begin{align*} 2 + \frac{\gamma}d &= \frac{\alpha(p+1)}p + \frac{(1-\alpha)(p+1)}{pp_0} \\
                                  &= \frac{\alpha(p+1)}p \left(1-\frac1{p_0} \right) + \frac{p+1}{p} \frac1{p_0} \\
                                  &= \frac{\alpha(p+1)}p \frac{2s}d + \frac{p+1}{p} \left(1-\frac{2s}d \right).
\end{align*}
From this equality, we get
\begin{align*}
  \alpha (p+1) &= \frac{2d+\gamma}{2s} p - \frac{d-2s}{2s}  (p+1) \\
& = 1 + \frac{p(d+\gamma+2s) -d}{2s}.
\end{align*}

We rewrite \eqref{e:no-weight} as
\[ \|f\|_{L^{q_1}}^{p+1} \le C_{\bar \eps} \|f\|_{L^p}^{\beta p} + \bar \eps \|f^p\|_{L^{p_0}}\]
where $\bar \eps = \eps /P$ and $C_{\bar \eps}$ is a large constant.
\end{proof}

The previous interpolation estimate is not enough for our needs, weights in $v$ are to be considered.
In order to get such a generalization, we will use H\"older's inequality with weights.
\begin{lem}[H\"older's inequality with weights]\label{l:holder-weight}
  Let $p,q,r \in (1,+\infty)$ and $p < r < q$. Then,
  \[ \| f  \|_{L^r_{k_r}} \le \| f \|_{L^p_{k_p}}^\alpha
    \| f  \|_{L^q_{k_q}}^{1-\alpha} \]
  with $\alpha \in (0,1)$ such that
  \( \frac1{r} = \frac{\alpha}p + \frac{1-\alpha}q\)  and \( k_r = \alpha k_p + (1-\alpha)k_q.\)
\end{lem}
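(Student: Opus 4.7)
The plan is to reduce this to the classical Hölder inequality (without weights) by absorbing the weights into the function. The key observation is that the hypothesis $k_r = \alpha k_p + (1-\alpha) k_q$ together with $\frac{1}{r} = \frac{\alpha}{p} + \frac{1-\alpha}{q}$ makes the weight $\langle v \rangle^{k_r}$ factor multiplicatively in a way compatible with the exponent decomposition.

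Concretely, I would first rewrite the integrand pointwise as
\[ (f(v) \langle v \rangle^{k_r})^r = (f(v)\langle v\rangle^{k_p})^{\alpha r}\, (f(v) \langle v\rangle^{k_q})^{(1-\alpha) r}, \]
which is exactly the content of $k_r = \alpha k_p + (1-\alpha)k_q$. Then I would apply the classical Hölder inequality to the product on the right-hand side with conjugate exponents $P = \frac{p}{\alpha r}$ and $Q = \frac{q}{(1-\alpha) r}$. The relation $\frac{1}{r} = \frac{\alpha}{p} + \frac{1-\alpha}{q}$ is precisely $\frac{1}{P} + \frac{1}{Q} = 1$, and the assumption $p < r < q$ together with $\alpha \in (0,1)$ guarantees $P, Q > 1$ so that Hölder's inequality is applicable.

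Carrying this out yields
\[ \|f\|_{L^r_{k_r}}^r = \int_{\R^d} (f\langle v\rangle^{k_p})^{\alpha r}(f \langle v \rangle^{k_q})^{(1-\alpha)r} \dv \le \|f\|_{L^p_{k_p}}^{\alpha r} \, \|f\|_{L^q_{k_q}}^{(1-\alpha) r}, \]
and extracting the $r$-th root gives the claimed inequality. I do not expect any genuine obstacle: the whole content of the lemma is the bookkeeping of the two affine relations on $(\alpha, k_r, r)$, which match exactly once one decides to split the weight as a geometric mean. The only minor point worth checking is that the pair $(P,Q)$ defined above consists of admissible Hölder exponents, but this follows immediately from the hypotheses on $p, q, r$.
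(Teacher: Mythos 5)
Your proof is correct and is essentially the same as the paper's: the paper applies $\|FG\|_{L^r} \le \|F\|_{L^{p/\alpha}} \|G\|_{L^{q/(1-\alpha)}}$ to $F = f^\alpha \langle v\rangle^{\alpha k_p}$ and $G = f^{1-\alpha}\langle v\rangle^{(1-\alpha)k_q}$, which is precisely your decomposition after raising to the $r$-th power and using the classical two-exponent H\"older inequality. The verification that $P = p/(\alpha r)$ and $Q = q/((1-\alpha)r)$ are genuine conjugate exponents is as you say immediate from the affine relation on $1/r$.
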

\begin{proof}
  It is enough to apply $\|FG\|_{L^r} \le \|F\|_{L^{\frac{p}\alpha}} \|G\|_{L^{\frac{q}{1-\alpha}}}$ to $F=f^\alpha \langle v \rangle^{\alpha k_p}$
  and $G = f^{1-\alpha} \langle v \rangle^{(1-\alpha)k_q}$. 
\end{proof}
We can now state and prove our interpolation estimate with weights. 
\begin{lem}[An interpolation estimate with weights] \label{l:interpol-weight}
  Let $p \in \left(\frac{d}{d+\gamma+2s},\frac{d}{d+\gamma} \right)$ 
  and  $q_1 \in (p,p+1)$ such that $\frac{p+1}{q_1} = 2 + \frac{\gamma}d$.

  Then $p < q_1 < p p_0=q$ and for  $\eps>0$ small enough and $k_0<0$, there exists $k_1>0$ and $C_\eps >0$ such that
  \[ \| f\|_{L^{q_1}}^{p+1} \le \eps \| f^p\|_{L^{p_0}_{k_0}}
    + C_\eps \left( \|f \|_{L^p}^{p \beta_\eps} +  \|f \|^{r_\eps}_{L^1_{k_\eps}} \right) \]
  for some $r_\eps> 1$ and $k_\eps >0$ (large). The constant $\beta_\eps$ satisfies $\beta_\eps \to \beta$ as $\eps \to 0$ with $\beta>1$  given by the following formula,
  \begin{equation} \label{e:beta}
    \beta = 1 + \frac{2s}{p(d+\gamma+2s)-d} . 
  \end{equation}
  The weight exponent $k_1$ depends on $k_0$, $p,d,s,\gamma$ and $\eps$. 
\end{lem}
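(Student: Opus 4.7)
The idea is to upgrade the unweighted interpolation of Lemma~\ref{l:interpol} by adding a third endpoint $L^1_{k_\eps}$ with a large positive weight, chosen precisely to cancel the contribution of the negative weight $k_0/p$ on the $L^{pp_0}$ endpoint. The role of $\eta$ (a small parameter quantifying the perturbation away from the two-point interpolation) is to generate, after Young's inequality, both the $\eps$ in front of $\|f^p\|_{L^{p_0}_{k_0}}$ and the slight degradation $\beta \to \beta_\eps$.

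\textbf{Step 1 (three-point weighted interpolation).} I would apply Lemma~\ref{l:holder-weight} twice (or equivalently invoke a three-point version of H\"older with weights obtained by iterating) to produce, for any small $\eta>0$, a decomposition
\[
\|f\|_{L^{q_1}} \le \|f\|_{L^1_{k_\eps}}^{\eta}\, \|f\|_{L^p}^{\lambda_2}\, \|f\|_{L^{pp_0}_{k_0/p}}^{\lambda_3},
\]
where $\eta+\lambda_2+\lambda_3 = 1$, $\eta + \lambda_2/p + \lambda_3/(pp_0) = 1/q_1$, and the weight constraint $\eta\, k_\eps + \lambda_3 (k_0/p) = 0$ forces $k_\eps = \lambda_3 |k_0|/(\eta p)$, which is large when $\eta$ is small. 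Solving the linear system in $(\lambda_2,\lambda_3)$ shows that as $\eta\to 0$ one has $\lambda_2 \to \alpha$ and $\lambda_3 \to 1-\alpha$, where $\alpha$ is the interpolation exponent from Lemma~\ref{l:interpol}.

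\textbf{Step 2 (Young in two stages).} Raising to the $(p+1)$-th power and first splitting off the $L^{pp_0}_{k_0/p}$ factor by $AB \le \eps A^{P_3} + C_\eps B^{P_3'}$ with $P_3 = p/[\lambda_3(p+1)]$, the $A^{P_3}$ term becomes exactly $\|f\|_{L^{pp_0}_{k_0/p}}^{p} = \|f^p\|_{L^{p_0}_{k_0}}$. The remaining factor $\|f\|_{L^1_{k_\eps}}^{\eta(p+1)P_3'}\,\|f\|_{L^p}^{\lambda_2(p+1)P_3'}$ is then split by a second Young inequality with exponents $A,B>1$ satisfying $1/A+1/B=1$, yielding the two terms $\|f\|_{L^1_{k_\eps}}^{r_\eps}$ and $\|f\|_{L^p}^{p\beta_\eps}$ with
\[
r_\eps = \eta(p+1) P_3'\, A, \qquad p\beta_\eps = \lambda_2(p+1) P_3'\, B.
\]
Choosing $B = 1+\eta$ (so $A = 1+1/\eta$) makes $\beta_\eps \to \alpha(p+1)Q/p = \beta$ as $\eta\to 0$, since $P_3' \to Q$ (the Young exponent from Lemma~\ref{l:interpol}), and keeps $r_\eps$ bounded and strictly greater than $1$.

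\textbf{Main obstacle.} The crucial point is to verify $P_3>1$, i.e.\ $\lambda_3(p+1) < p$. In the limit $\eta\to 0$ this reduces to $(1-\alpha)(p+1)<p$, equivalently $\alpha > 1/(p+1)$, which in turn is equivalent to $(p+1)/q_1 > 1/p + 1/p_0$. Using $(p+1)/q_1 = 2+\gamma/d$ and $1/p_0 = 1-2s/d$, this is exactly $1/p < 1 + (\gamma+2s)/d$, i.e.\ the lower bound $p > d/(d+\gamma+2s)$ from the hypothesis. By continuity this inequality persists for $\eta$ sufficiently small, which is why the statement restricts to $\eps>0$ small enough. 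Once this is in place, the linear dependence of $(\lambda_2,\lambda_3,k_\eps)$ on $\eta$ gives the formula \eqref{e:beta} in the limit, and one reads off the dependence of $k_\eps$ on $k_0,p,d,s,\gamma,\eps$ from Step~1.
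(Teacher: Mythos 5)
Your proposal is correct and takes essentially the same route as the paper: you absorb the negative weight $k_0/p$ on the $L^{pp_0}$ endpoint by bringing in a third space $L^1_{k_\eps}$ with a large positive weight, interpolate, then apply Young's inequality to peel off the $\eps\|f^p\|_{L^{p_0}_{k_0}}$ term and a second Young's to separate the $L^p$ and $L^1_{k_\eps}$ contributions, and the whole thing degenerates to Lemma~\ref{l:interpol} as the perturbation vanishes. The only cosmetic difference is the parametrization of the small perturbation: the paper replaces $p$ by $p_\eps = p-\eps$ and does two successive two-point weighted H\"older interpolations (first $L^{q_1}$ between $L^{p_\eps}_{k_1}$ and $L^{pp_0}_{k_0/p}$, then $L^{p_\eps}_{k_1}$ between $L^p$ and $L^1_{k_\eps}$), while you write the equivalent three-point weighted H\"older directly with exponent $\eta$ on the $L^1_{k_\eps}$ endpoint; both reduce the key obstacle to the same positivity condition $P>1 \iff p > d/(d+\gamma+2s)$, which you verify correctly.
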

\begin{rem}
The exponent $p_1$ appears in Lemma~\ref{l:convol} above.
  The exponent $p_0$ and the weight $k_0$ appear in Lemma~\ref{l:lower} below. 
\end{rem}
\begin{proof}
  We argue as in the proof of Lemma~\ref{l:interpol} but we use H\"older's inequality with weights.
  We consider $p_\eps = p -\eps <p$ and write $\frac1{q_1} = \frac{\alpha_\eps}{p_\eps} + \frac{1-\alpha_\eps}{p p_0}$.
  Since $p_\eps <p$, we have $\alpha_\eps > \alpha$. 
  We then write $0 = (1-\alpha_\eps) (k_0 /p) + \alpha_\eps k_1$,  and get from Lemma~\ref{l:holder-weight}
\[
  \|f\|_{L^{q_1}}^{p+1}   \le \|f\|_{L^{p_\eps}_{k_1}}^{\alpha_\eps(p+1)} \|f\|_{L^{pp_0}_{k_0/p}}^{(1-\alpha_\eps)(p+1)}.
\]
Using once again H\"older's inequality for product but with $P_\eps$ such that
\[P_\eps=(1-\alpha_\eps)^{-1} (p+1)^{-1}p \to P > 1 \qquad \text{ as } \eps \to 0,\]
we get
\[ \|f\|_{L^{q_1}}^{p+1}    \le C_{\eps} \|f\|_{L^{p_\eps}_{k_1}}^{\alpha_\eps (p+1)Q_\eps} +  \eps \|f^p\|_{L^{p_0}_{k_0}}.\]
We have  $\alpha_\eps (p+1) Q_\eps  = \beta_\eps p$ with
\[ \beta_\eps = \frac{\alpha_\eps(p+1)}{\alpha_\eps (p+1)-1} = 1 + \frac1{\alpha_\eps (p+1)-1} \to  1 + \frac1{\alpha (p+1)-1} = \beta \text{ as } \eps \to 0. \]
 We now use H\"older's inequality with weights, see Lemma~\ref{l:holder-weight}, in order to get
the desired result. 
\end{proof}

We will need the following technical estimate in the proof of Theorem~\ref{t:prodi-serrin}.
\begin{lem}[A second interpolation estimate with weights]\label{l:interpol-again}
  Let $p \in \left(\frac{d}{d+\gamma+2s},\frac{d}{d+\gamma} \right)$
  and  $\frac1{r_p} = \frac1p \left( 2 + \frac{\gamma}d - \frac1{p} \right)$.
  Then $r_p \in [p/2,pp_0]$ and
\[
 \left\| F\right\|_{L^{r_p}}^p  \le C_\eps^0 \|F\|_{L^1_{k_p}} \int_{\R^d} F^{p-1} \langle v \rangle^{k_p} \dv   + \eps \left\| F^p\right\|_{L^{p_0}_{k_0}} 
\]
for some $k_p >0$. 
\end{lem}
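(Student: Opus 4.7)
The plan is to derive the bound by interpolating $\|F\|_{L^{r_p}}$ with a three-term weighted Hölder inequality between $L^1$, $L^{p-1}$ and $L^{pp_0}$, and then using Young's inequality to absorb the $L^{pp_0}$ factor with the small constant $\eps$. First I would rewrite
\[ \int_{\R^d} F^{p-1} \langle v \rangle^{k_p} \dv = \|F\|_{L^{p-1}_{k_p/(p-1)}}^{p-1}, \]
so that the target inequality becomes a trilinear product of weighted Lebesgue norms.

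Applying a three-factor version of Lemma~\ref{l:holder-weight} would give
\[ \|F\|_{L^{r_p}} \le \|F\|_{L^1_{k_p}}^a \, \|F\|_{L^{p-1}_{k_p/(p-1)}}^b \, \|F\|_{L^{pp_0}_{k_0/p}}^c, \]
under the constraints $a+b+c=1$, $\frac{1}{r_p} = a + \frac{b}{p-1} + \frac{c}{pp_0}$, and the weight balance $a k_p + b \frac{k_p}{p-1} + c \frac{k_0}{p} = 0$. In order to match the shape of the right-hand side of the lemma after raising to the $p$-th power, I would impose $b = (p-1)a$, which yields $a = (1-c)/p$ and $b = (p-1)(1-c)/p$. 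Substituting into the identity for $1/r_p$ and using $\frac{1}{p_0} = 1 - \frac{2s}{d}$ together with $\frac{p}{r_p} = 2 + \frac{\gamma}{d} - \frac{1}{p}$ forces
\[ c = \frac{d/p - \gamma}{d + 2s}, \]
which lies in $(0,1)$: positivity is immediate from $\gamma<0$, and $c<1$ is exactly the hypothesis $p > d/(d+\gamma+2s)$. The weight balance then determines $k_p = -\frac{c\, k_0}{2(1-c)}$, which is positive precisely because $k_0 < 0$.

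Raising the Hölder inequality to the $p$-th power and applying Young's inequality $AB \le \eps B^{1/c} + C_\eps A^{1/(1-c)}$ with $B = \|F\|_{L^{pp_0}_{k_0/p}}^{cp}$ and $A$ equal to the product of the first two factors, the identities $ap/(1-c) = 1$, $bp/(1-c) = p-1$ and $cp \cdot (1/c) = p$ deliver
\[ \|F\|_{L^{r_p}}^p \le \eps \, \|F^p\|_{L^{p_0}_{k_0}} + C_\eps \, \|F\|_{L^1_{k_p}} \, \|F\|_{L^{p-1}_{k_p/(p-1)}}^{p-1}, \]
using $\|F\|_{L^{pp_0}_{k_0/p}}^p = \|F^p\|_{L^{p_0}_{k_0}}$. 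Rewriting the $L^{p-1}$ factor as the integral in the statement gives the desired estimate. The two-sided bound $r_p \in [p/2, pp_0]$ follows directly from $\frac{p}{r_p} = 2 + \frac{\gamma}{d} - \frac{1}{p}$: the upper bound is the lower endpoint of the range of $p$, and the lower bound is automatic from $\gamma<0$.

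The only subtlety is algebraic: one must verify that the three Hölder exponents $a,b,c$ and the weight $k_p$ are simultaneously positive, and this is exactly what pins down the range of $p$ in the hypothesis together with the sign assumption $k_0<0$. Once this bookkeeping is done, there is no analytic obstacle.
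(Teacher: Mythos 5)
Your proposal is correct, and it is essentially the same argument as the paper's: both rest on weighted H\"older interpolation (Lemma~\ref{l:holder-weight}) followed by Young's product inequality to split off the $\|F^p\|_{L^{p_0}_{k_0}}$ term with the small prefactor $\eps$. The only difference is packaging: the paper first interpolates $L^{r_p}$ between $L^{p/2}$ and $L^{pp_0}$, applies Young, and then interpolates $L^{p/2}$ between $L^1$ and $L^{p-1}$; you collapse this into a single three-factor H\"older followed by one application of Young, which is an equivalent bookkeeping of the same exponents (and your value of $k_p$ agrees with the paper's up to the harmless relabeling allowed by the statement's ``for some $k_p>0$'').
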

\begin{proof}
We first apply H\"older's inequality with weights, see Lemma~\ref{l:holder-weight}, with $r_p \in [p/2,p p_0]$ and get for some $k_p>0$ such that,
\[
 \left\| F\right\|_{L^{r_p}}^p  \le  C_\eps^0 \left\| F\right\|_{L^{\frac{p}2}_{k_p}}^p + \eps \left\| F\right\|_{L^{pp_0}_{k_0/p}}^p.
\]
The last line also holds true if $p<2$ with $\|F \|_{L^{p/2}} := \| \sqrt{F}\|_{L^p}^2$.
Since we also have,
\[ \|F \|_{L^{\frac{p}2}_{k_p}} \le \|F\|_{L^1_{k_p}}^{\frac1p} \|F\|_{L^{p-1}_{k_p}}^{1-\frac1p},\]
we get the announced inequality.
\end{proof}

\section{Coercivity estimates}
\label{s:coercivity}

In this section, we introduce a family of increasing convex functions  that are sublinear and approximate $r \mapsto r^p$ for any given $p>1$.
Then we derive two lower bounds for the associated dissipation. 

\subsection{Approximate $L^p$ norms and  associated Bregman distances}
\label{ss:phik}

\paragraph{The approximate $L^p$ norm.}

We consider $\phik (r)$ convex such that
\[ \phik (0) = \phik'(0)=0 \text{ and } \phik'' (r) = p (p-1) r^{p-2} \un_{\{r \le \kappa\}}.\]
In order words,
\[ \phik (r) = (r \wedge \kappa)^p + p \kappa^{p-1} (r - \kappa)_+.\]

\paragraph{The function $\Phi_\kappa$.}
When $\varphi = \varphi_k$, the function $\Phi$ appearing in Definition~\ref{d:suitable} equals
\begin{equation} \label{e:Phi_k}
  \Phi_\kappa(f) = (p-1) (f \wedge \kappa)^p.
\end{equation}

\paragraph{The Bregman distance.}
In this case, we have
\[
  d_{\phik} (r,s) =
  \begin{cases}
    s^p -r^p - pr^{p-1} (r-s) & \text{ if } r,s \le \kappa, \\
    p (\kappa^{p-1}- r^{p-1}) s + (p-1) (r^p - \kappa^p) &\text{ if } r \le \kappa \le  s .
  \end{cases}
\]
We compute that,
\begin{equation}
  \label{e:breg-1}
  d_{\phik} (r,r/2) =    C_{1,p} r^p  \quad \text{ if } r \le \kappa, 
\end{equation}
with $C_{1,p} = 2^{-p} - 1 + p/2 >0$ for $p >1$. In the case where $2 r < \kappa < s$, we write
\begin{align*}
  d_{\phik} (r,s) &= p (\kappa^{p-1}- r^{p-1}) s + (p-1) (r^p - \kappa^p) \\
                  & = p (\kappa^{p-1}- r^{p-1}) (s-\kappa) + \kappa^p - p \kappa r^{p-1} + (p-1) r^p \\
  & =  p (\kappa^{p-1}- r^{p-1}) (s-\kappa) + \kappa^p \Gamma (r/\kappa) \\
\intertext{with $\Gamma (\rho ) = (p-1) \rho^p - p \rho^{p-1} + 1$. Remark that the function $\Gamma$ is non-increasing in $(0,1)$,}
                  & \ge p (1-2^{1-p}) \kappa^{p-1} (s-\kappa) + \Gamma (1/2) \kappa^p.
\end{align*}
We thus have,
\begin{equation}
  \label{e:breg-2}
 d_{\phik} (r,s) \ge C_{2,p} (\phik (s) + \kappa^p)  \quad \text{ if } s > \kappa \text{ and } r < \kappa/2.
\end{equation}
where $C_{2,p} = \min (1-2^{1-p}, (p-1)2^{-p} - p 2^{1-p}+1)> 0$. 

\subsection{Approximate $L^p$ dissipation estimate}

\begin{lem}[First lower bound for the dissipation] \label{l:lower}
\[
  \iint_{\R^d \times \R^d} d_{\phik} (f,f') K (v,v') \dv \dv'
 \ge  c_{1,D} \| (f \wedge \kappa)^p \|_{L^{p_0}_{k_0}} - C_{1,D} 
    \int_{\R^d} (f\wedge \kappa)^p (v) \langle v \rangle^{\gamma} \dv .
\]
where $p_0>1$ is such that $\frac1{p_0} = 1 - \frac{2s}d$ and  $k_0 = \gamma + 2s - \frac{2s}d<0$. 
  The constants $c_{1,D}$ and $C_{1,D}$ only depend on hydrodynanical bounds from \eqref{e:hydro}, dimension $d$, $\gamma,s$ and $p$.
\end{lem}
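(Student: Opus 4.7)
The plan is to reduce the Bregman dissipation to a weighted fractional Sobolev-type quadratic form in $g_\kappa := (f \wedge \kappa)^{p/2}$, and then to apply a weighted fractional Sobolev embedding that produces the claimed bound on $\|g_\kappa^2\|_{L^{p_0}_{k_0}} = \|(f \wedge \kappa)^p\|_{L^{p_0}_{k_0}}$ with the same exponent $p_0$ and weight $k_0 = \gamma + 2s - 2s/d$ as in Theorem~\ref{t:entropy}. The first step is the universal pointwise inequality
\[ d_{\phik}(r, s) \ge c_p \bigl((s \wedge \kappa)^{p/2} - (r \wedge \kappa)^{p/2}\bigr)^2 \qquad \text{for all } r, s \ge 0, \]
obtained by a short case analysis: for $r, s \le \kappa$ this is the classical Stroock-type inequality $\rho^p - r^p - p r^{p-1}(\rho - r) \ge c_p(\rho^{p/2} - r^{p/2})^2$; for $r \le \kappa \le s$ the explicit formula of Section~\ref{ss:phik} gives $\partial_s d_{\phik}(r, s) = p(\kappa^{p-1} - r^{p-1}) \ge 0$, so $d_{\phik}(r, s) \ge d_{\phik}(r, \kappa)$ and we fall back to the previous case (with the symmetric subcase being analogous); and for $r, s \ge \kappa$ both sides vanish. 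Integrated against $K(v, v')$ this replaces the Bregman dissipation by the symmetric quadratic form $c_p \iint (g_\kappa(v) - g_\kappa(v'))^2 K(v, v') \dv \dv'$.

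\textbf{Non-degeneracy plus weighted embedding.} Next I would apply Lemma~\ref{l:sphere} with $R(v') := \eps_0 \langle v'\rangle$: on the set $S(v') \cap B_{R(v')}(v')$, which has relative density $\gtrsim \langle v'\rangle^{-1}$ inside $B_{R(v')}(v')$, we have $K(v, v') \ge \mu_0 \langle v'\rangle^{\gamma + 2s + 1} |v - v'|^{-d - 2s}$. Restricting the $v$-integration to this set gives the lower bound
\[ c_p \mu_0 \int_{\R^d} \langle v'\rangle^{\gamma + 2s + 1} \int_{S(v') \cap B_{R(v')}(v')} \frac{\bigl(g_\kappa(v) - g_\kappa(v')\bigr)^2}{|v - v'|^{d + 2s}} \dv \dv'. \]
For each fixed $v'$ I would then invoke a local fractional Sobolev-type embedding on $B_{R(v')}(v')$ that exploits the positive lower density of $S(v')$ to produce a lower bound of the form $\langle v'\rangle^{-1 - 2s/d} \|g_\kappa^2\|_{L^{p_0}(B_{R(v')}(v'))}$ minus a local $L^1$ term: the $\langle v'\rangle^{-1}$ factor reflects the small density of $S(v')$, while the $\langle v'\rangle^{-2s/d}$ factor comes from the fractional Sobolev exponent $p_0 = d/(d - 2s)$ together with the radius $R(v') \sim \langle v'\rangle$. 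Combined with the outer weight $\langle v'\rangle^{\gamma + 2s + 1}$, the powers of $\langle v'\rangle$ collapse to exactly $\langle v'\rangle^{k_0}$, and a Besicovitch-type covering of $\R^d$ by the balls $B_{R(v')}(v')$ reassembles $c_{1, D} \|(f \wedge \kappa)^p\|_{L^{p_0}_{k_0}}$, while the $L^1$ correction (weighted against the residual power of $\langle v\rangle$) becomes the announced error term $C_{1,D}\int (f \wedge \kappa)^p \langle v\rangle^\gamma \dv$.

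\textbf{Main obstacle.} The crux is the weighted fractional Sobolev embedding: since $|S(v') \cap B_R(v')| \sim R^d/\langle v'\rangle$ is only a $\langle v'\rangle^{-1}$ fraction of $|B_R(v')|$, a direct fractional Poincaré inequality on the ball would lose a full factor $\langle v'\rangle$, and one must instead use the uniform positive-density bound on $S(v')$ together with a Nash-type argument, tightly tracking every power of $\langle v'\rangle$ so that the cumulative weight comes out as $k_0$. This is the $L^p$ analogue of the weighted Sobolev step used by Chaker--Silvestre \cite{chaker2022entropy} for the entropy dissipation; the tolerance of the $\langle v\rangle^\gamma$-weighted $L^1$ error is what makes the estimate clean and, as will become apparent in Section~\ref{s:propagation}, usable downstream in the Riccati-type argument for the approximate $L^p$-norm.
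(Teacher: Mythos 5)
Your pointwise reduction $d_{\phik}(r,s) \ge c_p\bigl((s\wedge\kappa)^{p/2} - (r\wedge\kappa)^{p/2}\bigr)^2$ is correct (the case analysis checks out, including the mixed case $s \le \kappa \le r$ via $d_{\phik}(r,s) = d_{\phik}(\kappa,s)$ there), and it is a clean, appealing way to turn the Bregman dissipation into a symmetric quadratic form in $g_\kappa = (f\wedge\kappa)^{p/2}$. But this reduction sets you on a path that is genuinely different from the paper's, and you then stop exactly at the hard part.

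The paper never passes to a quadratic form. Its proof is a pointwise Chebyshev argument: restrict to $\{f' < (f\wedge\kappa)(v)/2\}$, use the Bregman monotonicity of Lemma~\ref{l:breg} to replace $d_{\phik}(f,f')$ by $C_{1,p}(f\wedge\kappa)^p(v)$ on that region, apply the non-degeneracy cone of Lemma~\ref{l:cone} (for $v$ fixed, not Lemma~\ref{l:sphere}), and — crucially — pick the radius $R = R(v)$ \emph{adaptively}, scaling like $N^{p_0/d}\bigl((f\wedge\kappa)(v)^{pp_0}\langle v\rangle^{k_0 p_0 - 1}\bigr)^{-1/d}$ with $N = \|(f\wedge\kappa)^p\|_{L^{p_0}_{k_0}}$, so that Chebyshev against that very norm ensures $f'< (f\wedge\kappa)(v)/2$ on half of $C(v)\cap B_R(v)$. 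This adaptive choice makes the dissipation lower bound come out as $N^{1-p_0}\int_G (f\wedge\kappa)^{pp_0}\langle v\rangle^{k_0p_0}$, and a level-set decomposition $\R^d = G \cup B$ turns this into $c_{1,D}N - C_{1,D}\int(f\wedge\kappa)^p\langle v\rangle^\gamma$. No Sobolev embedding is invoked, only Chebyshev, a good set/bad set split, and H\"older on the bad set. This is short, elementary, and self-contained.

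Your route instead requires a weighted fractional Sobolev-type inequality on the sparse sets $S(v')$, with every power of $\langle v'\rangle$ tracked. You correctly identify this as "the crux" and "the main obstacle," but you neither state nor prove the needed inequality. That is a genuine gap, not a deferrable technicality. Because $S(v')$ has relative density only $\sim \langle v'\rangle^{-1}$ and is a thin shell around a sphere through $0$ and $v'$, the standard Gagliardo seminorm on $B_{R(v')}(v')$ is \emph{not} controlled by the restricted one; recovering an embedding with only a linear loss in $\langle v'\rangle$ is precisely the content of the Chaker--Silvestre machinery \cite{chaker2022entropy} (cf.\ the weighted metric $d_{\mathrm{GS}}$ and their Theorem~0.1, which the paper does invoke — but in the appendix, to prove uniform integrability for the \emph{construction} of suitable weak solutions, not here). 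If you want to run this route, you would essentially be rederiving the coercivity half of \cite{chaker2022entropy} for $(f\wedge\kappa)^{p/2}$ in place of $\sqrt{f}$. That is a real, not small, piece of missing work, and it is exactly what the paper's Chebyshev argument is designed to sidestep. Two smaller remarks: fixing $R(v') = \eps_0\langle v'\rangle$ forfeits the self-improving trick, so your embedding would have to hit $L^{p_0}_{k_0}$ in one blow; and for a symmetric quadratic form, Lemma~\ref{l:cone} (fixed $v$) would be the more natural non-degeneracy input than Lemma~\ref{l:sphere}, since the error term in the statement is weighted by $\langle v\rangle^\gamma$, not $\langle v'\rangle^\gamma$.
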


\begin{proof}
  We first reduce the study to velocities $v$ and $v'$ such that $f(v') < (f\wedge \kappa)(v)/2$.
\begin{align}
\nonumber  \iint_{\R^d \times \R^d} & d_{\phik} (f,f') K (v,v')  \dv \dv' \\
\nonumber  &\ge   \iint_{\{f'< (f \wedge \kappa)/2\}} d_{\phik} (f,f') K (v,v') \dv \dv' \\
  \intertext{we use monotonicity properties of the Bregman distance, see Lemma~\ref{l:breg},}
\nonumber                           &\ge  \int_{\R^d}  d_{\phik} ((f \wedge \kappa),(f \wedge \kappa)/2) \left\{ \int_{\{f'< (f\wedge \kappa)/2\}} K (v,v') \dv' \right\} \dv \\
\intertext{we use  the fact that $d_{\phik}((f\wedge \kappa)(v),(f\wedge \kappa)(v)/2) = C_{1,p} (f\wedge \kappa)(v)^p$, see \eqref{e:breg-1},}
\nonumber                           &\ge  C_{1,p} \int_{\R^d}  (f \wedge \kappa)^p \left\{ \int_{\{f'< (f\wedge \kappa)/2\}} K (v,v') \dv' \right\} \dv \\  
  \intertext{we use the non-degeneracy cone to estimate $K(v,v')$ from below, see Lemma~\ref{l:cone},}
\nonumber  &\ge \lambda_0 C_{1,p} \int_{\R^d}  (f \wedge \kappa)^p  \langle v \rangle^{\gamma+2s+1} \left\{ \int_{\{f'< (f\wedge \kappa)/2\} \cap C(v)}  |v'-v|^{-d-2s} \dv' \right\} \dv \\
  \intertext{we further reduce the set of integration for velocities $v'$ by considering a ball $B_R (v)$,}
\label{e:ast}  &\ge \lambda_0 C_{1,p} \int_{\R^d}  (f \wedge \kappa)^p(v) \langle v \rangle^{\gamma+2s+1} R^{-d-2s} |\{f'< (f\wedge \kappa)/2\} \cap C(v) \cap B_R (v)\}| \dv.
\end{align}
We continue reducing the domain of integration in $v$ by considering
\[ G = \{ v\in \R^d : \langle v \rangle \ge R \}\]
with some $R$ to be chosen. 

\paragraph{Choice of $R$.} We  choose $R$ in order to ensure (more or less) that $f' < (f \wedge \kappa)/2$ in more than the half of $C(v) \cap B_R(v)$. More precisely, we know from Lemma~\ref{l:cone} that
$|C(v) \cap B_R (v) | \ge \lambda_0 R^d \langle v \rangle^{-1}$ and we aim at finding $R$ such that
$  | \{ f' < (f\wedge \kappa)(v)/2 \} \cap B_R (v) \cap C(v) |
\ge \frac{\lambda_0}2 \frac{R^d}{\langle v \rangle} .$

Let $N$ denote $\| (f \wedge \kappa)^p \|_{L^{p_0}_{k_0}}$.   We pick $R>0$ such that
\begin{equation}
  \label{e:R}
  R^d = C_R \frac{N^{p_0}}{ (f \wedge \kappa)(v)^{p p_0} \langle v \rangle^{k_0 p_0-1}}
  \quad \text{ with } \quad N = \|(f \wedge \kappa)^p\|_{L^{p_0}_{k_0}},
\end{equation}
For $v \in G$ and $v' \in B_R (v)$, we have $\langle v' \rangle \le 2 \langle v \rangle$.
With such a piece of information at hand, we can use  Chebyshev's inequality to get,
\begin{align*}
 | \{ (f'\wedge \kappa) \ge (f\wedge \kappa)(v)/2 \} \cap B_R (v) \cap C(v) |  & \le \frac{2^{p p_0}}{(f\wedge \kappa)^{p p_0}(v)} \int_{\{\langle v' \rangle \le 2 \langle v \rangle\}} (f \wedge \kappa)^{p p_0}(v') \dv' \\
                                                                               & \le \frac{2^{p p_0+ k_0 p_0}}{ (f \wedge \kappa)(v)^{p p_0} \langle v \rangle^{k_0 p_0}} \int_{\R^d}  (f \wedge \kappa)^{p p_0}(v') \langle v' \rangle^{k_0p_0} \dv' \\
                                                                               & \le \frac{2^{pp_0 + k_0p_0}}{(f \wedge \kappa)(v)^{p p_0} \langle v \rangle^{k_0p_0}} N^{p_0} \\
  & = \left(\frac{2^{pp_0+k_0p_0}}{C_R}\right) \frac{R^d}{\langle v \rangle}.
\end{align*}
We used $k_0p_0 <0$ to get the second line.

\paragraph{Estimate of the measure of the sublevel set.}
We now choose $C_R = 2^{pp_0 + k_0p_0+1}/\lambda_0$ and get,
\[  | \{ (f'\wedge \kappa) \ge (f\wedge \kappa)(v)/2 \} \cap B_R (v) \cap C(v) | \le \frac{\lambda_0}2 \frac{R^d}{\langle v \rangle} .\]  
We continue by remarking that, since $C(v)$ is a cone, we have $|C(v) \cap B_R (v) | = |C(v) \cap B_1 (v) | R^d$
and we have from Lemma~\ref{l:cone} that $|C(v) \cap B_1 (v) | \ge \lambda_0 \langle v \rangle^{-1}$.
In particular, we can write,
\begin{equation}
  \label{e:ast-ast}
\begin{aligned}
  |\{f'< (f\wedge \kappa)/2\} \cap C(v) \cap B_R (v)\}|
  & = |C(v) \cap B_R (v)| - |\{f'\ge (f\wedge \kappa)/2\} \cap C(v) \cap B_R (v)\}| \\
  & \ge \frac{\lambda_0}2 \frac{R^d}{\langle v \rangle}  .
\end{aligned}
\end{equation}

\paragraph{Conclusion.}
We continue with the coercivity estimate. We use \eqref{e:ast}, \eqref{e:R} and \eqref{e:ast-ast} to get,
\[  
  \iint_{\R^d \times \R^d} d_{\phik} (f,f') K (v,v') \dv \dv' \ge 
  C_{1,p} \frac{\lambda_0^2}{2} C_R^{-\frac{2s}d} N^{-\frac{2s}d p_0} \int_{G}  (f\wedge \kappa)^{p(1+\frac{2s}d  p_0)} \langle v \rangle^{\gamma+2s + \frac{2s}d(k_0 p_0-1)}    \dv.
\]
We remark that $-\frac{2s}d p_0 = 1 -p_0$,   $p(1+\frac{2s}d  p_0) = p p_0$ and $\gamma+2s + \frac{2s}d(k_0 p_0-1) = k_0 p_0$.
We thus get
\[  \iint_{\R^d \times \R^d} d_{\phik} (f,f') K (v,v') \dv \dv' \ge c_{1,D} \| (f \wedge \kappa)^p \|_{L^{p_0}_{k_0}}^{1-p_0} \int_{G} (f\wedge \kappa)^{p p_0} (v) \langle v \rangle^{k_0 p_0} \dv \]
with $c_{1,D} = C_{1,p} \frac{\lambda_0^2}{2} C_R^{-\frac{2s}d}$  where
  \(G = \{ v \in \R^d : (f \wedge \kappa)^{p p_0} (v) \langle v \rangle^{k_0 p_0} \ge C_G \| (f \wedge \kappa)^p \|_{L^{p_0}_{k_0}}^{p_0} \langle v \rangle^{1-d}\}.\)
  If we consider $B = \R^d \setminus G$ then we have,
\begin{align*}   
\int_{B} (f\wedge \kappa)^{p p_0} (v) \langle v \rangle^{k_0 p_0} \dv
  &\le C_G^{1-\frac1{p_0}} \| (f \wedge \kappa)^p \|_{L^{p_0}_{k_0}}^{p_0-1} \int_{B} (f\wedge \kappa)^{p} (v) \langle v \rangle^{k_0 +(1-d) (1-\frac1{p_0})} \dv \\
  \intertext{since $\frac1{p_0} = 1- \frac{2s}d$ and $k_0 = \gamma + 2s - \frac{2s}d$, we get}
&\le C_G^{1-\frac1{p_0}} \| (f \wedge \kappa)^p \|_{L^{p_0}_{k_0}}^{p_0-1} \int_{B} (f\wedge \kappa)^{p} (v) \langle v \rangle^{\gamma} \dv.
\end{align*}
From this upper bound, we get,
\[
    \iint_{\R^d \times \R^d} d_{\phik} (f,f') K (v,v') \dv \dv'
   \ge  c_{1,D} \| (f \wedge \kappa)^p \|_{L^{p_0}_{k_0}} - c_{1,D} C_G^{1-\frac1{p_0}}
    \int_{\R^d} (f\wedge \kappa)^p (v) \langle v \rangle^{\gamma} \dv .
\]
We get the desired lower bound with $C_{1,D} = c_{1,D} C_G^{1-\frac1{p_0}}$. 
\end{proof}

\begin{lem}[Second lower bound for the dissipation] \label{l:lower-bis}
Let $p \in (1,\infty)$. There exists $\kappa_0 \ge 1$ only depending on $d,\gamma,s,p$ and the bounds in \eqref{e:hydro} such that for all $\kappa \ge \kappa_0$,
  \[ \iint_{\R^d \times \R^d} d_{\phik} (f,f') K (v,v') \dv \dv' \ge c_{2,D} \kappa^{\frac{2sp }d} \| (f \wedge \kappa)\|_{L^p}^{-\frac{2sp}d} \int_{\R^d} \kappa^{p-1}(f-\kappa)_+ (v) \langle v \rangle^{k_0} \dv \]
  where we recall that $k_0 = \gamma +2s-\frac{2s}d<0$.
  The constant $c_{2,D}$ also only depends on hydrodynanical bounds, dimension, $s$, $\gamma$ and $p$.
\end{lem}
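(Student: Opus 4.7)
The idea is to restrict the dissipation integral to the region where $f(v) < \kappa/2$ and $f(v') > \kappa$, use the Bregman distance bound \eqref{e:breg-2} to produce the factor $(f-\kappa)_+$, and then estimate the kernel $K(v,v')$ from below for fixed $v'$ by exploiting the non-degeneracy set $S(v')$ from Lemma~\ref{l:sphere}. This is the natural counterpart to the proof of Lemma~\ref{l:lower}, but now with the roles of $v$ and $v'$ exchanged in the sense that the lower bound on $K$ is taken uniformly in the \emph{second} argument.

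\textbf{Step 1: reduction to the bad set.} On the set $\{f < \kappa/2\}\times\{f' > \kappa\}$, estimate \eqref{e:breg-2} and the identity $\phik(f') = \kappa^p + p\kappa^{p-1}(f'-\kappa)_+$ for $f'>\kappa$ give
\[ d_{\phik}(f,f') \ge C_{2,p}\, p\, \kappa^{p-1} (f'-\kappa)_+.\]
Since $(f'-\kappa)_+$ vanishes when $f' \le \kappa$, after restricting the integral one obtains
\[ \iint d_{\phik}(f,f') K(v,v')\dv\dv' \ge C_{2,p} p \kappa^{p-1} \int_{\R^d} (f'-\kappa)_+ \left(\int_{\{f<\kappa/2\}} K(v,v')\dv\right) \dv'.\]

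\textbf{Step 2: lower bound on the inner integral for fixed $v'$.} For fixed $v'$, restrict the inner $\dv$ integral to $S(v')\cap B_R(v')$, where $R=R(v')$ will be chosen shortly. On $S(v')$, Lemma~\ref{l:sphere} gives $K(v,v')\ge \mu_0 \langle v'\rangle^{\gamma+2s+1}|v-v'|^{-d-2s}\ge \mu_0 \langle v'\rangle^{\gamma+2s+1}R^{-d-2s}$. Chebyshev's inequality yields
\[ \bigl|\{f \ge \kappa/2\}\bigr| \le (2/\kappa)^p \|f\wedge\kappa\|_{L^p}^p,\]
so choosing
\[ R(v')^d = C_R\, \kappa^{-p} \|f\wedge\kappa\|_{L^p}^p \langle v'\rangle, \qquad C_R = 2^{p+1}/\mu_0,\]
ensures $|\{f\ge\kappa/2\}\cap B_R(v')|\le \tfrac{\mu_0}{2}R^d/\langle v'\rangle$. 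Combined with the measure estimate $|S(v')\cap B_R(v')|\ge \mu_0 R^d/\langle v'\rangle$ from Lemma~\ref{l:sphere}, this gives $|\{f<\kappa/2\}\cap S(v')\cap B_R(v')|\ge \tfrac{\mu_0}{2}R^d/\langle v'\rangle$, and therefore
\[ \int_{\{f<\kappa/2\}} K(v,v')\dv \ge \tfrac{\mu_0^2}{2} \langle v'\rangle^{\gamma+2s} R(v')^{-2s} = c \, \kappa^{\frac{2sp}{d}} \|f\wedge\kappa\|_{L^p}^{-\frac{2sp}{d}} \langle v'\rangle^{k_0}\]
after substituting $R(v')^{-2s}$ and recalling $k_0 = \gamma+2s - 2s/d$.

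\textbf{Step 3: validity constraint on $\kappa$.} The use of Lemma~\ref{l:sphere} requires $R(v')\le \eps_0\langle v'\rangle$ for every $v'\in\R^d$. Since $\langle v'\rangle\ge 1$, it suffices to ensure $C_R\kappa^{-p}\|f\wedge\kappa\|_{L^p}^p \le \eps_0^d$, and from $\|f\wedge\kappa\|_{L^p}^p \le \kappa^{p-1}\|f\|_{L^1}\le \kappa^{p-1}M_0$ this reduces to $\kappa\ge\kappa_0 := C_R M_0/\eps_0^d$. Plugging the bound of Step~2 into Step~1 and relabeling $v'\mapsto v$ gives the claimed inequality. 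The main technical point is the proper choice of $R(v')$: it must be small enough that $R\le\eps_0\langle v'\rangle$ (giving the threshold $\kappa_0$) yet large enough that the Chebyshev bound controls only half of $|S(v')\cap B_R(v')|$, which is exactly why $R^d$ must scale like $\|f\wedge\kappa\|_{L^p}^p \langle v'\rangle$.
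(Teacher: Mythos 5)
Your proof is correct and follows essentially the same route as the paper's: restrict to $\{f<\kappa/2\}\times\{f'>\kappa\}$, apply the Bregman bound \eqref{e:breg-2}, use the non-degeneracy set $S(v')$ from Lemma~\ref{l:sphere} with the radius $R(v')^d = C_R\kappa^{-p}\|f\wedge\kappa\|_{L^p}^p\langle v'\rangle$, Chebyshev's inequality to control the bad set, and the bound $\|f\wedge\kappa\|_{L^p}^p\le\kappa^{p-1}M_0$ to derive the threshold $\kappa_0$. The constants $C_R=2^{p+1}/\mu_0$ and $\kappa_0 = C_R M_0/\eps_0^d$ coincide with the paper's choices, so there is nothing to reconcile.
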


\begin{proof}
  We first reduce the study to velocities $v$ and $v'$ such that $f(v') > \kappa > 2 f(v)$. We then can use the lower bound on the Bregman distance given by~\eqref{e:breg-2} in order
  to write,
\begin{align*}
  \iint_{\R^d \times \R^d}  d_{\phik} (f,f') K (v,v')  \dv \dv' &
  \ge   \iint_{\{f'> \kappa >  2 f \}} d_{\phik} (f,f') K (v,v') \dv \dv' \\
  &\ge  C_{2,p} \int_{\{f' > \kappa\}}  \phik (f') \left\{ \int_{\{f< \kappa/2 \}} K (v,v') \dv \right\} \dv'.
\end{align*}

\paragraph{Use of the non-degeneracy set.}
Given $v' \in \R^d$, we consider the non-degeneracy set
\[ S (v') = \{ v \in \R^d : K(v,v') \ge \lambda_0' \langle v' \rangle^{\gamma +2s +1} |v'-v|^{-d-2s} \}.\]
We use this set in order to continue the previous computation,
\begin{align*}
  \iint_{\R^d \times \R^d}  & d_{\phik} (f,f') K (v,v')  \dv \dv' \\
& \ge C_{2,p} \mu_0 \int_{\{f' > \kappa\}}  \phik (f') \langle v' \rangle^{1+\gamma+2s} \left\{ \int_{\{f< \kappa/2 \} \cap S (v') \cap B_R (v')} |v'-v|^{-d-2s} \dv \right\} \dv' \\
& \ge C_{2,p} \mu_0 \int_{\{f' > \kappa\}}  \phik (f') \langle v' \rangle^{1+\gamma+2s} |\{f< \kappa/2 \} \cap S (v') \cap B_R (v')| R^{-d-2s}  \dv'. 
\end{align*}

\paragraph{Choice of $R$.} We choose $R=R(v')$ so that
\[ \frac{R^d}{\langle v' \rangle} = C_0 \frac{\|(f\wedge \kappa)\|_{L^p}^p}{\kappa^p}\]
for some $C_0>0$ to be chosen later. 
We need to use the non-degeneracy of $K_f$ as expressed in Lemma~\ref{l:sphere}. We thus need such an $R$
to satisfy
\[R \le \eps_0 \langle v' \rangle   \]
or equivalently,
\[ \langle v' \rangle^{d-1} \ge \eps_0^{-d} C_0 \frac{N^p}{\kappa^p} \quad \text{ with } \quad N = \|(f\wedge \kappa)\|_{L^p}.\]
We further remark that
\[ N^p \le \kappa^{p-1} M_0 \] and we conclude that the previous inequality holds true as soon as,
\[ \langle v' \rangle^{d-1} \ge \eps_0^{-d} C_0 \frac{M_0}{\kappa}. \]
We thus choose $C_0$ and $\kappa_0$ such that
\[  C_0 \frac{M_0}{\eps_0^d \kappa_0} \le 1.\]

\paragraph{Estimate of the sub-level set.}
Keeping in mind that we chose $R$ in such a way that $3R \le \langle v' \rangle$, we use Lemma~\ref{l:sphere} and
Chebyshev's inequality in order to get for $v'$ such that $R \ge 2 R_0$,
\begin{align*}
  |\{f< \kappa/2 \} \cap S (v') \cap B_R (v')| &\ge \mu_0 \frac{R^d}{\langle v'\rangle} - |\{f< \kappa/2 \} \cap S (v') \cap B_R (v')|\\
                                               & \ge \mu_0 \frac{R^d}{\langle v'\rangle} - \frac{2^p}{\kappa^p} N^p \\
  & \ge \mu_0 \frac{R^d}{\langle v'\rangle} - \frac{2^p}{C_0}  \frac{R^d}{\langle v'\rangle}.
\end{align*}
We then choose $C_0 = 2^{p+1}/\mu_0$ (and consequently $\kappa_0 = 2^{p+1} M_0 /(\eps_0^d \mu_0)$) and get
\[  |\{f< \kappa/2 \} \cap S (v') \cap B_R (v')| \ge \frac12 \mu_0 \frac{R^d}{\langle v'\rangle}.\]

\paragraph{Conclusion.}
We can now turn back to the dissipation estimate. Recalling that $R^d = C N^p \kappa^{-p} \langle v' \rangle$, we get,
\begin{align*}
  \iint_{\R^d \times \R^d}  & d_{\phik} (f,f') K (v,v')  \dv \dv' \\
                            & \ge C_{2,p} \frac{\mu_0^2}2  \int_{\{f' > \kappa\}}  \phik (f') \langle v' \rangle^{\gamma+2s}  R^{-2s}  \dv' \\
\intertext{we remark that $\phik (f') \un_{\{f' > \kappa\}} = \kappa^p + p \kappa^{p-1} (f'-\kappa)_+$,}
                            & \ge C_{2,p} \frac{\mu_0^2}2 C_0^{-\frac{2s}d} p \kappa^{\frac{2s}d p+p-1} \| (f\wedge \kappa)\|_{L^p}^{-\frac{2s}d p} \int_{\R^d}   (f'-\kappa)_+ \langle v' \rangle^{\gamma+2s - \frac{2s}d}    \dv'.
\end{align*}
We get the result with $c_{2,D} = C_{2,p} \mu_0^2 C_0^{-\frac{2s}d} p /2$.
\end{proof}

\section{Propagation of approximate $L^p$ norms}
\label{s:propagation}

\subsection{Source terms vs. dissipation}

\begin{lem}[Leading source term vs. dissipation]\label{l:leading}
Assume that $f \colon \R^d \to \R$ has moments of any order. Then,
  \[ \int_{\R^d} (f \wedge \kappa)^p ( (f\wedge \kappa) \ast_v |\cdot|^\gamma) \dv
    \le  \eps \| (f\wedge \kappa)^p\|_{L^{p_0}_{k_0}}   +  \bar{C}_\eps \|(f \wedge \kappa) \|_{L^p}^{p \beta_\eps} + \bar{C}_\eps (\Cprop{k_\eps})^{r_\eps}\]
  where $\bar{C}_\eps$  depends on $\eps$, $c_c$, $p$, $d$, $\gamma$ and  moments of $f$.
\end{lem}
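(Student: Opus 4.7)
The plan is to combine the weak Young inequality (Lemma~\ref{l:convol}) with the interpolation estimate with weights (Lemma~\ref{l:interpol-weight}), and then absorb the weighted $L^1$ factor using the propagation of moments (Theorem~\ref{t:moments}).

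First I would apply Lemma~\ref{l:convol} with the ``$g$'' of that lemma equal to $(f\wedge \kappa)^p$ and the ``$f$'' of that lemma equal to $(f\wedge \kappa)$. Since $\|(f\wedge \kappa)^p\|_{L^{q_1/p}} = \|(f\wedge \kappa)\|_{L^{q_1}}^p$, this immediately yields
\[
\int_{\R^d} (f\wedge \kappa)^p \bigl((f\wedge \kappa)\ast_v |\cdot|^\gamma\bigr) \dv
\le C_{d,\gamma}\,\|(f\wedge \kappa)\|_{L^{q_1}}^{p+1}.
\]
For this step to apply we need $p$ in the range $[1, d/(d+\gamma))$, which is guaranteed by the working assumption that $p$ lies in the interval used in Lemma~\ref{l:interpol-weight}.

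Next I would feed the right-hand side into the weighted interpolation estimate of Lemma~\ref{l:interpol-weight}, applied to $(f\wedge \kappa)$. This produces three contributions: a small multiple $\tilde{\eps}\,\|(f\wedge \kappa)^p\|_{L^{p_0}_{k_0}}$, a power of the ordinary $L^p$ norm $\|(f\wedge \kappa)\|_{L^p}^{p\beta_{\tilde{\eps}}}$, and a weighted $L^1$ term $\|(f\wedge \kappa)\|_{L^1_{k_{\tilde{\eps}}}}^{r_{\tilde{\eps}}}$. Choosing $\tilde\eps = \eps/C_{d,\gamma}$ gives the correct coefficient in front of the leading dissipation term.

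The third step is to dispose of the weighted $L^1$ norm. Since $(f\wedge \kappa)\le f$ pointwise and $\langle v\rangle^{k_{\tilde{\eps}}}$ is controlled by $1+|v|^{k_{\tilde{\eps}}}$ up to a constant, we have
\[
\|(f\wedge \kappa)\|_{L^1_{k_{\tilde{\eps}}}} \le \int_{\R^d} f(v)\,\langle v\rangle^{k_{\tilde{\eps}}}\dv
\le C\bigl(M_0 + \Cprop{k_{\tilde{\eps}}}\bigr),
\]
and the assumption that $f$ has moments of any order (supplied by Theorem~\ref{t:moments}) turns this into a constant depending only on $\tilde\eps$, the hydrodynamic bounds and $\Cprop{k_{\tilde{\eps}}}$. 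Renaming $\tilde\eps \leftrightarrow \eps$ and absorbing the harmless prefactor $C_{d,\gamma}$ into $\bar C_\eps$ yields the stated inequality, with $k_\eps := k_{\tilde\eps}$ and $r_\eps := r_{\tilde\eps}$.

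The only genuine obstacle is bookkeeping: one must verify that the exponents produced by chaining Lemmas~\ref{l:convol} and~\ref{l:interpol-weight} are consistent (in particular that the $q_1$ appearing in both lemmas agrees, which is the case because both are defined by $\frac{p+1}{q_1}=2+\frac{\gamma}{d}$), and that the weighted $L^1$ weight $k_\eps$ is finite so that Theorem~\ref{t:moments} supplies the bound. No further estimate is needed beyond these three ingredients.
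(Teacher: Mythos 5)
Your proof is correct and follows exactly the paper's argument: first the weak Young estimate from Lemma~\ref{l:convol} reduces matters to $\|(f\wedge\kappa)\|_{L^{q_1}}^{p+1}$, then Lemma~\ref{l:interpol-weight} splits this into the three terms, and Theorem~\ref{t:moments} absorbs the weighted $L^1$ piece. Your extra care in rescaling $\eps$ to absorb $C_{d,\gamma}$ and in checking that the $q_1$ exponents from the two lemmas coincide is just making explicit the bookkeeping the paper leaves implicit.
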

\begin{proof}
  We apply Lemma~\ref{l:convol} to  the convolution product,
\begin{align*}
  \int_{\R^d} (f \wedge \kappa)^p ( (f\wedge \kappa) \ast_v |\cdot|^\gamma) \dv
  & \le  C_{d,\gamma} \| (f \wedge \kappa)^p \|_{L^{q_1/p}} \|(f \wedge \kappa) \|_{L^{q_1}} \\
  & = C_{d,\gamma}  \|(f \wedge \kappa) \|_{L^{q_1}}^{p+1} \\
  \intertext{we  apply
  Lemma~\ref{l:interpol-weight} to $(f \wedge \kappa)$  for $\eps>0$ arbitrarily small and
  $\beta$ given by \eqref{e:beta},}
  & \le C_{d,\gamma} \eps \| (f\wedge \kappa)^p\|_{L^{p_0}_{k_0}}
    + C_{d,\gamma} C_\eps \|(f \wedge \kappa) \|_{L^p}^{p \beta_\eps} + C_{d,\gamma} C_\eps \|(f \wedge \kappa)\|^{r_\eps}_{L^1_{k_\eps}} \\
  \intertext{we  use  (the propagation of) moments, recall Theorem~\ref{t:moments},}
  & \le C_{d,\gamma} \eps \| (f\wedge \kappa)^p\|_{L^{p_0}_{k_0}}
    + C_{d,\gamma} C_\eps \|(f \wedge \kappa) \|_{L^p}^{p \beta_\eps} + C_{d,\gamma} C_\eps (\Cprop{k_\eps})^{r_\eps}. \qquad \qedhere
\end{align*}
\end{proof}
\begin{lem}[Error term vs. dissipation - I]\label{l:error}
Assume that $f \colon \R^d \to \R$ has moments of any order. Then,
  \begin{align*}
    \int_{\R^d} (f \wedge \kappa)^p  \bigg( (f-\kappa)_+ \ast_v |\cdot|^\gamma \bigg) \dv
    &\le \eps  \kappa^{\frac{2sp}d} \left\|(f \wedge \kappa) \right\|^{- \frac{2s}d p}_{L^p} \int_{\R^d} \kappa^{p-1} (f-\kappa)_+(v) \langle v \rangle^{k_0} \dv \\
    &+  \tilde{C}_\eps \kappa^{-\iota} \left(\int_{\R^d} \phik (f) \dv \right)^{\tilde{\beta}} 
\end{align*}
where  $\tilde{C}_\eps$ depends on $\eps$, $c_c$, $p$, $d$, $\gamma$ and  moments of $f$.
The positive constants  $\tilde \beta$ and $\iota$ are given by,
\[
  \begin{cases}
         \tilde \beta = \frac{\alpha_p}{1-\theta_p} \\
     \iota =(p-1)\frac{1 - \frac{\theta_p}{1-\eps}}{1-\theta_p}
  \end{cases}
   \quad \text{ with } \quad \begin{cases} 
    \theta_p = \frac{-\frac{\gamma p}{d}  }{(\frac{2s}d+1)p -1} \\
     \alpha_p = 1 - \frac{\theta_p}{1-\eps} + \frac{2s}d \theta_p + (1+ \frac{\gamma}d).
    \end{cases}\]

\end{lem}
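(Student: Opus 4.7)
The goal is to bound the bilinear form
\[ A := \iint_{\R^d \times \R^d}(f\wedge\kappa)^p(v)\,(f-\kappa)_+(w)\,|v-w|^\gamma \dv \dw \]
by a sum whose first piece is comparable to the dissipation scale of Lemma~\ref{l:lower-bis} and whose second piece is a small factor times a power of $\mathcal{P} := \int\phi_\kappa(f)\dv$. Writing $\mathcal{D} := \kappa^{\frac{2sp}{d}+p-1}\|(f\wedge\kappa)\|_{L^p}^{-\frac{2sp}{d}}\|(f-\kappa)_+\|_{L^1_{k_0}}$ for the dissipation scaling, the plan is to first establish a product bound of the form
\[ A \le C \cdot \mathcal{D}^{\theta_p}\bigl(\kappa^{-\iota}\mathcal{P}^{\tilde\beta}\bigr)^{1-\theta_p}, \]
and then to split it via Young's inequality $xy \le \eps x^{1/\theta_p} + C_\eps y^{1/(1-\theta_p)}$, which yields exactly the two terms of the statement.

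To obtain the product bound, I would first apply Hölder's inequality followed by the weak Young inequality (in the spirit of Lemma~\ref{l:convol}), giving
\[ A \le C \|(f\wedge\kappa)^p\|_{L^a}\,\|(f-\kappa)_+\|_{L^b}, \qquad \tfrac{1}{a}+\tfrac{1}{b}=2+\tfrac{\gamma}{d}. \]
For the first factor, I would use the pointwise bound $(f\wedge\kappa)^p \le \kappa^{p-1}(f\wedge\kappa)$ combined with Hölder with weights (Lemma~\ref{l:holder-weight}) to extract powers of $\kappa$ and $\|(f\wedge\kappa)\|_{L^p}$. For the second factor, I would interpolate with weights between $\|(f-\kappa)_+\|_{L^1_{k_0}}$ — the norm actually appearing in the dissipation, weighted in $\theta_p$ — and a higher-order weighted norm; the latter is then controlled via the key pointwise inequality $\kappa^{p-1}(f-\kappa)_+ \le \phi_\kappa(f)$ together with propagation of moments (Theorem~\ref{t:moments}). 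The pointwise identity is precisely what produces the $\kappa^{-(p-1)}$ factor responsible for the smallness $\kappa^{-\iota}$.

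The main obstacle is the exponent book-keeping. The values $\theta_p$, $\alpha_p$, $\iota$, $\tilde\beta$ in the statement are not arbitrary: they are forced by the requirement that the product obtained in the previous step collapses exactly to $\mathcal{D}^{\theta_p}\bigl(\kappa^{-\iota}\mathcal{P}^{\tilde\beta}\bigr)^{1-\theta_p}$. The small parameter $\eps$ inside the formula for $\iota$ reflects a slight perturbation of the Hölder exponent for $(f-\kappa)_+$ away from its critical value, exactly as in the proof of Lemma~\ref{l:interpol-weight}; this perturbation is what permits Young's inequality to deliver a free prefactor $\eps$ in front of $\mathcal{D}$ without absorbing the $\kappa^{-\iota}$ gain. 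The essential quantitative verification is that $\iota > 0$ under the restriction $p > d/(d+\gamma+2s)$, so that $\kappa^{-\iota}$ is genuinely small as $\kappa \to \infty$: this is what ultimately makes the Riccati-type argument for the propagation of the approximate $L^p$-norm close.
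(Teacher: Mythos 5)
Your overall architecture matches the paper's: rewrite the integral as a bilinear pairing, derive a product bound of the form $A \le C\,\mathcal{D}^{\theta_p}\bigl(\kappa^{-\iota}\mathcal{P}^{\tilde\beta}\bigr)^{1-\theta_p}$ where $\mathcal{D}$ is exactly the dissipation scale from Lemma~\ref{l:lower-bis}, then split with Young's inequality. You also correctly identify the sources of the pieces: the duality trick moving the convolution to the other factor, the weak Young inequality, the bound $(f-\kappa)_+\le\kappa^{1-p}\phik(f)$ as the source of the gain $\kappa^{-\iota}$, and the propagation of moments to control the auxiliary weighted norm.

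There are, however, two inaccuracies worth correcting. First, your treatment of the factor $\|(f\wedge\kappa)^p\|_{L^a}$ via the pointwise bound $(f\wedge\kappa)^p\le\kappa^{p-1}(f\wedge\kappa)$ would not reproduce the exponent $\theta_p$ of the statement. That pointwise bound extracts $\kappa^{p-1}$ and leaves you to interpolate $\|(f\wedge\kappa)\|_{L^{d/(d+\gamma)}}$ between $L^1$ and $L^p$, which yields a different power of $\kappa$ (namely $p-1$) and a different power of $\|(f\wedge\kappa)\|_{L^p}$. What the paper does instead is interpolate $(f\wedge\kappa)^p$ directly between $L^\infty$ and $L^1$ at exponent $\tfrac{d}{d+\gamma}$, giving
\[
  \|(f\wedge\kappa)^p\|_{L^{\frac{d}{d+\gamma}}}\le \kappa^{-\frac{\gamma p}{d}}\,\|(f\wedge\kappa)\|_{L^p}^{p(1+\frac{\gamma}{d})},
\]
and the exponent $-\gamma p/d$ in $\kappa$ is precisely what forces $\theta_p=\frac{-\gamma p/d}{(\frac{2s}{d}+1)p-1}$ when you match the $\kappa$-power against $\mathcal{D}^{\theta_p}\sim\kappa^{\theta_p[(\frac{2s}{d}+1)p-1]}$. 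If you take $\kappa^{p-1}$ from the first factor you would get a different $\theta_p$ and the final constants would not match the statement. The $\kappa^{\pm(p-1)}$ factors belong to the error factor $(f-\kappa)_+$, not to $(f\wedge\kappa)^p$.

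Second, your description of the role of the $\eps$-perturbation is backward. The $(1-\eps)$ exponent in the formula for $\iota$ does not come from Young's inequality; the small prefactor $\eps$ in front of $\mathcal{D}$ is a free parameter in the standard $a^{\theta}b^{1-\theta}\le\eps a+C_\eps b$ and needs no perturbation. The $(1-\eps)$ appears earlier: one writes
\[
  \int_{\R^d}(f-\kappa)_+\dv\le \Bigl(\int_{\R^d}(f-\kappa)_+\langle v\rangle^{k_0}\dv\Bigr)^{1-\eps}\Bigl(\int_{\R^d}f\langle v\rangle^{\frac{\eps-1}{\eps}k_0}\dv\Bigr)^{\eps}
\]
so that the second factor is a high moment (controllable by Theorem~\ref{t:moments}) rather than the $L^1_{-k_0}$ norm with $-k_0>0$, which would not be bounded without $\eps>0$. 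That is why $\eps$ propagates into $\iota$ and $\alpha_p$.

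Both points are fixable without changing your plan, and your reduction to a product decomposition followed by Young's inequality is exactly the paper's strategy.
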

\begin{proof}
We have 
\begin{align*}
  \int_{\R^d} (f \wedge \kappa)^p  \bigg( (f-\kappa)_+ \ast_v |\cdot|^\gamma \bigg) \dv
  &= \int_{\R^d} \bigg((f \wedge \kappa)^p  \ast_v |\cdot|^\gamma \bigg) (f-\kappa)_+ \dv \\
  & \le  \left\|(f \wedge \kappa)^p  \ast_v |\cdot|^\gamma \right\|_{L^\infty(\R^d)} \int_{\R^d} (f-\kappa)_+ \dv.
\end{align*}
On the one hand, using the weak Young inequality and interpolation in Lebesgue spaces, we have
\begin{align}
\nonumber
  \left\|(f \wedge \kappa)^p  \ast_v |\cdot|^\gamma \right\|_{L^\infty(\R^d)} 
& \le C_{d,\gamma} \left\|(f \wedge \kappa)^p \right\|_{L^{\frac{d}{d+\gamma}}}   \\
\nonumber  & \le C_{d,\gamma} \left\|(f \wedge \kappa)^p \right\|_{L^\infty}^{-\frac{\gamma}d} \left\|(f \wedge \kappa)^p \right\|^{1+\frac{\gamma}d}_{L^1}  \\
\label{e:power-k}  & \le C_{d,\gamma} \kappa^{-\frac{\gamma p}d} \left\|(f \wedge \kappa) \right\|^{p(1+\frac{\gamma}d)}_{L^p}  .
\end{align}
We recall that $C_{d,\gamma}$ is given in Remark~\ref{r:weak-young}.
On the other hand, for any $\eps \in (0,1)$ and $k_0 >0$ appearing in the second entropy dissipation estimate, see Lemma~\ref{l:lower-bis}, we have,
\begin{align*}
  \int_{\R^d} (f-\kappa)_+ \dv & = \int_{\R^d} (f-\kappa)_+^{1-\eps}(v) \langle v \rangle^{(1-\eps) k_0} (f-\kappa)_+^{\eps}(v)  \langle v \rangle^{(\eps-1) k_0}  \dv \\
  & \le \left(\int_{\R^d} (f-\kappa)_+(v) \langle v \rangle^{k_0} \dv \right)^{1-\eps} \left(\int_{\R^d} f(v)\langle v \rangle^{\frac{\eps-1}{\eps}k_0} \dv \right)^\eps.
\end{align*}
We can now use (the propagation of) moments, recall Theorem~\ref{t:moments}, to get for all $\eps \in (0,1)$, 
\[   \int_{\R^d} (f-\kappa)_+ \dv  \le(\Cprop{(1-\eps^{-1})k_0})^\eps \left(\int_{\R^d} (f-\kappa)_+(v) \langle v \rangle^{k_0} \dv \right)^{1-\eps} .\]

For $p > \frac{d}{d+\gamma+2s} >1$, we consider
\[ \theta_p = \frac{-\frac{\gamma p}{d}  }{(\frac{2s}d+1)p -1} \in (0,1).\]
Our goal is to make appear the second lower bound for the entropy dissipation estimate obtained  in Lemma~\ref{l:lower-bis}. It will appear with a power $\theta_p$: it is enough to compare
the exponents for $\kappa$ in \eqref{e:power-k} and in the lower bound in Lemma~\ref{l:lower-bis}  to see it.

Coming back to the estimate of the error term, we now write precisely,
\begin{align*}
 c_c (p-1) \int_{\R^d} & (f \wedge \kappa)^p  \bigg( (f-\kappa)_+ \ast_v |\cdot|^\gamma \bigg) \dv \\
   \le C_4 &  \kappa^{-\frac{\gamma p}d} \left\|(f \wedge \kappa) \right\|^{p(1+\frac{\gamma}d)}_{L^p}
  \left(\int_{\R^d} (f-\kappa)_+(v) \langle v \rangle^{k_0} \dv \right)^{\theta_p}   \left(\int_{\R^d} (f-\kappa)_+(v)  \dv \right)^{1-\frac{\theta_p}{1-\eps}} \\
\intertext{with $C_4 = c_c (p-1) (\Cprop{(1-\eps^{-1})k_0})^{\frac{\eps \theta_p}{1-\eps}} C_{d,\gamma}$,}
  \le C_4 & \left( \kappa^{(\frac{2s}d+1)p-1} \left\|(f \wedge \kappa) \right\|^{- \frac{2s}d p}_{L^p} \int_{\R^d} (f-\kappa)_+(v) \langle v \rangle^{k_0} \dv \right)^{\theta_p} \\
          & \times  \left\|(f \wedge \kappa) \right\|^{p [ (\frac{2s}d ) \theta_p + (1+\frac{\gamma}d) ]}_{L^p} \left(\int_{\R^d} (f-\kappa)_+(v)  \dv \right)^{1 - \frac{\theta_p}{1-\eps}}. 
\end{align*}
We now use the fact that $(f \wedge \kappa)^p \le \phik (f)$ and $(f-\kappa)_+ \le \kappa^{1-p} \phik (f)$ (we recall that $p \ge 1$) and get for
\[ \alpha_p = \left[\frac{2s}d \theta_p + (1+ \frac{\gamma}d) \right] + 1 - \frac{\theta_p}{1-\eps},\]
the following inequality,
\begin{align*}
c_c   & (p-1) \int_{\R^d}   (f \wedge \kappa)^p  \bigg( (f-\kappa)_+ \ast_v |\cdot|^\gamma \bigg) \dv \\
  \le & C_4 \kappa^{(1-p)(1 - \frac{\theta_p}{1-\eps})} \left( \kappa^{(\frac{2s}d+1)p-1} \left\|(f \wedge \kappa) \right\|^{- \frac{2s}d p}_{L^p} \int_{\R^d} (f-\kappa)_+(v) \langle v \rangle^{k_0} \dv \right)^{\theta_p}
            \left(\int_{\R^d} \phik (f) (v) \dv \right)^{\alpha_p}  \\
  \le & \eps  \kappa^{(\frac{2s}d+1)p-1} \left\|(f \wedge \kappa) \right\|^{- \frac{2s}d p}_{L^p} \int_{\R^d} (f-\kappa)_+(v) \langle v \rangle^{k_0} \dv
        +  C_5 \kappa^{(1-p)\frac{1 - \frac{\theta_p}{1-\eps}}{1-\theta_p}} \left(\int_{\R^d} \phik (f) (v) \dv \right)^{\frac{\alpha_p}{1-\theta_p}} 
\end{align*}
with $C_5= C_4^{\frac1{1-\theta_p}} \eps^{-\frac{\theta_p}{1-\theta_p}}$.
\end{proof}

We need a variation of the previous lemma in the proof of Theorem~\ref{t:prodi-serrin}.
\begin{lem}[Error term vs. dissipation - II]\label{l:error-II}
Assume that $f \colon \R^d \to \R$ is such that $\|f\|_{L^p} \le C_0$ for $p \in \left( \frac{d}{d+\gamma+2s},\frac{d}{d+\gamma}\right)$ and $F$ has moments of any order. Then,
  \begin{align*}
    \int_{\R^d} (f \wedge a) (F \wedge \kappa)^{p-1}  \bigg( (F-\kappa)_+ \ast_v |\cdot|^\gamma \bigg) \dv
    &\le \eps  \kappa^{\frac{2sp}d} \left\|(F \wedge \kappa) \right\|^{- \frac{2s}d p}_{L^p} \int_{\R^d} \kappa^{p-1} (F-\kappa)_+(v) \langle v \rangle^{k_0} \dv \\
    &+  \tilde{C}_\eps \kappa^{-\iota} \left(\int_{\R^d} \phik (F) \dv \right)^{\bar{\beta}} 
\end{align*}
where  $\tilde{C}_\eps$ depends on $\eps$, $c_c$, $p$, $d$, $\gamma$, $C_0$ and  moments of $f$.
The positive constants  $\bar \beta$ and $\bar \iota$ are given by,
\[
  \begin{cases}
         \bar \beta = \frac{\bar \alpha_p}{1-\theta_p} \\
     \iota =(p-1)\frac{1 - \frac{\theta_p}{1-\eps}}{1-\theta_p}
  \end{cases}
   \quad \text{ with } \quad \begin{cases} 
    \theta_p = \frac{-\frac{\gamma p}{d}  }{(\frac{2s}d+1)p -1} \\
     \bar \alpha_p = 1 - \frac{\theta_p}{1-\eps} + \frac{2s}d \theta_p + \left( 1 +\frac{\gamma}d - \frac1p\right).
    \end{cases}\]
\end{lem}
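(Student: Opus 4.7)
The plan is to mimic the proof of Lemma~\ref{l:error} with a single modification that accounts for the different structure of the integrand: the factor $(f\wedge a)(F\wedge\kappa)^{p-1}$ replaces $(f\wedge\kappa)^p=(f\wedge\kappa)(f\wedge\kappa)^{p-1}$, so that one $(f\wedge\kappa)^{p-1}$ becomes $(F\wedge\kappa)^{p-1}$ (an essentially cosmetic relabeling) while the remaining $(f\wedge\kappa)$ is downgraded to $(f\wedge a)$, whose $L^p$-norm is controlled by the assumed bound $\|f\|_{L^p}\le C_0$. This will replace one power of $\|f\wedge\kappa\|_{L^p}$ by the constant $C_0$ and produce the $-1/p$ shift from $\alpha_p$ to $\bar\alpha_p$ in the final exponent.

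Concretely, I will first swap the convolution with $|\cdot|^\gamma$ onto the other factor of the integrand and apply the $L^\infty\times L^1$ H\"older inequality to factor out $\|(F-\kappa)_+\|_{L^1}$. I will then bound $\bigl\|(f\wedge a)(F\wedge\kappa)^{p-1}\ast|\cdot|^\gamma\bigr\|_{L^\infty}$ via the weak Young inequality, producing $\bigl\|(f\wedge a)(F\wedge\kappa)^{p-1}\bigr\|_{L^{d/(d+\gamma)}}$, and interpolate this norm between $L^1$ and $L^\infty$. For the $L^1$ factor, the key new input is the H\"older estimate
\[
\bigl\|(f\wedge a)(F\wedge\kappa)^{p-1}\bigr\|_{L^1}\le \|f\wedge a\|_{L^p}\|F\wedge\kappa\|_{L^p}^{p-1}\le C_0\|F\wedge\kappa\|_{L^p}^{p-1},
\]
which carries one power less of $\|F\wedge\kappa\|_{L^p}$ than the analogous step in Lemma~\ref{l:error}. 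For the $L^\infty$ factor, I will use $(f\wedge a)(F\wedge\kappa)^{p-1}\le a\kappa^{p-1}$ and absorb the $a$-contribution into the constant $\tilde{C}_\eps$; in the applications to Theorem~\ref{t:prodi-serrin} one has $a\le\kappa$, so this produces the same $\kappa^{-p\gamma/d}$ prefactor as in Lemma~\ref{l:error}.

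From this point the argument follows Lemma~\ref{l:error} verbatim: use Theorem~\ref{t:moments} together with weighted H\"older to write $\|(F-\kappa)_+\|_{L^1}$ as the product of $(\Cprop{(1-\eps^{-1})k_0})^{\eps\theta_p/(1-\eps)}\|(F-\kappa)_+\|_{L^1_{k_0}}^{\theta_p}$ and $\|(F-\kappa)_+\|_{L^1}^{1-\theta_p/(1-\eps)}$, with the same $\theta_p = (-\gamma p/d)/((2s/d+1)p-1)\in(0,1)$; group the resulting expression as $A^{\theta_p}B^{1-\theta_p}$ with $A$ chosen to match exactly the right-hand side of the dissipation estimate from Lemma~\ref{l:lower-bis}; apply Young's inequality $A^{\theta_p}B^{1-\theta_p}\le\eps A+C_\eps B^{1/(1-\theta_p)}$ (the defining identity $\theta_p((2s/d+1)p-1)=-p\gamma/d$ makes the $\kappa$-powers in $A$ match exactly); and finally use the pointwise bounds $(F\wedge\kappa)^p\le\phik(F)$ and $p\kappa^{p-1}(F-\kappa)_+\le\phik(F)$ to convert the leftover $\|F\wedge\kappa\|_{L^p}$ and $\|(F-\kappa)_+\|_{L^1}$ factors into powers of $\int\phik(F)\dv$.

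I expect the main obstacle to be the correct bookkeeping of exponents in the H\"older step above, together with the unavoidable use of the $L^\infty$-$L^1$ interpolation on the product $(f\wedge a)(F\wedge\kappa)^{p-1}$: indeed, for $p<d/(d+\gamma)$ one has $(d+\gamma)/d<1/p$, so a direct H\"older split of the $L^{d/(d+\gamma)}$-norm as $\|f\|_{L^p}\cdot\|(F\wedge\kappa)^{p-1}\|_{L^q}$ would require the Lebesgue exponent $q$ to satisfy $1/q=(d+\gamma)/d-1/p<0$, which is incompatible with the hypothesis. Once the H\"older estimate of Step~2 is set correctly, the rest is a purely algebraic update of the computation in Lemma~\ref{l:error} and produces $\bar\alpha_p=\alpha_p-1/p$ together with the same $\bar\iota=(p-1)(1-\theta_p/(1-\eps))/(1-\theta_p)$ and $\bar\beta=\bar\alpha_p/(1-\theta_p)$ as announced.
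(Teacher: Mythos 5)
Your observation about the H\"older step is sharp and correct, and in fact points at a genuine gap in the paper's own proof. With $\frac1{\bar p}=1+\frac{\gamma}{d}-\frac1p$ and $p<\frac{d}{d+\gamma}$, one has $\frac1{\bar p}<0$, so the split $\|(f\wedge a)(F\wedge\kappa)^{p-1}\|_{L^{d/(d+\gamma)}}\le\|f\wedge a\|_{L^p}\|(F\wedge\kappa)^{p-1}\|_{L^{\bar p}}$ that the paper writes down is not a valid instance of H\"older's inequality, and the subsequent interpolation parameter $\theta=\frac{p}{\bar p(p-1)}$ is negative. Worse, the resulting intermediate estimate
$\|(f\wedge a)(F\wedge\kappa)^{p-1}\|_{L^{d/(d+\gamma)}}\le C_0\kappa^{-\gamma p/d}\|F\wedge\kappa\|_{L^p}^{p(1+\gamma/d)-1}$
is actually false in general: taking $f\wedge a=a\un_{B_\delta}$ and $F\wedge\kappa=\kappa\un_{B_R}$ with $\delta<R\to\infty$ makes the left side constant in $R$ while the right side, whose $\|F\wedge\kappa\|_{L^p}$-exponent $p(1+\gamma/d)-1$ is negative, tends to zero. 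So you were right to be suspicious; the paper's proof of this lemma as written does not work.

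Your replacement — interpolate $\|(f\wedge a)(F\wedge\kappa)^{p-1}\|_{L^{d/(d+\gamma)}}$ between $L^1$ and $L^\infty$, using $\|(f\wedge a)(F\wedge\kappa)^{p-1}\|_{L^1}\le C_0\|F\wedge\kappa\|_{L^p}^{p-1}$ and $\|(f\wedge a)(F\wedge\kappa)^{p-1}\|_{L^\infty}\le a\kappa^{p-1}$ — is sound. But it does not reproduce the stated lemma, and you have not noticed this. Two things change. First, the $L^\infty$ bound imports a factor $a^{-\gamma/d}$ into the constant, which does not appear on the right-hand side of the lemma as stated; invoking $a\le\kappa$ is not a hypothesis of this lemma (it is only true in the downstream use inside Theorem~\ref{t:prodi-serrin}), so strictly you are proving a variant with an $a$-dependent constant. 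Second, your exponent of $\|F\wedge\kappa\|_{L^p}$ comes out as $(p-1)(1+\gamma/d)$, not the paper's $p(1+\gamma/d)-1$; these differ by $-\gamma/d$. Carrying that difference through the $\theta_p$-peel and the substitution $(F\wedge\kappa)^p\le\phik(F)$, you land on $\bar\alpha_p'=\bar\alpha_p-\frac{\gamma}{dp}$, i.e.\ strictly larger than the announced $\bar\alpha_p=\alpha_p-\frac1p$. Your closing claim that the computation ``produces $\bar\alpha_p=\alpha_p-1/p$'' is therefore a bookkeeping error.

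That said, neither discrepancy damages the application: inside the proof of Theorem~\ref{t:prodi-serrin} one always has $a(t)\le\kappa$ for the relevant range of $\kappa$, and one only uses the crude bound $\bigl(\int\phik((f-a)_+)\dv\bigr)^{\bar\beta}\le C_0^{p\bar\beta}$, so the precise value of $\bar\beta>0$ is irrelevant. So your version of the lemma, with the $a$-factor made explicit and the corrected exponent $\bar\alpha_p'$, is a perfectly serviceable substitute — but it is not the same statement, and you should state it honestly as such rather than assert that it matches the paper's exponents.
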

\begin{proof}
We have 
\begin{align*}
  \int_{\R^d} (f \wedge a) (F \wedge \kappa)^{p-1}  \bigg( (F-\kappa)_+ \ast_v |\cdot|^\gamma \bigg) \dv
  &= \int_{\R^d} \bigg((f \wedge a) (F \wedge \kappa)^{p-1}  \ast_v |\cdot|^\gamma \bigg) (f-\kappa)_+ \dv \\
  & \le  \left\|(f \wedge a) (F \wedge \kappa)^{p-1}  \ast_v |\cdot|^\gamma \right\|_{L^\infty(\R^d)} \int_{\R^d} (f-\kappa)_+ \dv.
\end{align*}
On the one hand, using the weak Young inequality and interpolation in Lebesgue spaces, we have
\begin{align*}
\nonumber
  \left\| (f \wedge a) (F \wedge \kappa)^{p-1} \ast_v |\cdot|^\gamma \right\|_{L^\infty(\R^d)} 
& \le C_{d,\gamma} \left\|(f \wedge a) (F \wedge \kappa)^{p-1} \right\|_{L^{\frac{d}{d+\gamma}}}   \\
& \le C_{d,\gamma} \|(f \wedge a)\|_{L^p} \left\| (F \wedge \kappa)^{p-1} \right\|_{L^{\bar p}}   \\
  \intertext{with $\frac1{\bar p} = 1 + \frac{\gamma}d - \frac1p$, we then use the assumption on $f$ and interpolation,}
  \nonumber  & \le C_{d,\gamma} C_0 \left\|(F \wedge \kappa)^{p-1} \right\|_{L^\infty}^{1-\theta} \left\|(F \wedge \kappa)^{p-1} \right\|^{\theta}_{L^{\frac{p}{p-1}}}  \\
  \intertext{with $\theta = \frac{p}{\bar p (p-1)}$, in particular $(p-1)(1-\theta) = - \frac{\gamma}d p$,}
  & \le C_{d,\gamma} C_0 \kappa^{-\frac{\gamma p}d} \left\|(F \wedge \kappa) \right\|^{\frac{p}{\bar p}}_{L^p}  .
\end{align*}
Then the proof is exactly the same as the one of the previous lemma. In particular, since the exponent in $\kappa$ in
the last inequality coincides with the one appearing in the previous proof, the exponent $\theta_p$ is unchanged (and in
turn $\iota$) is unchanged. 
\end{proof}

\subsection{Dissipation of approximate $L^p$ norms}

We can now combine the two previous lemmas with the coercivity estimates  established in the previous section in order to get
the following key estimate.
\begin{lem}[Source and error terms vs. dissipation]\label{e:key-estim}
Let $f \colon \R^d \to \R$ with moments of any order. Then,
\begin{multline*}
  c_c \int_{\R^d} \Phik (f) (f \ast_v |\cdot|^\gamma) \dv - \int_{\R^d} d_{\phik} (f,f') K (v,v') \dv \dv' \\
\le  C_3 + C_3  \int_{\R^d} \phik (f) \dv  + C_3 \left( \int_{\R^d} \phik (f) \dv \right)^{\beta_\eps}
  + C_3 \kappa^{-\iota} \left(\int_{\R^d} \phik (f) \dv \right)^{\tilde \beta}
\end{multline*}
with $\beta = 1 + \frac{2s}{p(d+\gamma+2s)-d} >1$ and $\tilde \beta>0$ and $\iota>0$ given by,
\[
  \begin{cases}
         \tilde \beta = \frac{\alpha_p}{1-\theta_p} \\
     \iota =(p-1)\frac{1 - \frac{\theta_p}{1-\eps}}{1-\theta_p}
  \end{cases}
   \quad \text{ with } \quad \begin{cases} 
    \theta_p = \frac{-\frac{\gamma p}{d}  }{(\frac{2s}d+1)p -1} \\
     \alpha_p = 1 - \frac{\theta_p}{1-\eps} + \frac{2s}d \theta_p + (1+ \frac{\gamma}d).
    \end{cases}\]
  \end{lem}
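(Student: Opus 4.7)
The strategy is to split the source integral according to the decomposition $f = (f\wedge\kappa) + (f-\kappa)_+$ and to simultaneously split the dissipation into two equal halves so that one half absorbs the ``leading'' source term treated by Lemma~\ref{l:leading} while the other half absorbs the ``error'' source term treated by Lemma~\ref{l:error}. Concretely, since \eqref{e:Phi_k} gives $\Phi_\kappa(f)=(p-1)(f\wedge\kappa)^p$, I would write
\[
c_c \int_{\R^d}\Phi_\kappa(f)(f\ast_v|\cdot|^\gamma)\dv
= c_c(p-1)\!\int_{\R^d}\!(f\wedge\kappa)^p\bigl((f\wedge\kappa)\ast_v|\cdot|^\gamma\bigr)\dv
+ c_c(p-1)\!\int_{\R^d}\!(f\wedge\kappa)^p\bigl((f-\kappa)_+\ast_v|\cdot|^\gamma\bigr)\dv,
\]
and bound the first term by Lemma~\ref{l:leading} and the second by Lemma~\ref{l:error}.

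The dissipation is handled by the convex combination
\[
\iint d_{\phik}(f,f')K(v,v')\dv\dv'
= \tfrac12 \iint d_{\phik}(f,f')K(v,v')\dv\dv' + \tfrac12 \iint d_{\phik}(f,f')K(v,v')\dv\dv',
\]
applying Lemma~\ref{l:lower} to the first half and Lemma~\ref{l:lower-bis} to the second half (assuming $\kappa\geq\kappa_0$ as needed; for smaller $\kappa$ the trivial bound $\phi_\kappa(f)\le \kappa^p$ lets us absorb everything into $C_3$). This produces on the left-hand side, after bringing the dissipation across, two negative quantities: a multiple of $\|(f\wedge\kappa)^p\|_{L^{p_0}_{k_0}}$ and a multiple of $\kappa^{\frac{2sp}{d}}\|(f\wedge\kappa)\|_{L^p}^{-\frac{2sp}{d}}\!\int \kappa^{p-1}(f-\kappa)_+\langle v\rangle^{k_0}\dv$. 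Choosing $\eps$ sufficiently small in both Lemma~\ref{l:leading} and Lemma~\ref{l:error} then absorbs the positive $\eps$-terms produced by these two lemmas into these negative quantities.

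After absorption, the leftover positive terms are: the constant $c_c(p-1)\bar C_\eps(\Cprop{k_\eps})^{r_\eps}$, the polynomial term $c_c(p-1)\bar C_\eps\|(f\wedge\kappa)\|_{L^p}^{p\beta_\eps}$, the error remainder $\tfrac12 C_{1,D}\int(f\wedge\kappa)^p\langle v\rangle^\gamma\dv$ coming from Lemma~\ref{l:lower}, and the $\kappa^{-\iota}$ term from Lemma~\ref{l:error}. I would then convert each of these into the desired form by using the pointwise inequality $(f\wedge\kappa)^p\le \phi_\kappa(f)$ (from the definition of $\phi_\kappa$ in \S\ref{ss:phik}) together with $\langle v\rangle^\gamma\le 1$ since $\gamma<0$: this turns the polynomial term into a multiple of $(\int\phi_\kappa(f)\dv)^{\beta_\eps}$ and the error remainder into a multiple of $\int\phi_\kappa(f)\dv$. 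Taking $C_3$ to be the largest of all constants obtained along the way completes the proof.

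The main obstacle is the bookkeeping of the two absorption arguments: the coefficients and exponents produced by Lemmas~\ref{l:leading} and~\ref{l:error} must match the structures appearing in Lemmas~\ref{l:lower} and~\ref{l:lower-bis} respectively, with the same $k_0$, the same $\kappa$-exponent $\frac{2sp}{d}$ and the same $\|(f\wedge\kappa)\|_{L^p}$-exponent $-\frac{2sp}{d}$. These were arranged deliberately in the statements of the underlying lemmas, so the matching is exact and the absorption is clean; the reader mainly needs to verify that the $\eps$-chain closes and that every residual term fits into one of the four slots $1$, $\int\phi_\kappa(f)\dv$, $(\int\phi_\kappa(f)\dv)^{\beta_\eps}$, or $\kappa^{-\iota}(\int\phi_\kappa(f)\dv)^{\tilde\beta}$ on the right-hand side.
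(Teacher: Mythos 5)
Your argument mirrors the paper's proof exactly: the same decomposition $f=(f\wedge\kappa)+(f-\kappa)_+$ inside the convolution, the identity $\Phi_\kappa(f)=(p-1)(f\wedge\kappa)^p$, Lemmas~\ref{l:leading} and~\ref{l:error} for the two source pieces, Lemmas~\ref{l:lower} and~\ref{l:lower-bis} (applied to halves of the dissipation) to absorb the $\eps$-terms, and the final $(f\wedge\kappa)^p\le\phi_\kappa(f)$, $\langle v\rangle^\gamma\le1$ bookkeeping. The only slip is the parenthetical remark that $\phi_\kappa(f)\le\kappa^p$ handles $\kappa<\kappa_0$: this is false, since $\phi_\kappa(f)=\kappa^p+p\kappa^{p-1}(f-\kappa)$ grows unboundedly for $f>\kappa$; the lemma is simply stated under the standing hypothesis $\kappa\ge\kappa_0$ (consistent with its use in Lemma~\ref{l:evol-lp}), which is what your main line of reasoning already assumes.
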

\begin{proof}
We split $f = (f-\kappa)_+ + f \wedge \kappa$ and write,
\begin{align*}
  c_c \int_{\R^d} \Phik (f) (f \ast_v |\cdot|^\gamma) \dv & \le c_c  \int_{\R^d} \Phik (f) ( (f\wedge \kappa) \ast_v |\cdot|^\gamma) \dv + c_c \int_{\R^d} \Phik (f) ( (f-\kappa)_+ \ast_v |\cdot|^\gamma) \dv \\
\intertext{we now use the fact that $\Phik(f)=(p-1)(f\wedge \kappa)^p$ and apply Lemmas~\ref{l:leading} and \ref{l:error},}
                                                          & \le  c_c (p-1) \left(\eps \| (f\wedge \kappa)^p\|_{L^{p_0}_{k_0}}   +  C_\eps \|(f \wedge \kappa) \|_{L^p}^{p \beta_\eps} + C_\eps (\Cprop{k_\eps})^{r_\eps} \right)\\
  & + c_c (p-1) \eps  \kappa^{\frac{2sp}d} \left\|(f \wedge \kappa) \right\|^{- \frac{2s}d p}_{L^p} \int_{\R^d} \kappa^{p-1} (f-\kappa)_+(v) \langle v \rangle^{-k_0} \dv \\
                                                          &+ c_c (p-1) \bar{C}_\eps \kappa^{-\iota} \left(\int_{\R^d} \phik (f) (v) \dv \right)^{\tilde \beta}.
\end{align*}
We identify in the right-hand side a constant term, and we use that $(f\wedge \kappa)^p \le \phik (f)$ in order to estimate $\|(f \wedge \kappa) \|_{L^p}^{p \beta_\eps}$ by $\left( \int_{\R^d} \phik (f(t,v)) \dv \right)^{\beta_\eps}$. Then we use next Lemmas~\ref{l:lower} and \ref{l:lower-bis} and get the desired inequality for some constant $C_3$.
\end{proof}
\subsection{Propagation of approximate $L^p$ norms}

We consider the function $\phik$ introduced in Subsection~\ref{ss:phik} 
with $p \in \left(\frac{d}{d+\gamma+2s},\frac{d}{d+\gamma} \right)$ and $\kappa \ge 1$. 
\begin{lem}[Evolution of approximate $L^p$ norms]\label{l:evol-lp}
Let $f \colon (0,T) \times \R^d \to (0,+\infty)$ be a suitable weak solution of the Boltzmann equation.
  Let $p \in \left(\frac{d}{d+\gamma+2s},\frac{d}{d+\gamma} \right)$ and $\eps \in (0,1)$ and $\kappa \ge \kappa_0$ with
  $\kappa_0 \ge 1$ given by Lemma~\ref{l:lower-bis}. Then,
\[
  \frac{d}{dt} \int_{\R^d} \phik (f(t,v)) \dv \le \Cevol \left( \int_{\R^d} \phik (f(t,v)) \dv \right)^{\beta_\eps}
  + \Cevol \; \kappa^{-\iota} \left( \int_{\R^d} \phik (f(t,v)) \dv \right)^{\tilde \beta} 
  \text{ in } \mathcal{D}'((0,T)),
\]
where the constants $\Cevol, \beta, \tilde \beta, \iota$  depend only on $d,p,\gamma,s,\eps$ and the hydrodynamical bounds. In particular, it
does not depend on $\kappa \ge 1$. In particular, $\beta = 1 + \frac{2s}{p(d+\gamma+2s)-d} >1$.
\end{lem}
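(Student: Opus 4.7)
The plan is to upgrade the Kruzhkov-entropy inequality of Definition~\ref{d:suitable} to an inequality for $\int \phik(f) \dv$ via the superposition formula~\eqref{e:convex-rep}, and then to close with Lemma~\ref{e:key-estim}. Since $p > 1$, the second derivative $\phik''(a) = p(p-1) a^{p-2} \un_{\{a \le \kappa\}}$ defines a finite non-negative Borel measure on $(0,+\infty)$, and the normalization $\phik(0) = \phik'(0) = 0$ together with~\eqref{e:convex-rep} yields
\[ \phik(r) = \int_0^{+\infty} \phik''(a) (r-a)_+ \dd a, \]
with parallel non-negative representations $\phik'(r) = \int_0^{+\infty} \phik''(a) \un_{\{r>a\}} \dd a$, $d_{\phik}(r,\rho) = \int_0^{+\infty} \phik''(a) d_{\varphi_a}(r,\rho) \dd a$ and $\Phik(r) = r\phik'(r) - \phik(r) = \int_0^{+\infty} \phik''(a) \Phi_a(r) \dd a$.

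Testing the distributional inequality of Definition~\ref{d:suitable} against a non-negative $\psi \in C^\infty_c((0,T))$, multiplying by $\phik''(a)\dd a$, and applying Fubini (legitimate by non-negativity of the integrands and the compact $a$-support of $\phik''$), I obtain
\[ \frac{d}{dt} \int_{\R^d} \phik(f) \dv + \iint_{\R^d \times \R^d} d_{\phik}(f,f') K(v,v') \dv \dv' \le c_c \int_{\R^d} \Phik(f)(f \ast_v |\cdot|^\gamma) \dv \]
in $\mathcal{D}'((0,T))$. Rearranging and applying Lemma~\ref{e:key-estim} immediately yields the four-term bound
\[ \frac{d}{dt} \int \phik(f) \dv \le C_3 + C_3 \int \phik(f) \dv + C_3 \Bigl( \int \phik(f) \dv \Bigr)^{\beta_\eps} + C_3 \kappa^{-\iota} \Bigl( \int \phik(f) \dv \Bigr)^{\tilde\beta}. \]

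To reach the announced two-term form, I absorb the constant and linear terms into the superlinear $(\int\phik(f))^{\beta_\eps}$ via a uniform-in-time lower bound $X(t) := \int \phik(f(t,\cdot)) \dv \ge c_0 > 0$, with $c_0$ depending only on the hydrodynamical bounds and $p$, and crucially independent of $\kappa \ge 1$. Such a bound follows by choosing $R_0$ via Chebyshev so that $\int_{B_{R_0}} f(t,v)\dv \ge m_0/2$ uniformly in $t$ (using the conserved mass and energy), then applying Jensen's inequality to the convex function $\phik$:
\[ X(t) \ge |B_{R_0}|\, \phik\!\left(\frac{m_0}{2|B_{R_0}|}\right) =: c_0 > 0. \]
Since $\beta_\eps > 1$, the estimates $C_3 \le (C_3/c_0^{\beta_\eps}) X^{\beta_\eps}$ and $C_3 X \le (C_3/c_0^{\beta_\eps-1}) X^{\beta_\eps}$ hold on $\{X \ge c_0\}$, and renaming the resulting constant as $\Cevol$ delivers the claimed Riccati-type inequality.

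The main technical obstacle is the superposition step itself: one must verify that integration in $a$ against $\phik''(a)\dd a$ commutes with the distributional time derivative. The strategy sketched above---pair first with a time test function $\psi$, swap the $t$- and $a$-integrations by Fubini via non-negativity and compactness of the $a$-support---should resolve this cleanly, provided the joint measurability in $a$ of each functional appearing in Definition~\ref{d:suitable} is checked, which holds since all integrands are explicit and either continuous or monotone in $a$.
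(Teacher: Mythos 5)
Your proposal is correct and follows essentially the same route as the paper's proof. Two minor points of difference worth noting: first, you are more careful than the paper about the superposition step (integrating the Kruzhkov-type inequality against $\phik''(a)\dd a$ and justifying Fubini), which the paper simply asserts follows from Definition~\ref{d:suitable}; second, for the uniform-in-time lower bound on $\int\phik(f)\dv$, you use Chebyshev plus Jensen (relying only on conserved mass and energy), whereas the paper invokes Lemma~\ref{l:pw-lower-bound} to get $\ell_0^p a_0 \le \int\phik(f)\dv$ directly (relying also on the entropy bound); both yield a $\kappa$-independent lower bound (for yours, one should note that $\phik\bigl(m_0/(2|B_{R_0}|)\bigr)\ge \min\bigl(m_0/(2|B_{R_0}|),1\bigr)^p$ since $\kappa\ge 1$), and both absorb the constant and linear terms into the $\beta_\eps$-power term in the same way.
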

\begin{proof}
 Since $f$ is a suitable weak solution of the Boltzmann equation in the sense of Definition~\ref{d:suitable}, we conclude
that it satisfies in $\mathcal{D}'((0,T))$, 
\[
  \frac{d}{dt} \int_{\R^d} \phik (f(t,v)) \dv + \int_{\R^d} d_{\phik} (f,f') K (v,v') \dv \dv' \le c_c \int_{\R^d} \Phik (f) (f \ast_v |\cdot|^\gamma) \dv.
\]
We can apply next Lemma~\ref{e:key-estim} and get,
\begin{multline*}
  \frac{d}{dt} \int_{\R^d} \phik (f(t,v)) \dv  \\ \le   C_3 + C_3  \int_{\R^d} \phik (f(t,v)) \dv  + C_3 \left( \int_{\R^d} \phik (f(t,v)) \dv \right)^{\beta_\eps}
  + C_3 \kappa^{-\iota} \left(\int_{\R^d} \phik (f) (v) \dv \right)^{\tilde \beta}.
\end{multline*}
We use the pointwise lower bound on $f$ given by Lemma~\ref{l:pw-lower-bound}.
We recall that $\phik(f) \ge (f \wedge \kappa)^p$ with $\kappa \ge 1$ and $\ell_0 \in (0,1)$ and we get,
\[ \ell_0^{p} a_0 \le   \int_{\R^d} \phik (f(t,v)) \dv \]
where $a_0>0$ comes from Lemma~\ref{l:pw-lower-bound}. 
This yields the result with $\Cevol$ given by
\[ \Cevol = 3 C_3 \max ( \left(\ell_0^{p} |B_{R_0}|\right)^{-\beta_\eps},\left(\ell_0^{p} |B_{R_0}|\right)^{1-\beta_\eps}, 1). \qedhere \]
\end{proof}

\section{A  criterion for local-in-time boundedness}
\label{s:criterion}

In \cite{MR3551261}, the third author of this article proved the following result for classical solutions.
We will need it for suitable weak solutions. 
\begin{thm}[Criterion for boundedness] \label{t:prodi-serrin}
  Let $f \colon (0,T) \times \R^d \to [0,+\infty)$ be a suitable weak solution of the homogeneous Boltzmann equation
  for some initial data $\fin$. If for a.e. $t \in (0,T)$, we have
  \[  \| f(t,\cdot) \|_{L^p_k}  \le C_0  \]
  for $k$ sufficiently large and $p \in \left(\frac{d}{d+\gamma+2s},\frac{d}{d+\gamma} \right)$, then $f$ is essentially bounded in $(0,T) \times \R^d$: for a.e. $t \in (0,T)$,
  \[ \| f(t,\cdot)\|_{L^\infty(\R^d)} \le C (1+ t^{-\frac{d}{2sp}})\]
  for some constant $C$ depending only on $C_0$, the hydrodynamic bounds on $f$, dimension and the parameters $s,\gamma$ of the collision kernel. 
\end{thm}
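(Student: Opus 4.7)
The plan is to adapt the argument from \cite{MR3551261}, originally for classical solutions, to suitable weak solutions by using the approximate $L^q$-norm machinery developed in Sections~\ref{s:coercivity}--\ref{s:propagation}. The key point is that Lemma~\ref{l:error-II} was tailored precisely for this situation: the hypothesis $\|f(t,\cdot)\|_{L^p_k}\le C_0$ absorbs one factor of $f$ from the convolution $f\ast_v|\cdot|^\gamma$ appearing in the suitable weak solution inequality, which in turn makes the overall source strictly sub-critical with respect to the dissipation, whereas in Lemma~\ref{l:evol-lp} it was only Riccati-subcritical at best.

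First, for each exponent $q \ge p$ (to be taken eventually arbitrarily large) and $\kappa$ above a threshold $\kappa_0(q)$, I would apply Definition~\ref{d:suitable} with the analogue of $\phik$ built for the exponent $q$ in place of $p$. The dissipation $\iint d_{\phik}(f,f') K(v,v')\dv\dv'$ is bounded below by Lemmas~\ref{l:lower} and~\ref{l:lower-bis}: the first produces a gain of integrability from $L^q$ to $L^{q p_0}_{k_0}$, the second produces an $L^1$-control with negative weight of the mass above level $\kappa$. The source $c_c \int \Phik(f)(f\ast_v|\cdot|^\gamma)\dv$ is then estimated via Lemma~\ref{l:error-II}, with the $L^p_k$ hypothesis playing the role of the bounded $f$-factor and $F = f$; one factor of $f$ is handed off to the $L^p_k$ bound, while the remaining $\kappa$-concentrated piece is absorbed by the two coercivity terms at the price of a lower-order polynomial in $\int \phik(f)\dv$. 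The outcome is a differential inequality for $\int \phik(f)\dv$ whose right-hand side is strictly weaker than the Riccati blow-up of Lemma~\ref{l:evol-lp}, uniformly in $\kappa \ge \kappa_0(q)$.

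Next, I would run a Moser-type iteration: set $q_0 = p$, $q_{n+1} = q_n p_0$, and choose a shrinking sequence of time intervals $[t_n, T]$ so that the gain of spatial integrability at step $n$ transforms an $L^\infty_t L^{q_n}_v$-bound on $[t_n, T]$ into an $L^\infty_t L^{q_{n+1}}_v$-bound on $[t_{n+1}, T]$. Tracking the growth of the constants $C_n$ across steps, and using that the Riccati exponent at each stage is subcritical, the product $\prod_n C_n^{1/q_n}$ converges and produces an $L^\infty$-bound with the claimed time rate $t^{-d/(2sp)}$, which matches the natural Nash scaling for the fractional $2s$-order diffusion starting from an $L^p$ datum. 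Passing $\kappa\to\infty$ at each step by monotone convergence and then letting $n\to\infty$ concludes the argument.

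The main obstacle is the combined bookkeeping: for a suitable weak solution one cannot simply test against $f^{q-1}$ as for a classical solution, so every Moser step must be performed within the $\phik$ framework with constants uniform in $\kappa$, and $\kappa$ is only passed to infinity at the end of each finite step. A secondary difficulty is to align the weights $\langle v\rangle^{k_0}$ appearing in the dissipation with the weights inherited from the $L^p_k$ hypothesis through successive iterations—this is precisely what forces the exponent $k$ of the hypothesis to be taken sufficiently large—and to choose the time-thicknesses $t_{n+1}-t_n$ geometrically so that their loss matches the gain in integrability and lands exactly on the $t^{-d/(2sp)}$ scaling.
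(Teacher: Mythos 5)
Your proposal is a genuinely different strategy from the paper's. You propose a Moser iteration $q_0=p$, $q_{n+1}=q_n p_0$ over shrinking time slabs $[t_n,T]$, deriving the rate $t^{-d/(2sp)}$ from the geometry of the iteration. The paper instead runs a single De Giorgi-type level set argument with a \emph{time-dependent truncation level}: it fixes $a(t)=C_a(1+t^{-d/(2sp)})$ decreasing to $+\infty$ as $t\to 0^+$, tests the suitable-weak-solution inequality against $\varphi(f)=\phik\bigl((f-a(t))_+\bigr)$ (which is justified by the Kruzhkov-style Lemma~\ref{l:phit}), and shows that $m(t)=e^{C_m t}\int\phik((f-a(t))_+)\dv$ satisfies $m'(t)\le C_m\kappa^{-\iota}$. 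Since $m(0^+)=0$, letting $\kappa\to\infty$ forces $m\equiv 0$, i.e.\ $f(t,\cdot)\le a(t)$ a.e.\ — no iteration over exponents. The mechanism that makes this work is precisely the term $-a'(t)\int\phik'((f-a)_+)\dv$ supplied by the time derivative of the level, together with the \emph{third} coercivity estimate Lemma~\ref{e:coer-enplus} (which you did not mention), which turns the $L^p_k$ hypothesis into a dissipation term of order $a^{1+2sp/d}$ dominating all the source terms for $a$ large enough; balancing $a'$ against $a^{1+2sp/d}$ is exactly what forces $a\sim t^{-d/(2sp)}$.

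Concerning your route: it is plausible in broad outline, but there is a concrete gap. Every source- and interpolation-estimate you intend to reuse (Lemmas~\ref{l:interpol}, \ref{l:interpol-weight}, \ref{l:interpol-again}, \ref{l:leading}, \ref{l:error}, \ref{l:error-II}) is proved only for exponents in the bounded window $p\in\bigl(\tfrac{d}{d+\gamma+2s},\tfrac{d}{d+\gamma}\bigr)$; the Moser exponents $q_n=p\,p_0^n$ leave that window after one step, so you cannot simply invoke those lemmas with $q$ in place of $p$. To carry out the iteration you would need to re-derive a coercivity-vs-source inequality for every $q_n$, with explicit control of how the implicit constants and the weight losses (the $\langle v\rangle^{k_0}$ with $k_0<0$ in Lemma~\ref{l:lower}) accumulate over the iteration — none of which is present, and it is not obvious how the arithmetic goes through once $q_n>\tfrac{d}{d+\gamma}$. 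The paper's choice of a time-dependent level is precisely designed to avoid this: it only ever works at the single fixed exponent $p$ inside the admissible window, and extracts the full $L^\infty$ bound in one pass.
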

The proof of Theorem~\ref{t:prodi-serrin} consists in  studying the propagation of the approximate $L^p$ norm of $(f-a(t))_+$,
\[ \varphi (f) =  \phik ((f-a)_+) \]
where we recall that
\[ \phik (r) = (r \wedge \kappa)^p + q \kappa^{p-1} (r-\kappa)_+.\]
Straightforward computations yield,
\begin{equation}
  \label{e:phi-dphi-k}
  \begin{cases}
  \Phi (r) &= \Phi_\kappa ((r-a)_+) + a \phik' ((r-a)_+), \\
  d_\varphi (r,s) & = d_{\phik} ((r-a)_+, (s-a)_+) + \phik' ((r-a)_+)(a-s)_+
  \end{cases}
\end{equation}
where $\Phi_\kappa$ and $d_{\phik}$ are $\Phi$ and $d_\varphi$ for $\varphi = \phik$.

\subsection{Time-dependent truncation}

In order to establish this result, we first remark that the entropy inequality \eqref{e:generalentropy} can be obtained for a function $a=a (t)$. 
\begin{lem}[Time-dependent truncation] \label{l:phit}
  Let  $a \colon (0,T) \to (0,+\infty)$ be $C^1$ and  $f \colon (0,T) \times \R^d \to [0,+\infty)$ be a suitable weak solution of the homogeneous Boltzmann equation such that for a.e. $t \in (0,T)$, 
  \[  \| f(t,\cdot) \|_{L^p_k}  \le C_0  \]
  for $k$ sufficiently large and $p \in \left(\frac{d}{d+\gamma+2s},\frac{d}{d+\gamma} \right)$. Then, for $\varphi (t,r) = (r-a(t))_+$, we have 
  \begin{multline*} 
  \frac{d}{dt} \int_{\R^d} \varphi (t,f(t,v)) \dv + \iint_{\R^d \times \R^d} d_\varphi (f,f') K (v,v') \dv \dv' \\
  \le c_c \int_{\R^d} \Phi (t,f) (f \ast_v |\cdot|^\gamma) \dv - a'(t) \int_{\R^d} \varphi'_a (f(t,v)) \dv \quad \text{ in } \mathcal{D}'((0,T)).
  \end{multline*}
\end{lem}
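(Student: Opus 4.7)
My plan is to extend the distributional inequality \eqref{e:generalentropy} from a constant threshold $a > 0$ to the $C^1$ curve $t \mapsto a(t)$ by averaging the $a$-indexed family of inequalities against a mollifier that concentrates on the graph $\{a = a(t)\}$, and then to identify the extra contribution coming from the $t$-dependence of $a$ as a chain-rule correction.

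Since $a \mapsto \varphi_a(r),\,\Phi_a(r),\,d_{\varphi_a}(r,\rho)$ are continuous, the distributional form of \eqref{e:generalentropy} tested against $\eta \in C_c^\infty((0,T))$ with $\eta \ge 0$ can be integrated in $a$ against any nonnegative $\xi \in C_c^\infty((0,T) \times (0,\infty))$ via Fubini, yielding
\[
  -\iint \partial_t \xi \int_{\R^d}\varphi_a(f)\dv\dd a\dt + \iint \xi \iint d_{\varphi_a}(f,f')K\dv\dv'\dd a\dt \le c_c \iint \xi \int_{\R^d}\Phi_a(f)(f\ast_v|\cdot|^\gamma)\dv\dd a\dt.
\]

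I would then specialize to $\xi(t,a) = \eta(t)\omega_\delta(a - a(t))$, where $\omega_\delta$ is a nonnegative mollifier supported in $(-\delta,\delta)$ and $a$ is extended smoothly past the support of $\eta$ if needed. Compute $\partial_t \xi = \eta'(t)\omega_\delta(a - a(t)) - \eta(t)a'(t)\omega_\delta'(a - a(t))$; integration by parts in $a$ against $\omega_\delta'$ uses that $a \mapsto \int(f - a)_+\dv = \int_a^\infty |\{f > s\}|\ds$ is absolutely continuous with derivative $-|\{f > a\}| = -\int \varphi_a'(f)\dv$. Letting $\delta \to 0$, the $\eta'$ piece converges to $-\int_0^T \eta'(t)\int \varphi_{a(t)}(f)\dv\dt$, while the $\omega_\delta'$ piece converges to $+\int_0^T \eta(t)a'(t)\int \varphi'_{a(t)}(f)\dv\dt$, which, once moved to the other side, produces exactly the correction $-a'(t)\int \varphi'_{a(t)}(f)\dv$ appearing in the statement.

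The main obstacle will be justifying the passage $\delta \to 0$ in the remaining two terms and the integration by parts in $a$. For the source term, the hypothesis $\|f(t,\cdot)\|_{L^p_k} \le C_0$ together with the propagation of moments (Theorem~\ref{t:moments}) and the weak Young/H\"older estimates of Section~\ref{s:prelim} provide a majorant for $a \mapsto \int \Phi_a(f)(f\ast_v|\cdot|^\gamma)\dv$ that is uniform in $a$ on the compact range of $a(\cdot)$, so dominated convergence applies; the dissipation term passes by Fatou since $d_{\varphi_a} \ge 0$; and the integration by parts is legitimate by the convexity (hence absolute continuity) of $a \mapsto \int \varphi_a(f(t,v))\dv$. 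Putting these limits together yields the asserted inequality in $\mathcal{D}'((0,T))$.
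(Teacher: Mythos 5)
Your argument is correct and, at its core, it is the same Kruzhkov doubling idea that the paper uses, but you double the \emph{threshold} variable rather than the \emph{time} variable. The paper freezes the threshold at $a=a(s)$, tests against $\theta_\eps(t-s)\eta(t)$, integrates over $s$, and moves $\partial_t$ to $\partial_s$ via $\partial_t\theta_\eps(t-s)=-\partial_s\theta_\eps(t-s)$, so that the chain rule $\partial_s\!\int\varphi_{a(s)}(f)\dv=-a'(s)\!\int\varphi'_{a(s)}(f)\dv$ produces the correction term after $\eps\to 0$. You instead integrate the $a$-indexed family of inequalities against $\xi(t,a)=\eta(t)\omega_\delta(a-a(t))$, which is the dual parametrization; the correction term comes from the $\omega_\delta'$ piece of $\partial_t\xi$ after an integration by parts in $a$, using the same key identity $\partial_a\!\int(f-a)_+\dv=-|\{f>a\}|=-\!\int\varphi_a'(f)\dv$ (valid a.e.\ by convexity of $a\mapsto\int(f-a)_+\dv$, exactly as you note). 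Both versions require $a\in C^1$, both pass the dissipation term by Fatou, and both give the stated inequality. Your route is arguably a bit cleaner, since the auxiliary integration is over a variable in which the dependence of $\varphi_a,\Phi_a,d_{\varphi_a}$ is explicit and monotone.

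One small imprecision: to dominate the source term you invoke the $L^p_k$ bound together with weak Young as in Lemma~\ref{l:convol}, but that estimate reads $\int g\,(f\ast_v|\cdot|^\gamma)\dv\le C\|g\|_{L^{q_1/p}}\|f\|_{L^{q_1}}$ with $q_1\in(p,p+1)$, and $f\in L^1\cap L^p_k$ does not control $\|f\|_{L^{q_1}}$ for $q_1>p$. The majorant does exist, however, by a more elementary route: write $\int\Phi_a(f)(f\ast_v|\cdot|^\gamma)\dv=a\!\int f\,(\un_{\{f>a\}}\ast_v|\cdot|^\gamma)\dv$, observe that $|\{f>a\}|\le M_0/a$ by Chebyshev, hence $\|\un_{\{f>a\}}\ast_v|\cdot|^\gamma\|_{L^\infty}\le\int_{B_\rho}|z|^\gamma\dd z$ with $|B_\rho|=M_0/a$ (rearrangement), giving a bound $\lesssim a^{-\gamma/d}M_0$ that is uniform for $a$ in the compact range of $a(\cdot)$. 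With this replacement your dominated-convergence step is sound. So the proof is correct, with the same underlying mechanism as the paper's, and only the justification of the majorant needs adjusting.
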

\begin{proof}
We follow ideas from Kruzhkov's original idea to get the $L^1$-contraction principle for scalar conservation laws. 
It is enough to consider $a = a(s)$ and a test-function $\phi_{s,\eps} (t,v) = \theta_\eps (t-s) \phi (t,v)$ and integrate over $s$.
We then use that $\partial_t (\theta_\eps (t-s)) = - \partial_s (\theta_\eps (t-s))$ and integrate by parts with respect to $s$.
Letting $\eps \to 0$ yields the result.
\end{proof}

\subsection{Coercivity estimates}

\begin{lem}[A lower bound for the dissipation of the truncated approximate norm]\label{e:coer-enplus}
Assume that $f$ satisfies: \(  \| f \|_{L^p_k}  \le C_0 . \)
Then
\[
  \iint_{\R^d \times \R^d} \phik' ((f-a)_+)(a-f')_+ K (v,v') \dv \dv' 
  \ge \bar c_I  a^{1+\frac{2sp}d} \int_{\R^d}  \phik' ((f-a)_+) \langle v \rangle^{\bar \omega_1} \dv 
\]
with $\bar \omega_1 = \gamma +2s + \frac{2s}{d}(kp -1)$, 
for some constant $\bar c_I$ only depending $C_0$, $k$, $p$, $\lambda_0$, $s$ and $d$. 
\end{lem}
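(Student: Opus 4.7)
The plan is to mimic the structure of Lemma~\ref{l:lower}, treating the factor $\phik'((f-a)_+)$ as an external weight and establishing a pointwise lower bound, for each $v$ with $f(v)>a$, on the inner integral
\[
I(v) := \int_{\R^d} (a - f(v'))_+\, K(v,v')\, \dv' \ge \bar c_I\, a^{1+\frac{2sp}{d}} \langle v \rangle^{\bar\omega_1}.
\]
Once this is shown, multiplying by $\phik'((f-a)_+)\ge 0$ and integrating over $v$ gives the stated lemma.

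To obtain the pointwise bound on $I(v)$, I would restrict the $v'$ integration to the sublevel set $\{f(v')\le a/2\}$, so $(a-f')_+ \ge a/2$, then restrict to the non-degeneracy cone $C(v)\cap B_R(v)$ of Lemma~\ref{l:cone}, where $K(v,v')\ge \lambda_0\langle v\rangle^{\gamma+2s+1}|v'-v|^{-d-2s}\ge \lambda_0\langle v\rangle^{\gamma+2s+1}R^{-d-2s}$. The crux is to choose $R=R(v,a)$ so that the sublevel set fills at least half of $C(v)\cap B_R(v)$. By Chebyshev and the hypothesis $\|f\|_{L^p_k}\le C_0$, writing $\langle v' \rangle \ge c\langle v \rangle$ on $B_R(v)$ when $R\le \tfrac12 \langle v \rangle$,
\[
\bigl|\{f>a/2\}\cap B_R(v)\bigr| \le \frac{2^p}{a^p}\int_{B_R(v)} f^p(v')\,\dv' \le \frac{C\, C_0^p}{a^p\,\langle v\rangle^{kp}},
\]
while Lemma~\ref{l:cone} gives $|C(v)\cap B_R(v)|\ge \lambda_0 R^d\langle v\rangle^{-1}$. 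Matching these forces the choice
\[
R^d = C'\, a^{-p}\,\langle v\rangle^{1-kp},
\]
at which point
\[
|\{f'\le a/2\}\cap C(v)\cap B_R(v)| \ge \tfrac12\, \lambda_0\, R^d\langle v\rangle^{-1}.
\]

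Combining gives $I(v)\ge \tfrac{a}{2}\cdot \lambda_0\langle v\rangle^{\gamma+2s+1}R^{-d-2s}\cdot \tfrac{\lambda_0}{2}R^d\langle v\rangle^{-1} = c\, a\,\langle v\rangle^{\gamma+2s}R^{-2s}$. Substituting the chosen $R$, the exponent of $a$ becomes $1+\tfrac{2sp}{d}$ and the exponent of $\langle v\rangle$ becomes $\gamma+2s+\tfrac{2s}{d}(kp-1)=\bar\omega_1$, exactly the claimed powers.

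The main obstacle is verifying that the chosen $R$ really satisfies the geometric constraint $R\le \tfrac12\langle v\rangle$ (needed for $\langle v'\rangle\simeq \langle v\rangle$ and for staying in the cone's applicability range) and that the Chebyshev estimate dominates the cone volume, across the range of $(a,v)$ of interest. For $k$ sufficiently large and $p>1$, the inequality $R^d = C' a^{-p}\langle v\rangle^{1-kp} \ll \langle v\rangle^d$ becomes a condition $\langle v\rangle^{kp+d-1}\ge C a^{-p}$, which holds in a regime of large $\langle v\rangle$; for the complementary regime of bounded $\langle v\rangle$, one caps $R$ at $\eps\langle v\rangle$ and absorbs all resulting factors of $\langle v\rangle$ into the constant $\bar c_I$, exactly as done in the small-$|v'|$ case of Lemma~\ref{l:sphere}. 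This two-case handling (and keeping track of how all the $\langle v\rangle$-weights align to produce the exact exponent $\bar\omega_1$) is the only technically delicate part of the proof.
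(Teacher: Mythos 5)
Your strategy---reducing to a pointwise lower bound on $\int_{\R^d}(a-f')_+K(v,v')\dv'$, restricting to $\{f'<a/2\}$ and the non-degeneracy cone $C(v)$, applying Chebyshev against the hypothesis $\|f\|_{L^p_k}\le C_0$, and choosing $R^d\sim a^{-p}\langle v\rangle^{1-kp}$---is exactly the paper's, and your exponent arithmetic correctly recovers $a^{1+2sp/d}$ and $\bar\omega_1=\gamma+2s+\tfrac{2s}{d}(kp-1)$.

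The gap is precisely at the point you flag as delicate, and your proposed fix does not work. You integrate over $B_R(v)$ and need $\langle v'\rangle\gtrsim\langle v\rangle$ there, which forces $R\lesssim\langle v\rangle$; but if you cap $R$ at $\eps\langle v\rangle$ once the ideal $R$ exceeds this, the Chebyshev upper bound on the bad set is $\lesssim a^{-p}\langle v\rangle^{-kp}$ (a quantity independent of $R$), while the cone volume $|C(v)\cap B_R(v)|\sim R^d/\langle v\rangle$ shrinks with the cap. So in the capped regime the bad set need no longer occupy less than half of $C(v)\cap B_R(v)$, and you lose the measure estimate. Absorbing $\langle v\rangle$-powers into $\bar c_I$ cannot rescue this: the failure is a comparison of measures that depends on $a$ as well, and $\bar c_I$ must not depend on $a$.

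The paper sidesteps the issue entirely by integrating not over $B_R(v)$ but over the set
\[
  T_R(v)=
  \begin{cases}
    B_{R/2}(v) & \text{if } |v|\ge R,\\
    B_{3R}(v)\setminus B_{2R}(v) & \text{if } |v|\le R.
  \end{cases}
\]
When $|v|\le R$ and $v'\in B_{3R}(v)\setminus B_{2R}(v)$, one has $|v'|\ge 2R-|v|\ge|v|$, hence $\langle v'\rangle\ge\langle v\rangle$ with no constraint on $R$ whatsoever; and $|C(v)\cap T_R(v)|\gtrsim R^d/\langle v\rangle$ still holds by Remark~\ref{r:tr} and the scaling of the cone. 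Replacing $B_R(v)$ by $T_R(v)$ removes the need for your two-case analysis, and the rest of your argument then gives the stated lower bound for all $a>0$.
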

\begin{proof}
\begin{align*}
  \iint_{\R^d \times \R^d} \phik' ((f-a)_+)(a-f')_+ &K (v,v') \dv \dv' \\
  &\ge \frac{a}2 \iint_{\{f' < a/2\}}  \phik' ((f-a)_+)   K (v,v') \dv' \dv \\
&\ge \frac{a}2 \int_{\R^d}  \phik' ((f-a)_+) \left\{ \int_{\{f' < a/2\} \cap C(v) \cap T_R (v)} K (v,v') \dv' \right\} \dv
\end{align*}
where we recall that $C(v)$ denotes the non-degeneracy cone from Lemma~\ref{l:cone} and we consider,
\[
  T_R (v) = \begin{cases}
              B_{R/2} (v) & \text{ if } |v| \ge R, \\ B_{3R} (v) \setminus B_{2R} (v) & \text{ if } |v| \le R.
            \end{cases}
\]
We remark that for $v' \in T_R(v)$, we have $\langle v \rangle \le 2 \langle v' \rangle$. 

Here, we choose $R$ so that $f' < a/2$ in half of $B_R(v) \cap C(v)$.  We achieve it by taking
\[ \langle v \rangle^{-1} R^d = C_R \frac{\|f\|_{L^p_k}^p}{\langle v \rangle^{k p} a^p} \]
for some  constant $C>0$ large enough (only depending on $k,p$ and $\lambda_0$ from Lemma~\ref{l:cone}).
Indeed, we use once again Chebyshev's inequality to write,
\begin{align*}
  |\{f' \ge (a/2) \} \cap C(v) \cap T_R (v) | &\le \frac{2^{(k+1)p}}{a^p \langle v \rangle^{kp}} \int_{T_R (v)} f^p (v') \langle
                                                v' \rangle^{kp} \dv'\\
  & \le \frac{2^{(k+1)p}}{C_R} \frac{R^d}{\langle v \rangle}.
\end{align*}
Using Lemma~\ref{l:cone} (see also Remark~\ref{r:tr} for the case $|v|\le R$), this implies, 
\begin{align*}
  |\{f' < (a/2) \} \cap C(v) \cap T_R (v) | &\ge \left(\lambda_0 - \frac{2^{(k+1)p}}{C_R}\right) \frac{R^d}{\langle v \rangle} \\
  & \ge \frac{\lambda_0}2 \frac{R^d}{\langle v \rangle}
\end{align*}
for $C_R = 2^{k+2} p \lambda_0^{-1}$. 
We remark that \[R^d \le  \frac{C_R C_0^p}{\langle v \rangle^{kp-1}  a^p}\]
where $C_0$ comes from the assumption of the theorem.

With this choice of $R$, we get the following inequality
\begin{align*}
  \iint_{\R^d \times \R^d} \phik' ((f-a)_+)(a-f')_+ &K (v,v') \dv \dv' \\ 
      & \geq \frac{\lambda_0^2 }{4} a \int_{\R^d} \phik' ((f-a)_+)   \langle v \rangle^{\gamma+2s} R^{-2s} \dv \\
\intertext{we now use the upper bound for $R$,}
                                                    & \geq \frac{\lambda_0^2}{4} a \int_{\R^d} \phik' ((f-a)_+) \langle v \rangle^{\gamma+2s} (C_R C_0^p)^{\frac{-2s}d}
        \langle v \rangle^{\frac{2s}d (kp -1)} a^{\frac{2s}d p}\dv \\
    &\geq \bar c_I  a^{1+2sp/d} \int_{\R^d}  \phik' ((f-a)_+) \langle v \rangle^{\gamma+2s+\frac{2s}d (kp-1)} \dv
\end{align*}
with $\bar c_I=\frac{\lambda_0^2}{4} (C_R C_0^p)^{\frac{-2s}d}$. 
\end{proof}

\subsection{Proof of conditional boundedness}

\begin{proof}[Proof of Theorem \ref{t:prodi-serrin}]
We consider the decreasing function,
\[ a(t) = C_a(1+t^{-d/(2sp)}),\]
(for some constant $C_a>1$ to be determined) and the function $m(t)$ for $t>0$ defined by,
\[ m(t) := e^{C_m t}\int_{\R^d} \phik ((f(t,v) - a(t))_+) \dv . \]
Since $a(t) \to +\infty$ as $t \to 0$, we can make $m(t)$ arbitrarily small near $t = 0$. We prove Theorem \ref{t:prodi-serrin} after we show that $m(t)$ is monotone decreasing in time. As a matter of fact, we prove something slightly weaker,
\[ \frac{\dd}{\dt} m (t) \le C_m \kappa^{-\iota} \]
for some $\iota >0$. Given $t_1>0$, we pick $t_0 \in (0,t_1)$, we integrate between $t_0$ and $t_1$ and we let $t_0 \to 0$ and $\kappa \to +\infty$: this yields $m(t_1)=0$. 

Using Lemma \ref{l:phit} after integrating with respect to $a$ against $\varphi''(a)$ (see Formula~\eqref{e:convex-rep}), we see that in order to prove that $m(t)$ is decreasing, we must verify that for all $t \in (0,T)$,
\begin{multline*} 
  \iint_{\R^d \times \R^d} d_\varphi (f,f') K (v,v') \dv \dv' \\
  \geq c_c \int_{\R^d} \Phi (f) (f \ast_v |\cdot|^\gamma) \dv - a' \int_{\R^d}\phik' ((f-a)_+) \dv - C_m \int_{\R^d} \varphi (f) \dv .
\end{multline*}
In view of \eqref{e:phi-dphi-k}, this is equivalent to
\begin{equation} \label{e:aim}
  \begin{aligned}
  \iint_{\R^d \times \R^d} & d_{\phik} ((f-a)_+,(f'-a)_+) K (v,v') \dv \dv' + \iint_{\R^d \times \R^d} \phik' ((f-a)_+)(a-f')_+ K (v,v') \dv \dv'\\
                                                              &\geq c_c \int_{\R^d} \Phi_{\kappa} ((f-a)_+) (f \ast_v |\cdot|^\gamma) \dv 
    +c_c a \int_{\R^d} \phik' ((f-a)_+) (f \ast_v |\cdot|^\gamma) \dv \\
  &-a' \int_{\R^d} \phik' ((f-a)_+) \dv - C_m \int_{\R^d} \phik ((f-a)_+) \dv - C_m \kappa^{-\iota}.
  \end{aligned}
\end{equation}

\paragraph{Lower bounds for (LHS).}  Let (LHS) denote the left-hand side of \eqref{e:aim}.
Lemmas~\ref{l:lower}, \ref{l:lower-bis} and \ref{e:coer-enplus} give a lower bound for each term of this quantity. Taking their mean, we get 
\begin{equation}
  \label{e:i}
\begin{aligned}
  \mathrm{(LHS)} \geq &c_{1,D} \left\| \left((f-a)_+ \wedge \kappa \right)^p \right\|_{L^{p_0}_{k_0}} \\
  +& c_{2,D}  \kappa^{\frac{2sp }d} \| ((f-a)_+ \wedge \kappa)\|_{L^p}^{-\frac{2sp}d} \int_{\R^d} \kappa^{p-1}(f-a-\kappa)_+ (v) \langle v \rangle^{k_0} \dv\\
  +& \bar c_I a^{1+\frac{2sp}d} \left( \int_{\R^d} \phik' ((f-a)_+) \langle v \rangle^{\bar \omega_1} \dv \right)  - C_{1,D} \int_{\R^d} \varphi(f)  \dv
\end{aligned}
\end{equation}
with $\bar \omega_1$ arbitrarily large (thanks to $k$).
\bigskip

We need appropriate bounds for
\begin{align*}
 \mathrm{(i)} &=  \int_{\R^d} \Phi_\kappa((f-a)_+) \times (f \ast |\cdot|^\gamma) \dv,\\
 \mathrm{(ii)} &=   a \int_{\R^d} \phik' ((f-a)_+) \times ((f \wedge a) \ast |\cdot|^\gamma) \dv, \\
  \mathrm{(iii)} &=   a \int_{\R^d} \phik'((f-a)_+)\times ((f-a)_+ \wedge \kappa \ast |\cdot|^\gamma) \dv, \\
  \mathrm{(iv)} &=   a \int_{\R^d} \phik'((f-a)_+)\times ((f-a-\kappa)_+  \ast |\cdot|^\gamma) \dv.
\end{align*}
 
\paragraph{Upper bound for (i).}
We recall that $\Phi_\kappa (r) = (p-1) (r \wedge \kappa)^p$. In particular, 
\[ \mathrm{(i)} =  (p-1)\int_{\R^d} \left((f-a)_+ \wedge \kappa\right)^p \times (f \ast |\cdot|^\gamma) \dv.\]
Since $k >0$, we have in particular, if $F$ denotes $((f-a)_+\wedge \kappa)^p$,
\begin{align*}
\mathrm{(i)}  & \leq c_c(q-1) \|f\|_{L^p} \|F\|_{L^r} \\
       & \leq c_c(q-1) \|f\|_{L^p_k} \|F\|_{L^r} 
\end{align*}
where $1/r + 1/p = 2 + \gamma/d$. If we consider $r_p = p r$, we can write
\[\mathrm{(i)} \leq c_c q C_0 \| (f-a)_+ \wedge \kappa\|_{L^{r_p}}^p. \]
We now use Lemma~\ref{l:interpol-again} with $F=(f-a)_+ \wedge \kappa$ and get for all $\eps >0$,
\begin{equation} \label{e:iic}
  \mathrm{(i)} \le C_\eps\int_{\R^d} ((f-a)_+\wedge \kappa)^{p-1} \langle v \rangle^{k_p}  \dv   + \eps \left\| ((f-a)_+\wedge \kappa)^p\right\|_{L^{p_0}_{k_0}}  
\end{equation}
where $k_p$ and $C_\eps^0$ comes from the lemma (after rescaling).

\paragraph{Upper bound for  (ii).}
For $r>1$ such that $\frac1{r}- \frac{\gamma}d =1$, we have 
\begin{align*}
  \| (f \wedge a) \ast |\cdot|^\gamma \|_{L^\infty}& \le \| (f\wedge a)\|_{L^r} \\
  & \le \| (f\wedge a)\|_{L^p}^\theta \| (f\wedge a)\|_{L^\infty}^{1-\theta} 
\end{align*}
with $\theta \in (0,1)$ such that $\frac{\theta}p + \frac{1-\theta}{\infty} = \frac1{r}$.
We have to check that $p<r$. This is equivalent to $p < \frac{d}{d+\gamma}$.
Moreover, we have
\begin{equation}
  \label{e:theta}
  (1-\theta) = 1 - p \left(1 + \frac{\gamma}d \right) < \frac{2s p} d.
\end{equation}
Since $a \ge 1$, we thus proved that
\begin{equation}
  \label{e:iia}
\mathrm{(ii)}  \leq \Ciia a^{1+(1-\theta)} \int_{\R^d}  \phik'((f-a)_+) \dv
\end{equation}
with $\theta \in (0,1)$ such that \eqref{e:theta} holds true.

\paragraph{Upper bound for (iii).}
Let $F$ denote $(f-a)_+ \wedge \kappa$ again. We use first H\"older's inequality,
\begin{align*}
  \mathrm{(iii)} &= c_c p  \int_{\R^d} (f\wedge a) F^{p-1} \times (F \ast |\cdot|^\gamma) \dv, \\
                 & \le c_c p \| f\wedge a\|_{L^p} \left\| F^{p-1} \times (F \ast |\cdot|^\gamma) \right\|_{L^{p'}}
\end{align*}
 with $\frac1p + \frac1p' =1$.
The first norm can be estimated thanks to the assumption. As far as the second norm is concerned, we consider
$\frac1{r_p} = \frac1p \left( 1 + \frac{\gamma}d + \frac1{p'} \right)$ and $p_2$ such that $\frac1{p_2} + \frac{p-1}{r_p} = \frac1{p'}$. The range of values $p$ corresponds to the condition $r_p \in (p,pp_0)$.
In particular, we have $1 +\frac1{p_2} = \frac1{r_p} - \frac{\gamma}d$.
We then use H\"older's inequality and then weak Young's convolution inequality to get,
\begin{align*}
  \mathrm{(iii)}  & \le c_c p C_0 \left\| F^{p-1}  \right\|_{L^{\frac{r_p}{p-1}}} \left\| (F \ast |\cdot|^\gamma) \right\|_{L^{p_2}} \\
                & \le c_c p C_0 \left\| F^{p-1}  \right\|_{L^{\frac{r_p}{p-1}}} \left\| F \right\|_{L^{r_p}}  \\
  & = c_c p C_0  \left\| F\right\|_{L^{r_p}}^p.
\end{align*}
We now apply Lemma~\ref{l:interpol-again} and get for all $\eps >0$,
\begin{equation}\label{e:iib}
  \mathrm{(iii)}  \le C_\eps \int_{\R^d} ((f-a)_+ \wedge \kappa)^{p-1} \langle v \rangle^{k_p} \dv   + \eps \left\|((f-a)_+ \wedge \kappa)^p \right\|_{L^{p_0}_{k_0}}  
\end{equation}
for some $k_q >0$ and some constant $C_\eps$ depending on $\eps$, $c_c$, $M_0$ and $q$. 

\paragraph{Upper bound for (iv).}
This term is estimated thanks to Lemma~\ref{l:error-II} applied to $F = (f-a)_+$. 

\begin{align}
\nonumber  \mathrm{(iv)} &=   a \int_{\R^d} \phik'(F)\times ((F-\kappa)_+  \ast |\cdot|^\gamma) \dv \\
\nonumber  &  \le \eps  \kappa^{\frac{2sp}d} \left\|(F \wedge \kappa) \right\|^{- \frac{2s}d p}_{L^p} \int_{\R^d} \kappa^{p-1} (F-\kappa)_+(v) \langle v \rangle^{k_0} \dv \\
\label{e:iv-estim}    &+  \tilde{C}_\eps \kappa^{-\iota} \left(\int_{\R^d} \phik (F)  \dv \right)^{\bar{\beta}}.
\end{align}

\paragraph{Conclusion.}
Picking $k$ large enough so that $\bar \omega_1 > k_p>0$ from \eqref{e:iib} and \eqref{e:iic}, we combine \eqref{e:i}, \eqref{e:iic}, \eqref{e:iia}, \eqref{e:iib}, and \eqref{e:iv-estim},
\begin{align*}
  \mathrm{(LHS)} - \mathrm{(RHS)} \geq &  (c_{1,D}-2\eps) \|\varphi(f)\|_{L^{p_0}_{k_0}} \\
  + &(c_{2,D}-\eps)  \kappa^{\frac{2sp }d} \| ((f-a)_+ \wedge \kappa)\|_{L^p}^{-\frac{2sp}d} \int_{\R^d} \kappa^{p-1}(f-a-\kappa)_+ (v) \langle v \rangle^{k_0} \dv\\
   -& \tilde C_\eps \kappa^{-\iota} \left(\int_{\R^d} \varphi (f) \dv \right)^{\bar{\beta}} -C_{1,D} \int_{\R^d} \varphi (f) \dv - \tilde{C}_\eps  C_0^{\bar \beta} \kappa^{-\iota}\\
   +& \left(q a'(t) - 2 C_\eps - \Ciia a(t)^{1+ (1-\theta)} + \bar c_I a(t)^{1+\frac{2sp}d} \right)  \int_{\R^d} ((f(t,v) - a(t))_+\wedge \kappa)^{q-1} \langle v \rangle^{\bar \omega_1} \dv.    
\end{align*}

Choosing $\eps = \min( c_{1,D}/2, c_{2,D})$, we are left with checking that we can choose $C_a$ large enough so that $a = C_a (1+ t^{-\frac{d}{2sp}})$ satisfies
\[
  q a' (t) \geq  2 C_\eps + \Ciia a(t)^{1+ 2sp/d - \delta} - \bar c_I a(t)^{1+2sp/d} 
\]
where $\delta = \frac{2sp}d - (1-\theta) \in (0, 2sp/d)$, see \eqref{e:theta}.
We rewrite this inequality as,
\[
  - a' (t) \leq  \frac1{q}\left(\bar c_I  - 2 C_\eps a(t)^{-1-2sp/d} - \Ciia a(t)^{- \delta} \right) a(t)^{1+2sp/d} .
\]
Since $a \ge C_a$, this inequality holds true as soon as,
\[
  - a' (t) \leq  \frac1{q}\left(\bar c_I  - 2 C_\eps C_a^{-1-2sp/d} - \Ciia C_a^{- \delta} \right) a(t)^{1+2sp/d} .
\]
We first pick $C_a$ large enough so that 
\[ \frac1{q}\left(\bar c_I  - 2 C_\eps C_a^{-1-2sp/d} - \Ciia C_a^{- \delta} \right)  \ge \frac{\bar c_I}{2q}\]
and we are left with checking,
\[
  - a' (t) \leq  \frac{\bar c_I}{2q} a(t)^{1+2sp/d} .
\]
In view of the definition of $a$, this is equivalent to,
\[
  \frac{d}{2sp} t^{-\frac{d}{2sp}-1}  \leq  \frac{\bar c_I C_a^{\frac{2sp}{d}}}{2q} \left(1+t^{-\frac{d}{2sp}}\right)^{1+2sp/d} .
\]
It is now sufficient to pick $C_a$ large enough so that
\[ \frac{\bar c_I C_a^{\frac{2sp}{d}}}{2q} \ge \frac{d}{2sp}.\]
This achieves the proof of the theorem. 
\end{proof}

\section{Partial regularity}
\label{s:partial}

The proof of the main result relies on the propagation of $L^p$ norms for some well chosen $p$.
Such a propagation result is obtained by approximation: we consider a convex function $\phik(r)$ that approximates $r^p$
and can be used in the definition of suitable weak solutions (see Definition~\ref{d:suitable}).

\subsection{An improved criterion for boundedness}

\begin{lem}[Small approximate entropy dissipation implies boundedness] \label{l:improved}
Let $T \in (0,1)$ and $f \colon (0,T) \times \R^d \to (0,+\infty)$ be a suitable weak solution of the Boltzmann equation.
Let $p \in \left(\frac{d}{d+\gamma+2s},\frac{d}{d+\gamma} \right)$ and $\eps \in (0,1)$ and $\kappa \ge \max(\kappa_0,\kappa_1)$ with
$\kappa_0$ given by Lemma~\ref{l:lower-bis} above and $\kappa_1$ given by Lemma~\ref{l:technical} below. Then,
\[
  \frac1T \int_0^T \left( \int_{\R^d} \phik (f(t,v)) \dv \right)^{\frac1p} \dt \le T^{-\frac1{p \alpha}} \eta \quad \Rightarrow  \quad  \| f(t,\cdot)\|_{L^\infty((T/2,T) \times \R^d)} \le \Cic 
\]
with $\alpha = \beta_\eps - 1>0$. We recall that $\beta_\eps \to \beta$ as $\eps \to 0$ with $\beta$ given by \eqref{e:beta}. The constants $\eta$ and $\Cic$ only depend on $d,p,s,\gamma,\eps,p$.  
\end{lem}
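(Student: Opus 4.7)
The plan is to combine Markov's inequality with the Riccati-type ODE from Lemma~\ref{l:evol-lp} and then invoke the conditional boundedness Theorem~\ref{t:prodi-serrin}. Set $m(t):=\int_{\R^d}\phik(f(t,v))\dv$. Restricting the hypothesis to the sub-interval $[0,T/4]$, we have $\frac{4}{T}\int_0^{T/4} m(t)^{1/p}\dt\le 4\,T^{-1/(p\alpha)}\eta$, so there exists $t_0\in[0,T/4]$ with $m(t_0)^{1/p}\le 4\,T^{-1/(p\alpha)}\eta$, that is,
\[
m(t_0)\le (4\eta)^p\,T^{-1/\alpha}.
\]

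The heart of the argument is a scalar ODE comparison on $[t_0,T]$. By Lemma~\ref{l:evol-lp},
\[
m'(t)\le \Cevol\, m(t)^{\beta_\eps}+\Cevol\,\kappa^{-\iota} m(t)^{\tilde\beta}\qquad\text{in }\mathcal{D}'((t_0,T)).
\]
For the pure Riccati ODE $y'=Cy^{\beta_\eps}$ with $\beta_\eps>1$, the explicit formula $y(t)^{-\alpha}=y(t_0)^{-\alpha}-\alpha C(t-t_0)$ shows that $y$ stays bounded on $[t_0,T]$, with $y(t)\le 2^{1/\alpha}y(t_0)$, whenever $\alpha C\,y(t_0)^{\alpha}\cdot T\le \tfrac12$. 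The crucial scaling observation is that
\[
m(t_0)^{\alpha}\cdot T\le (4\eta)^{p\alpha},
\]
so the powers of $T$ cancel exactly — this is precisely the purpose of the $T^{-1/(p\alpha)}$ factor in the hypothesis. Consequently, choosing $\eta$ small in terms of $\Cevol,\alpha,p$ only, the comparison principle forces $m(t)\le 2^{1/\alpha}m(t_0)$ for every $t\in[t_0,T]$. The perturbative term $\Cevol\,\kappa^{-\iota}m^{\tilde\beta}$ is treated as a small correction by enlarging $\kappa\ge\kappa_1$ (the role of Lemma~\ref{l:technical}), so that throughout the attained range of $m$-values one has $\kappa^{-\iota}\,m(t)^{\tilde\beta-\beta_\eps}\le\tfrac12$, and the perturbation is absorbed.

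With the $\kappa$-uniform bound $m(t)\le C$ established on $[T/4,T]$, monotone convergence ($\phik(f)\uparrow f^p$ as $\kappa\to\infty$) yields $\|f(t,\cdot)\|_{L^p}\le C$ for a.e.\ $t\in[T/4,T]$. Interpolating this against the high-order moment bounds from Theorem~\ref{t:moments} via Lemma~\ref{l:holder-weight} upgrades it to $\|f(t,\cdot)\|_{L^p_k}\le C_0$ for the weight $k$ required by Theorem~\ref{t:prodi-serrin}. Finally, applying that theorem to the time-shifted solution $\tilde f(t,\cdot):=f(t+T/4,\cdot)$ on $(0,3T/4)$ yields $\|f(t,\cdot)\|_{L^\infty(\R^d)}\le \Cic$ for $t\in(T/2,T)$.

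The main obstacle lies in the Riccati step: $\eta$ must be chosen independently of $T$, which is only possible thanks to the precise $T^{-1/(p\alpha)}$ scaling in the hypothesis. A secondary technicality is controlling the perturbative term $\kappa^{-\iota}m^{\tilde\beta}$ throughout $[t_0,T]$ without spoiling this $T$-independent budget — this is exactly what the threshold $\kappa_1$ in the statement encodes. The final step (weighted $L^p_k$ upgrade from unweighted $L^p$ plus moments) is routine once $k$ is chosen large enough to accommodate the growth rates appearing in Theorem~\ref{t:prodi-serrin}.
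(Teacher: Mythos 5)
Your proof follows essentially the same strategy as the paper, which likewise extracts a small initial value for $\int_{\R^d}\phik(f)\dv$ near $t=0$ (via Chebyshev rather than the mean value), runs a Riccati-type comparison from Lemma~\ref{l:evol-lp}, and applies Theorem~\ref{t:prodi-serrin}; the paper simply packages the ODE portion into the separate scalar Lemma~\ref{l:technical} applied to $X(t)=\left(\int\phik(f)\dv\right)^{1/p}$. Two points to tighten. First, absorbing the $\kappa^{-\iota}m^{\tilde\beta}$ term ``throughout the attained range of $m$-values'' presupposes the very bound you are proving; the paper sidesteps this circularity by comparing $X$ with the truncation $Y=\min(X,M_0)$, $D\kappa^{-\iota}M_0^\delta=1$, so that $Y$ satisfies the \emph{unperturbed} Riccati inequality unconditionally and the bound on $Y$ then transfers to $X$ once $M_0>M$ -- a continuation argument would also work, but you should spell it out. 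Second, the unweighted bound $\|f(t,\cdot)\|_{L^p}\le C$ together with moment bounds cannot be interpolated to a weighted $\|f(t,\cdot)\|_{L^p_k}$ with $k>0$ at the \emph{same} exponent $p$: Lemma~\ref{l:holder-weight} only delivers $\|f(t,\cdot)\|_{L^{p'}_k}\le C_0$ for some $p'<p$. This suffices because the admissible interval $\left(\tfrac{d}{d+\gamma+2s},\tfrac{d}{d+\gamma}\right)$ is open, but the exponent shift should be stated explicitly before invoking Theorem~\ref{t:prodi-serrin}.
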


This lemma is an immediate consequence of Lemma~\ref{l:evol-lp}, Theorem~\ref{t:prodi-serrin} and the following technical result --  applied to $X (t) =  \left( \int_{\R^d} \phik (f(t,v)) \dv \right)^{\frac1p}$.
\begin{lem} \label{l:technical}
  Let $\kappa \ge 1$, $\iota >0$, $p\alpha > 1$, $\delta >0$ and $X \colon (0,T) \to (0,+\infty)$ be essentially bounded
  and such that 
  \[ \dot X \le C X^{1+p\alpha} + D \kappa^{-\iota} X^{1+p\alpha + \delta} \quad \text{ in } \mathcal{D}'((0,T)) \]
  for some constants $C,D \ge 0$. 
  Then for $\kappa \ge \kappa_1$,
  \[ \frac1T \int_0^T X (t) \dt \le T^{-\frac1{p\alpha}} \eta \Rightarrow \|X\|_{L^\infty(T/3,T)} \le M T^{\frac1{p\alpha}}\]
with $\eta$ and $M$ only depending on $C,D$ and $\alpha$ and $\kappa_1$ depending on $C,D,p\alpha$ and $\delta$. 
\end{lem}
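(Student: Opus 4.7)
The approach is a Riccati-type bootstrap. The mean-value hypothesis supplies a starting time $t_0$ near the beginning of $(0,T)$ at which $X(t_0)$ is already small, and the differential inequality---treated as a perturbation of the pure Riccati ODE $\dot Y = C Y^{1+p\alpha}$---then propagates this smallness to all of $(t_0,T)$, hence in particular to $(T/3,T)$.

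First, rewrite the hypothesis as $\int_0^T X(t)\dt \le T^{1-1/(p\alpha)}\eta$. Restricting the integral to $(0,T/3)$ and taking a mean value on this subinterval, one can pick $t_0 \in (0,T/3)$ (outside a null set) with
\[
  X(t_0) \le 3\eta\, T^{-1/(p\alpha)}.
\]

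For the bootstrap, fix the threshold $K := 4^{1/(p\alpha)}\cdot 3\eta\, T^{-1/(p\alpha)}$ and define
\[
  t_* := \sup\bigl\{\, t \in [t_0,T] : X \le K \text{ a.e.\ on }[t_0,t]\,\bigr\}.
\]
On $(t_0,t_*)$ the hypothesis reduces to the Riccati-type inequality $\dot X \le A X^{1+p\alpha}$ with $A := C + D\kappa^{-\iota}K^\delta$. Choose $\eta$ small enough (depending only on $C,p\alpha$) that $8 p\alpha C \le (3\eta)^{-p\alpha}$, and $\kappa_1$ large enough that $D\kappa^{-\iota}K^\delta \le C$ for every $\kappa \ge \kappa_1$; then $A \le 2C \le (3\eta)^{-p\alpha}/(4p\alpha)$. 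Integrating $\frac{d}{dt}(X^{-p\alpha}) \ge -p\alpha A$ from $t_0$ and using $X(t_0)^{-p\alpha} \ge (3\eta)^{-p\alpha} T$ together with $t-t_0 \le T$ yields, on $(t_0,t_*)$,
\[
  X(t)^{-p\alpha} \ge T\bigl[(3\eta)^{-p\alpha} - p\alpha A\bigr] \ge \tfrac{3T}{4}(3\eta)^{-p\alpha},
\]
so $X(t) \le (4/3)^{1/(p\alpha)}\cdot 3\eta\, T^{-1/(p\alpha)} < K$. The strict gap allows one to extend past $t_*$, so $t_* = T$, and one obtains $\|X\|_{L^\infty(T/3,T)} \le M\, T^{-1/(p\alpha)}$ with $M = (4/3)^{1/(p\alpha)}\cdot 3\eta$.

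The main obstacle---and the only place where the threshold $\kappa_1$ really enters---is the superlinear perturbation $D\kappa^{-\iota}X^{1+p\alpha+\delta}$. Since the bootstrap threshold scales as $K \sim T^{-1/(p\alpha)}$, the requirement $D\kappa^{-\iota}K^\delta \le C$ amounts to $\kappa^{\iota}$ dominating $T^{-\delta/(p\alpha)}$ up to constants depending on $C,D$ and $p\alpha$. Tracking this quantitative dependence of $\kappa_1$ on the data is the one careful bookkeeping step of the argument; everything else is an application of the explicit Riccati formula and the mean-value selection of $t_0$.
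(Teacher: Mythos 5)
Your proof is correct and reaches the same Riccati--ODE core as the paper: a Chebyshev/mean-value selection from the integral hypothesis furnishes a starting value $X(t_0)\lesssim \eta\,T^{-1/(p\alpha)}$, Riccati integration via $\frac{d}{dt}(X^{-p\alpha})\ge -p\alpha A$ propagates the smallness, and the superlinear perturbation is neutralised by confining attention to the regime $X\lesssim T^{-1/(p\alpha)}$. The paper implements the last step by first rescaling to $T=1$ and then truncating $X$ at the $\kappa$-dependent level $M_0=\kappa^{\iota/\delta}D^{-1/\delta}$, so that $\min(X,M_0)$ satisfies a pure Riccati inequality and one checks a posteriori that the resulting bound $M$ sits below $M_0$; you instead keep $T$ and run an explicit bootstrap at the level $K\sim\eta\,T^{-1/(p\alpha)}$, choosing $\kappa$ so that $A=C+D\kappa^{-\iota}K^\delta\le 2C$ and closing the loop by the strict inequality $X<K$. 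The two devices are interchangeable. Both yield $\|X\|_{L^\infty(T/3,T)}\le M\,T^{-1/(p\alpha)}$ with negative exponent, which is what the application to Lemma~\ref{l:improved} actually uses; the positive exponent printed in the statement is a sign typo.

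On the point you flagged as ``careful bookkeeping'': you are right that your $\kappa_1$ must dominate a power of $T^{-1}$, since $K^\delta\sim T^{-\delta/(p\alpha)}$. At first sight this conflicts with the statement, which lists $\kappa_1$ as depending only on $C,D,p\alpha,\delta$. But the paper's proof carries exactly the same dependence, hidden by the rescaling: under $Y(t)=T^{1/(p\alpha)}X(Tt)$ the superlinear coefficient becomes $D\kappa^{-\iota}T^{-\delta/(p\alpha)}$, so the requirement $M_0>M$ likewise forces $\kappa_1\to\infty$ as $T\to0$. The statement of Lemma~\ref{l:technical} should therefore include $T$ among the dependencies of $\kappa_1$. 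This is harmless downstream, because in the proof of Theorem~\ref{t:main} the parameter $\kappa$ is sent to infinity for each fixed interval length $r$ before Beppo--Levi is invoked, so the $r$-dependence of the admissible range of $\kappa$ never matters.
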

\begin{proof}
  We reduce to the case $T=1$ by scaling: $Y(t) = T^{\frac1{p\alpha}} X (T t)$.

Then we first deal with the case $D=0$. Recall that $T=1$. By considering $Y(t) = c_0 X(t)$, we have
\[ \dot Y \le \bar C Y^{1+p\alpha}  \quad \text{ in } \mathcal{D}'((0,T)) \]
with $\bar C = C / c_0^p\alpha$. We then choose $c_0$ such that $p\alpha \bar C \le 1/2$. We simply pick $c_0 = (2p\alpha C)^{\frac1{p\alpha}}$.

We then consider the set $\mathcal{S} = \{ t \in (0,1): Y (t) \ge 1 \}$. Chebyshev's inequality ensures that $|S| \le c_0 \eta$
(recall that $\int_0^1 Y (t) \dt \le \eta$). In particular, if we choose $c_0 \eta < 1/3$, we know that $(0,1/3) \setminus \mathcal{S}$
has a positive measure. We pick $\eta = (4 c_0)^{-1}$.

For $t \in (0,1/3) \setminus S$ and $h \in (0,1-t)$, we have
\[ Y^{-p\alpha} (t+h) \ge 1 - \bar C p\alpha h \ge 1 - \bar C p\alpha \ge 1/2.\]
We conclude that for a.e. $h \in (0,1-t)$, we have
\[ Y (t+h) \le 2^{p\alpha}.\]
This implies that
\[ \|X \|_{L^\infty (1/3,1)} \le (2 p\alpha C)^{-\frac1{p\alpha}} 2^{p\alpha}.\]

We finally treat the general case. We consider $M_0 = \kappa^{\frac\iota\delta} D ^{-\frac1\delta}$. In particular,
$D \kappa^{-\iota} M_0^\delta =1$. We then consider $Y (t) = \min (X(t),M_0)$. Since $Y (t) \le M_0$, it satisfies
\[ \dot Y \le \bar C Y^{1+p\alpha} \quad \text{ in } \mathcal{D}'((0,T)) \]
with $\bar C = C + D$. The previous case implies that
\[ \frac1T \int_0^T X(t) \dt \le T^{-\frac1{p\alpha}} \eta \Rightarrow \| Y \|_{L^\infty (T/3,T)} \le M \]
with
 $\eta = (4 (2p\alpha (C+D))^{\frac1{p\alpha}})^{-1}$ and $M = (2 p\alpha (C+D))^{-\frac1{p\alpha}} 2^{p\alpha}$. 
 We finally pick $\kappa_1 \ge 1$ such that for $\kappa \ge \kappa_1$, we have $M_0 > M$. 
 We conclude that $\| X \|_{L^\infty (T/3,T)} \le M$ in the general case. 
\end{proof}

\subsection{Proof of the main theorem}

\begin{proof}[Proof of Theorem~\ref{t:main}]
The estimate of the set $S$ of singular times of a suitable weak solution $f$ of the homogeneous Boltzmann equation  derives from  Lemma~\ref{l:improved} through a covering argument.

We pick $r_0>0$. For any singular time $\tau$ in the sense of Definition~\ref{defi:singular}, Lemma~\ref{l:improved} implies that
for all $\kappa \ge \max(\kappa_0,\kappa_1)$, and all $r \in (0,r_0)$,
\[
 \int_{\tau-r}^\tau \int_{\R^d} \phik (f(t,v)) \dv \dt > \eta \, r^{1-\frac1 {p\alpha}}.
\]
Since $\kappa \mapsto \phik (f)$ is non-negative and monotone increasing and $\phik (f) \to f^p$ as $\kappa \to \infty$,
Beppo-Levi's theorem implies that
\begin{equation}\label{e:deltaT}
 \int_{\tau-r}^\tau \int_{\R^d} f^p(t,v) \dv \dt > \eta \, r^{1-\frac1 {p\alpha}}.
\end{equation}

Hence,  $S$ is covered by intervals of the form $(\tau-r,\tau+r)$ such that \eqref{e:deltaT} holds true,
\[
 S \subseteq \bigcup_{\tau \in S} (\tau-r,\tau+r).
\]
By Vitali's covering lemma, there exists a countable subcollection of disjoint intervals $(\tau_i-r_i,\tau_i+r_i)$ such that
\[
S \subseteq \bigcup_{i =1}^{+\infty} (\tau_i-5r_i,\tau_i+5r_i).
\]
In particular 
\[
\sum_{i =1}^{+\infty} \int_{\tau_i-r_i}^{\tau_i} \int_{\R^3} f^p (t,v) \dv \dt > \eta \sum_{i =1}^{+\infty}  r_i^{1-\frac1{p\alpha}},
\]
and since all the intervals $(\tau_i-r_i,\tau_i+r_i)$ are disjoint, 
\begin{align*}
 \eta \sum_{i =1}^{+\infty}  r_i^{1-\frac1{p\alpha}} & \le \sum_{i =1}^{+\infty} \int_{\tau_i-r_i}^{\tau_i} \int_{\R^3} f^p (t,v) \dv \dt \\
& \le \int_0^T \int_{\R^3} f^p (t,v) \dv \dt .
\end{align*}

Since $r_0$ is arbitrarily small, this implies that the Hausdorff dimension of $S$ is smaller than $1-\frac1{p\alpha}$.

Since $\alpha$ is arbitrarily close to $\alpha_0 = \frac{2s}{p(d+\gamma+2s)-d}$ (see Lemma~\ref{l:improved}) and $p$ is arbitrarily close to $p_0$ (see the entropy dissipation
 estimate from \eqref{t:entropy} and Theorem~\ref{t:moments} about propagation of moments), we get
that the Hausdorff dimension is smaller than
\[ 1 - \frac1{\alpha_0 p_0} = -\frac{\gamma+2s}{2s} \ge 0. \qedhere \]
\end{proof}

\appendix

\section{Construction of suitable weak solutions}
\label{s:construction}

This section is devoted to the construction of suitable weak solutions.
\begin{prop}[Existence of suitable weak solutions]\label{p:existence}
  Let $B (z,\cos \theta) = |z|^\gamma b(\cos \theta)$ with $\gamma > -d$ and $b$ satisfying \eqref{e:b} for some $s \in (0,1)$.
  Let $\fin \in L^1_2 (\R^d)$ be positive a.e. and such that $\fin \ln \fin \in L^1(\R^d)$.
  There exists a suitable weak solution of the Boltzmann equation associated to the initial data $\fin$. 
\end{prop}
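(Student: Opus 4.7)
The plan is to construct suitable weak solutions by the same regularization strategy that Villani used in \cite{MR1650006} for $H$-solutions, while keeping track of the additional family of inequalities required by Definition~\ref{d:suitable}. The key observation is that the computation sketched in the introduction is rigorous when the collision kernel is cut off and the solution is smooth, and that each object appearing in \eqref{e:generalentropy} has enough convexity/positivity to pass to the limit.

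Step 1 (Regularization). Replace $B(z,\sigma) = |z|^\gamma b(\cos\theta)$ by a truncated kernel $B_n$ obtained by smoothing the angular singularity (e.g. $b_n(\cos\theta) = b(\cos\theta)\wedge n$ and cutting $\theta \le 1/n$) and, if $\gamma<0$, regularising the factor $|z|^\gamma$ near $z=0$. For each such Grad-cutoff kernel, classical theory (Arkeryd, Mischler--Wennberg) yields a global smooth solution $f_n$ to the associated homogeneous Boltzmann equation with initial datum $\fin$, conserving mass, momentum and energy, and dissipating entropy. Moreover $f_n$ is bounded in $L^\infty(0,T;L^1_2)$, $\{f_n \ln f_n\}$ is equi-integrable, and the entropy dissipation estimate of \cite{chaker2022entropy} (Theorem~\ref{t:entropy}) provides a uniform bound in $L^1(0,T;L^{p_0}_{k_0})$; moments are uniformly propagated by Theorem~\ref{t:moments}.

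Step 2 (Identity at the regularized level). For $\varphi_a(r)=(r-a)_+$ (handled first via a smooth, non-decreasing convex approximation and then passing to the pointwise limit), repeat the computation from the introduction with $f_n$ and with the kernel $K_n$ associated with $B_n$ via \eqref{e:kernel}. Since $f_n$ is smooth and the cutoff removes all singularities, every manipulation is legitimate and the cancellation lemma (Lemma~\ref{l:cancel}) applies, yielding the \emph{equality}
\[
\frac{d}{dt}\int_{\R^d}\varphi_a(f_n)\dv + \iint_{\R^d\times\R^d} d_{\varphi_a}(f_n,f_n')\,K_n(v,v')\dv\dv' = c_c^{(n)}\int_{\R^d}\Phi_a(f_n)\,(f_n \ast_v \mathcal R_n)\dv,
\]
with $c_c^{(n)}\to c_c$ and $\mathcal R_n \to |\cdot|^\gamma$ (up to a constant) as $n\to\infty$.

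Step 3 (Compactness and limits). From Step~1 extract, along a subsequence, a limit $f$ such that $f_n \to f$ in $C([0,T];\mathcal D'(\R^d))$ and weakly in $L^1_{\mathrm{loc}}$; by Villani's argument (and its refinement in \cite{chaker2022entropy}) $f$ is an $H$-solution of the original Boltzmann equation. It remains to pass to the limit in the three terms above, viewed as distributions in $t$ on $(0,T)$. The time derivative is linear and bounded in $L^1$ with respect to $v$ (using $\varphi_a(f_n)\le f_n$), so it converges in $\mathcal D'((0,T))$. The source term $\int \Phi_a(f_n)\,(f_n\ast_v\mathcal R_n)\dv = a\int_{\{f_n>a\}}(f_n\ast_v\mathcal R_n)\dv$ is of product type; using the moment propagation and the weak Young inequality, plus the strong $L^1_{\mathrm{loc}}$ convergence of $f_n$ obtained by exploiting tightness and the Dunford--Pettis theorem, one obtains convergence (in fact, an upper bound suffices for producing the inequality).

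Step 4 (The main obstacle: passage to the limit in the dissipation). This is where the inequality, rather than an equality, appears in the limit. The integrand $d_{\varphi_a}(f_n,f_n')\,K_n(v,v')\ge 0$ is jointly convex in the pair $(f_n,f_n')$ entering the Bregman distance, while $K_n$ is itself linear in $f_n$ via \eqref{e:kernel}. Writing $K_n(v,v')\dv\dv'$ as an integral in the additional variable $w$ against $f_n(v+w)$, the dissipation becomes a triple integral against a nonnegative weight in which each factor is either $f_n$, $f_n'$, or $f_n(v+w)$. One then applies a classical lower semi-continuity lemma of Villani-type: for nonnegative measurable test-densities, any functional of the form $\iiint F(f,f_*,f_{**})\,d\mu$ with $F$ convex and lower semi-continuous is l.s.c.\ under weak $L^1$ convergence, provided the sequence is equi-integrable (which follows from the bounds of Step~1 and the fact that $d_{\varphi_a}$ grows at most linearly in its arguments thanks to the truncation at $a$). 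Combining Fatou in time with this spatial lower semi-continuity converts Step~2's equality into the desired distributional inequality \eqref{e:generalentropy} for $f$.

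The main difficulty is precisely this last step: the kernel $K_n$ depends on $f_n$, so the dissipation is a genuinely quadratic (in fact cubic) object in the solution, and one must simultaneously exploit the convexity of the Bregman distance in $(f_n,f_n')$, the linearity of $K_n$ in $f_n(v+w)$, and the equi-integrability coming from the entropy dissipation estimate of \cite{chaker2022entropy}. Once this is in place, the $H$-solution property (from Villani) combined with \eqref{e:generalentropy} for every $a>0$ (from the limit procedure) shows that $f$ is a suitable weak solution in the sense of Definition~\ref{d:suitable}.
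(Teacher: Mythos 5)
Your overall strategy — truncate the kernel, derive the $\varphi_a$-identity at the regularized level via the cancellation lemma, and pass to the limit — matches the paper's. But there is a genuine gap in Step~4, and it is precisely the obstacle the paper's construction is engineered to avoid.

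In Step~4 you observe that $K_n(v,v')\,\dv\dv'$ can be rewritten as a triple integral against $f_n(v+w)$, so the dissipation integrand has the form $d_{\varphi_a}\bigl(f_n(v),f_n(v')\bigr)\,f_n(v+w)$ times a nonnegative weight, and you then invoke a ``Villani-type'' lower semicontinuity lemma for functionals $\iiint F(f,f_*,f_{**})\dd\mu$ with $F$ convex. The trouble is that this $F$, namely $F(u_1,u_2,u_3)=d_{\varphi_a}(u_1,u_2)\,u_3$, is \emph{not} jointly convex: it is a product of a (nonnegative, convex-in-$(u_1,u_2)$) function and a linear factor $u_3$, and such a product is cubic and fails joint convexity (already $u_1^2 u_3$ does). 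Hence the classical weak-$L^1$ lower-semicontinuity machinery does not apply, and under the weak $L^1_{\mathrm{loc}}$ convergence you claim in Step~3, you cannot conclude that the limiting dissipation lies below the liminf. The same difficulty affects the source term in Step~3: $\Phi_a(f_n)=a\,\un_{\{f_n>a\}}$ is a discontinuous function of $f_n$ and is not weakly $L^1$-continuous; and Dunford--Pettis does not deliver the ``strong $L^1_{\mathrm{loc}}$ convergence'' your Step~3 asserts — that sentence contains a contradiction (you go from ``weakly in $L^1_{\mathrm{loc}}$'' to ``strong $L^1_{\mathrm{loc}}$ convergence \dots obtained by \dots the Dunford--Pettis theorem'').

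The paper resolves exactly this by departing from Villani/Arkeryd and adding an artificial viscosity $\eps\Delta$ at the bounded-kernel level (Lemma~\ref{l:existence-approx}). The viscosity contributes $\eps\int|\nabla_v f^\eps|^2/f^\eps$ to the entropy dissipation inequality, hence a Fisher-information bound, which yields $f^n$ bounded in $L^1_t W^{1,1}_v$; combined with a bound on $\partial_t f^n$ in $L^1_t W^{-2,1}_v$ and Aubin's lemma, this gives \emph{strong $L^1_{\mathrm{loc}}$ compactness} and therefore a.e. convergence of a subsequence. Once one has a.e. convergence, the dissipation term (nonnegative integrand) is handled by Fatou's lemma — no convexity is needed — and the source term is handled by a.e. convergence together with uniform integrability from the $L^1_t L^{p_0}_{k_0}$ bound of \cite{chaker2022entropy} and tightness from second-moment propagation. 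The viscosity is then removed in a final step where the Chaker--Silvestre coercivity provides the substitute $H^s_{\mathrm{loc}}$ compactness. So what you are missing is a mechanism to upgrade weak to a.e. convergence; the paper's mechanism is the artificial viscosity, and without something like it your limit passage in Steps~3--4 does not go through.
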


In order to construct suitable weak solutions, we need to be able to derive the family of inequalities
associated with a family of convex functions. In order to get them, we need almost sure convergence to
identify weak limits. This is the reason why we follow Arkeryd by reducing to bounded kernels but we
depart from his work by considering an artificial vanishing viscosity. 

\subsection{Estimates for H-solutions}

In order to to construct suitable weak solutions, we recall some facts from \cite{MR1650006}.
Since we work in any dimension $d \ge 2$, we provide proofs. 

\paragraph{Entropy dissipation.}

The entropy dissipation plays a key role in the theory of H-solutions. Using symmetries of pre- and post-collisional velocities, we easily get the following formula,
\begin{multline} \label{e:dissip-weak}
  \int_{\R^d} Q_B (f,f) \ln f \dv \\
      = \frac14\iiint_{\R^d \times \R^d \times \Sd} (f(w')f(v') - f(w)f(v)) \ln \frac{f(w')f(v')}{f (w) f(v)}
    B (|v-w|,\cos \theta) \dv \dw \dsigma \ge 0.
\end{multline}
C. Villani observed \cite{MR1650006} that thanks to an elementary convex inequality, the $L^2$-norm of a well-chosen function is controlled. 
\begin{lem}[Entropy dissipation estimate] \label{l:dissip}
We consider $B(z,\cos \theta) = \Psi (|z|) b(\cos \theta)$. 
  If $\mathcal{F} (v,w)$ denotes $\sqrt{ f(v) f(w) \Psi (|v-w|)}$, then
\begin{align*}  &\iiint_{\R^d \times \R^d \times \Sd} \left(\mathcal{F}(v',w') - \mathcal{F}(v,w) \right)^2 b (\cos \theta) \dv \dw \dsigma\\
  & = \iiint_{\R^d \times \R^d \times \Sd} (\sqrt{f(w')f(v')} - \sqrt{f(w)f(v)})^2  \Psi (|v-w|) b (\cos \theta)  \dv \dw \dsigma \\
  & \le \int_{\R^d} Q_{B} (f,f) \ln f \dv .
\end{align*}
\end{lem}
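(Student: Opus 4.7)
The statement splits naturally into two independent parts: the equality between the two triple integrals, and the inequality relating the second of them to the entropy dissipation. My plan is to handle them in that order.

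For the equality, the key observation is that the collisional change of variables preserves the relative speed, i.e.\ $|v'-w'| = |v-w|$ (this follows from conservation of kinetic energy and momentum in the binary collision, which is built into the very definition of $v',w'$ recalled in the introduction). Therefore $\Psi(|v'-w'|) = \Psi(|v-w|)$ can be factored out inside the square root defining $\mathcal F$, giving
\[
\mathcal F(v',w') - \mathcal F(v,w) = \sqrt{\Psi(|v-w|)}\Bigl(\sqrt{f(v')f(w')} - \sqrt{f(v)f(w)}\Bigr).
\]
Squaring this identity and integrating against $b(\cos\theta)\dv\dw\dsigma$ yields the claimed equality immediately.

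For the inequality, the starting point is the symmetric representation \eqref{e:dissip-weak} of the entropy dissipation,
\[
\int_{\R^d} Q_B(f,f)\ln f \dv = \frac14 \iiint (f'_* f' - f_* f)\,\ln\!\frac{f'_* f'}{f_* f}\,B(|v-w|,\cos\theta)\dv\dw\dsigma,
\]
where I abbreviate $f = f(v), f_* = f(w), f' = f(v'), f'_* = f(w')$. The main analytic input is then the elementary convex inequality
\[
(x-y)\,\ln(x/y) \;\ge\; 4\bigl(\sqrt{x}-\sqrt{y}\bigr)^2 \qquad \text{for all } x,y>0,
\]
already invoked in the introduction. Applying it pointwise with $x = f'_* f'$ and $y = f_* f$ produces
\[
(f'_* f' - f_* f)\ln\!\frac{f'_* f'}{f_* f} \;\ge\; 4\bigl(\sqrt{f'_* f'}-\sqrt{f_* f}\bigr)^2,
\]
and substituting $B = \Psi(|v-w|) b(\cos\theta)$ then integrating gives
\[
\int_{\R^d} Q_B(f,f)\ln f \dv \;\ge\; \iiint \bigl(\sqrt{f'_* f'}-\sqrt{f_* f}\bigr)^2 \Psi(|v-w|) b(\cos\theta) \dv\dw\dsigma,
\]
which is precisely the second integral in the statement. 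The factor $4$ in the convex inequality cancels the factor $\tfrac14$ coming from the full $(v,w)\leftrightarrow(w,v)$ and $(v,w)\leftrightarrow(v',w')$ symmetrization that produced \eqref{e:dissip-weak}.

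No real obstacle is expected: the only non-cosmetic ingredients are the collisional identity $|v'-w'|=|v-w|$ and the scalar inequality $(x-y)\ln(x/y)\ge 4(\sqrt{x}-\sqrt{y})^2$, both used in standard fashion. The mild care needed is just to make sure that the symmetrization factor in \eqref{e:dissip-weak} matches the constant $4$ in the scalar inequality so that no stray multiplicative constant is lost.
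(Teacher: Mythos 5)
Your proposal is correct and follows exactly the same route as the paper: the paper's proof is the one-line remark that the claim is an immediate consequence of the symmetric entropy dissipation formula \eqref{e:dissip-weak} together with the elementary inequality $(a-b)(\ln a - \ln b) \ge 4(\sqrt a - \sqrt b)^2$. You simply spell out the two pieces the paper leaves implicit — that $|v'-w'|=|v-w|$ lets you pull $\Psi$ out of the difference of $\mathcal F$'s to get the equality, and that the factor $4$ cancels the $\tfrac14$ from the symmetrization.
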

\begin{proof}
  This is an immediate consequence of \eqref{e:dissip-weak} and the inequality
  $(a-b) (\ln a-\ln b) \ge 4 (\sqrt{a}-\sqrt{b})^2$ that holds true for $a,b>0$.
\end{proof}
The estimate given by the previous lemma allows us to understand
the collision operator in the following weak sense.
\begin{lem}[Weak form of the collision operator]\label{l:operator-weak}
  We consider $B(z,\cos \theta) = \Psi (|z|) b(\cos \theta)$ with
  \[ \Psi (|z|) \le |z|^\gamma \text{ and } \gamma \in [-4s,-2s] \text{ and }
    C_b = \int_{\Sd} (1-\cos \theta)^{s+\eps} b (\cos \theta) \dsigma <+\infty \]
  with $\eps>0$ such that $s +\eps <1$. 
  Let $f \colon \R^d \to \R$ such that \eqref{e:hydro} holds true for $\fin =f$. 
    If $\mathcal{F} (v,w)$ denotes $\sqrt{ f(v) f(w) \Psi (|v-w|)}$, then for all $\varphi \in \mathcal{D} (\R^d)$, 
\begin{align*}
  \int_{\R^d} Q_{B} (f,f) \varphi \dv = & \frac14 \int_{\R^d \times \R^d \times \Sd} (\mathcal{F}(v',w') - \mathcal{F}(v,w))\sqrt{b (\cos \theta)} \\
  & \times  (\mathcal{F}(v,w) + \mathcal{F}(v',w')) \delta \varphi (v,w,v',w')  \sqrt{b (\cos \theta)}  \dv \dw \dsigma
\end{align*}
with $\delta \varphi (v,w,v',w') = \varphi (v)+\varphi (w) - \varphi (v') - \varphi (w')$ and
\begin{multline*}
  \int_{\R^d \times \R^d \times \Sd} (\mathcal{F}(v,w) + \mathcal{F}(v',w'))^2 (\delta \varphi)^2 (v,w,v',w')  b (\cos \theta)  \dv \dw \dsigma \\
  \le 4^3 C_b  (M_0 + E_0)^2 ( \|D^2_v \varphi\|_{L^\infty} + \|\varphi\|_{L^\infty})^2.
\end{multline*}
\end{lem}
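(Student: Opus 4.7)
The strategy is to derive both conclusions in three stages: a symmetry computation producing the weak formulation, a factorisation of $f'f'_* - ff_*$ through $\mathcal{F}$, and finally a careful interpolation of $\delta\varphi$ that absorbs the angular singularity of $b$ into the finite constant $C_b$.

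For the identity, I would begin from the definition of $Q_B(f,f)\varphi$, apply the exchange symmetry $(v,w)\leftrightarrow(w,v)$ (averaging against $(\varphi(v)+\varphi(w))/2$), and then the pre/post-collisional change of variables $(v,w,\sigma)\leftrightarrow(v',w',\sigma')$ (unit Jacobian, $B$-invariant, swapping primed and unprimed velocities), to obtain the classical form
\[
\int Q_B(f,f)\varphi \dv = \tfrac{1}{4}\iiint (f'f'_* - ff_*)\,\delta\varphi\, B(|v-w|,\cos\theta)\dv\dw\dsigma.
\]
Conservation of the relative speed $|v-w|=|v'-w'|$ gives $\Psi(|v-w|)=\Psi(|v'-w'|)$, whence
\[
f'f'_* - ff_* \;=\; \frac{\mathcal{F}(v',w')^2 - \mathcal{F}(v,w)^2}{\Psi(|v-w|)} \;=\; \frac{(\mathcal{F}'-\mathcal{F})(\mathcal{F}'+\mathcal{F})}{\Psi(|v-w|)}.
\]
Plugging this in and writing $B=\Psi\cdot\sqrt b\cdot\sqrt b$, the $\Psi$'s cancel and the first displayed identity follows.

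For the quadratic estimate, I would use $(\mathcal{F}+\mathcal{F}')^2\le 2(\mathcal{F}^2+\mathcal{F}'^2)$ and invariance under the pre/post change of variables --- $(\delta\varphi)^2$ is symmetric and $\mathcal{F}'^2=f'f'_*\Psi(|v-w|)$ becomes $ff_*\Psi(|v'-w'|)=\mathcal{F}^2$ --- to reduce everything to bounding $4\iiint f(v)f(w)\Psi(|v-w|)(\delta\varphi)^2 b\dv\dw\dsigma$. The key point is the pointwise estimation of $\delta\varphi$. The trivial bound is $|\delta\varphi|\le 4\|\varphi\|_{L^\infty}$. A smoothness bound is obtained from a Taylor expansion of $\varphi(v')$ and $\varphi(w')$ around $v$ and $w$: using $w'-w=-(v'-v)$ together with $|v'-v|=|v-w|\sin(\theta/2)\le|v-w|(1-\cos\theta)^{1/2}$ (in the paper's angular convention), one obtains
\[
|\delta\varphi| \;\le\; C\,\|D^2_v\varphi\|_{L^\infty}\,|v-w|^2\,(1-\cos\theta)^{1/2}.
\]
Interpolating with respective exponents $1-(s+\eps)$ and $s+\eps$ and applying $a^p b^q\le(a+b)^{p+q}$ yields
\[
(\delta\varphi)^2 \;\le\; C\,(\|\varphi\|_{L^\infty}+\|D^2_v\varphi\|_{L^\infty})^2\,|v-w|^{4(s+\eps)}\,(1-\cos\theta)^{s+\eps}.
\]

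Substituting back, the angular integral decouples and produces the factor $C_b$. The remaining space integral is dominated by $\iint ff_*\,|v-w|^{\gamma+4(s+\eps)}\dv\dw$, which, provided $\eps$ is chosen so that $\gamma+4(s+\eps)\in[0,2]$ (possible thanks to the hypothesis $\gamma\in[-4s,-2s]$), is controlled via the elementary inequality $|v-w|^\alpha\le C(\langle v\rangle^\alpha+\langle w\rangle^\alpha)$ and Fubini, by $C(M_0+E_0)^2$. The main difficulty I foresee is Step~3: getting the right power of $(1-\cos\theta)$ out of $(\delta\varphi)^2$ so as to produce exactly the constant $C_b$. One subtle point is that $\delta\varphi$ only vanishes \emph{linearly} in $\sin(\theta/2)$ near grazing collisions for a generic smooth $\varphi$ (as can be checked on $\varphi(x)=x_1 x_2$, where the cancellation of the first-order Taylor term in the direction of $v-w$ does not occur), so the interpolation exponent $s+\eps$ must be set up carefully, and the polynomial weight $|v-w|^{4(s+\eps)}$ that results forces one to take $\eps$ small enough (e.g.\ $\eps\le(1-s)/2$) for the moment bound to close; this is compatible with $C_b<\infty$, which only requires $\eps>0$.
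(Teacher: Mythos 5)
Your proof is correct and follows essentially the same route as the paper: symmetrize in $(v,w)$, apply the pre/post-collisional change of variables to obtain the weak form with the $\tfrac14\delta\varphi$ factor, factor $f'f'_*-ff_*$ through $\mathcal{F}$ using $|v-w|=|v'-w'|$, bound $\delta\varphi$ by a Taylor estimate that vanishes \emph{linearly} in $\sin(\theta/2)$ (i.e.\ $|\delta\varphi|\lesssim\|D^2\varphi\|_{L^\infty}|v-w|^2\sqrt{1-\cos\theta}$, as you correctly identify), and interpolate against the trivial $L^\infty$ bound to absorb the angular singularity into $C_b$. The only variation is that you interpolate uniformly with exponent $s+\eps$, producing the weight $|v-w|^{\gamma+4(s+\eps)}$ everywhere, whereas the paper splits into $|v-w|\le 1$ (full exponent $1$ on the angular factor, weight $|v-w|^{\gamma+4}\le 1$) and $|v-w|\ge 1$ (exponent $s+\eps$); both versions close the moment bound and both in fact require $\gamma+4(s+\eps)\le 2$, i.e.\ $\eps\le(1-s)/2$, which you state explicitly --- the paper's displayed weight $|v-w|^{\gamma+4s+2\eps}$ is a typo for $\gamma+4s+4\eps$ and hides this restriction, but it is harmless since $C_b<\infty$ holds for every $\eps>0$.
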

\begin{proof}
The first equality is straightforward. As far as the $L^2$-norm is concerned, we write
Moreover, it can be shown through a Taylor expansion \cite{MR1650006} that
\[ |\delta \varphi (v,w,v',w')| \le 2 \|D^2_v \varphi\|_{L^\infty} |v-w|^2 (1-\cos \theta).\]
Let us give details. We follow \cite[p.~286]{MR1650006} by considering $x_1= \frac{v+w}2$ and $r = \frac{|v-w|}2$
and $k = \frac{v-w}{|v-w|}$ and $\sigma = \frac{v'-w'}{|v'-w'|}$, so that
\begin{align*}
  \delta \varphi (v,w,v',w') &= \bigg[ \varphi (x_1 + r \sigma) - \varphi (x_1+r k) \bigg] + \bigg[ \varphi (x_1-r\sigma)  - \varphi (x_1 -rk) \bigg]\\
                             &= \int_0^1 \bigg[ \nabla \varphi (x_1 + r (\tau \sigma + (1-\tau) k)) - \nabla \varphi (x_1 - r (\tau \sigma + (1-\tau)k))   \bigg] \cdot r(\sigma-k) \dd \tau \\
  & =  \int_0^1 \left[\int_{-1}^1 D^2 \varphi (x_1 + \bar \tau r (\tau \sigma + (1-\tau) k)) \dd \bar \tau \right] r (\tau \sigma + (1-\tau) k) \cdot r(\sigma-k) \dd \tau.
\end{align*}
This equality implies that 
\begin{align*}
  |\delta \varphi (v,w,v',w')|& \le 2 \| D^2 \varphi\|_{L^\infty} r^2 \int_0^1  |\tau \sigma + (1-\tau) k| |\sigma-k| \dd \tau\\
                              & \le 2 \| D^2 \varphi\|_{L^\infty} r^2 |\sigma - k | \\
  & \le 2 \| D^2 \varphi\|_{L^\infty} r^2 \sqrt{2(1-\cos \theta)}.
\end{align*}
We used the fact that $|\sigma -k|^2 = 2 - 2 \sigma \cdot k = 2 (1-\cos \theta)$. 

Since $\varphi$ is also bounded, we get
\[
  |\delta \varphi (v,w,v',w')| \le 4 (\|D^2_v \varphi\|_{L^\infty} + \|\varphi\|_{L^\infty}) \min (  |v-w|^2 \sqrt{2(1-\cos \theta)}, 1 ).
\]
This allows us to get the following estimate,
\begin{align*}
& \int_{\R^d \times \R^d \times \Sd} (\mathcal{F}(v,w) + \mathcal{F}(v',w'))^2 (\delta \varphi)^2 (v,w,v',w') b (\cos \theta)
     \dv \dw \dsigma \\
  &\le   C_\varphi^2 \int_{\R^d \times \R^d \times \Sd} \mathcal{F}^2(v,w) \min( |v-w|^4 2(1-\cos \theta),1)  b (\cos \theta)
     \dv \dw \dsigma \\
  \intertext{with $C_\varphi = 4 \|D^2_v \varphi\|_{L^\infty} + 4 \|\varphi\|_{L^\infty}$,}
  & \le  C_\Psi C_\varphi^2   \int_{\R^d \times \R^d} f(v)f(w) \left\{ \int_{\Sd}  \min( |v-w|^4 2(1-\cos \theta),1) b (\cos \theta) \dsigma \right\} |v-w|^\gamma      \dv \dw \\
\intertext{with $C_\Psi$ such that $\Psi_n (z) \le C_\Psi |z|^\gamma$. We now distinguish the cases $|v-w|\le 1$ and $|v-w|\ge 1$,}
  & \le    C_\Psi C_\varphi^2   \int_{\{|v-w| \le 1\}} f(v)f(w) \left\{ \int_{\Sd} 2(1-\cos \theta) b (\cos \theta) \dsigma \right\} |v-w|^{\gamma+4}      \dv \dw \\
  & +  C_\Psi C_\varphi^2   \int_{\{|v-w| \ge 1\}} f(v)f(w) \left\{ \int_{\Sd} 2^{s+\eps} (1-\cos \theta)^{s+\eps} b (\cos \theta) \dsigma \right\} |v-w|^{\gamma+4s + 2\eps}      \dv \dw \\
  \intertext{for $\eps>0$ such that $s+\eps<1$. We now use that $\gamma \ge -4s \ge -4$ and $0 \le \gamma +4 s +2\eps \le 2$ to get,}
  & \le  C_\Psi C_\varphi^2 C_b  \int_{\R^d \times \R^d} f(v)f(w)  (1+|v-w|^{2})   \dv \dw \\
  & \le  2 C_\Psi C_b  (M_0 + E_0)^2 (4 \|D^2_v \varphi\|_{L^\infty} + 4\|\varphi\|_{L^\infty})^2
\end{align*}
with $C_b = \int_{\Sd} (1-\cos \theta)^{s+\eps} b (\cos \theta) \dsigma$. 
\end{proof}

\subsection{Bounded kernel functions}

We recall that the collision operator is defined by,
\[ Q_B(f,f)(v) = \iint_{\Sd \times \R^d} (f(w')f(v') -f (w) f(v) ) B(w-v,\sigma) \dsigma \dw  \]
with
\[ v' = \frac{v+w}2 + \frac{|v-w|}2 \sigma \quad \text{ and } w' = \frac{v+w}2 - \frac{|v-w|}2 \sigma \]
and a function $B \colon \R^d \times \Sd \to [0,+\infty)$. 
As explained in  \cite{MR0339665}, it is easy to construct solutions when $B$ is bounded.
We have to justify that we can get the family of inequalities contained in the definition
of suitable weak solutions. 
\begin{lem}[Existence of solutions for bounded kernel functions and artificial viscosity]\label{l:existence-approx}
  Let $B \colon \R^d \times \Sd \to [0,+\infty)$ be bounded and $\fin \in L^1 (\R^d)$.
  For all $\eps >0$, there exists $f^\eps \in C((0,+\infty), L^1(\R^d))$ with $f^\eps \ge 0$ a.e. such that 
  \[ \partial_t f^\eps = Q_{B} (f^\eps,f^\eps) + \eps \Delta f^\eps \quad \text{ in } \mathcal{D}'((0,+\infty) \times \R^d) \]
  and $f^\eps(0,v) = \fin (v)$.

  If moreover $\fin \ge 0$ and $\fin \ln \fin \in L^1(\R^d)$, then $f^\eps \ge 0$ a.e. in $(0,+\infty) \times \R^d$ and
  $f^\eps(t,\cdot) \ln f^\eps (t,\cdot) \in L^1 (\R^d)$ and $Q_B (f^\eps,f^\eps) \ln f^\eps \in L^1((0,+\infty) \times \R^d)$ and
  \[
    \frac{\mathrm{d}}{\dt} \int_{\R^d} f^\eps \ln f^\eps (t,v) \dv + \int_{\R^d} Q_B (f^\eps,f^\eps) \ln f^\eps \dv + \eps  \int_{\R^d} \frac{|\nabla_v f^\eps|^2}{f^\eps} (t,v) \dv  \le 0
    \quad \text{ in } \mathcal{D}' ((0,+\infty))
  \]
    where the entropy dissipation term is understood in the sense of \eqref{e:dissip-weak}.
\end{lem}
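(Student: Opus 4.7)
The plan is to use a Duhamel-type fixed point argument in $L^1$ to produce $f^\eps$, then obtain non-negativity from the gain/loss splitting of $Q_B$ via a parabolic maximum principle, and finally derive the entropy inequality by multiplying the equation by a truncated logarithm and passing to the limit.

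\textbf{Existence via mild formulation.} Since $B$ is bounded, by some constant $M$, the collision operator satisfies the bilinear estimate $\|Q_B(f,g)\|_{L^1} \le C_M \|f\|_{L^1} \|g\|_{L^1}$. Write $S_\eps(t) = e^{\eps t \Delta}$ for the heat semigroup on $\R^d$ and consider the mild formulation
\[
f^\eps(t) = S_\eps(t) \fin + \int_0^t S_\eps(t-\tau) Q_B(f^\eps,f^\eps)(\tau) \, \mathrm{d} \tau.
\]
A Banach fixed point in $C([0,T], L^1(\R^d))$ produces a local solution for $T>0$ small. Integrating the equation and using that $\int Q_B(f,f) \dv = 0$ and $\int \Delta f^\eps \dv = 0$ (by the dissipative property of $S_\eps$), mass is preserved: $\|f^\eps(t)\|_{L^1} = \|\fin\|_{L^1}$. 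This uniform $L^1$ bound allows us to iterate the fixed point on adjacent intervals and extend the solution globally in time. Parabolic smoothing from $S_\eps(t)$ gives $f^\eps \in C((0,+\infty), L^1 \cap L^\infty)$ once $\fin \in L^1$.

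\textbf{Non-negativity.} Decompose $Q_B(f,f) = Q_B^+(f,f) - f \, Lf$ with the loss kernel $Lf(v) := \int_{\R^d \times \Sd} f(w) B(v-w,\sigma) \, \mathrm{d}\sigma \, \mathrm{d}w$, both non-negative when $f \ge 0$. Consider the iterative linear problem
\[
\partial_t f^{n+1} - \eps \Delta f^{n+1} + (Lf^n)\, f^{n+1} = Q_B^+(f^n, f^n), \qquad f^{n+1}(0,\cdot) = \fin,
\]
starting from $f^0 \equiv \fin \ge 0$. By the maximum principle for linear parabolic equations with bounded non-negative zero-order coefficient, $f^n \ge 0$ and $Q_B^+(f^n,f^n) \ge 0$ force $f^{n+1} \ge 0$. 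The sequence $(f^n)$ converges in $C([0,T], L^1)$, for $T$ small, to the mild solution constructed above, hence $f^\eps \ge 0$ a.e., and by iteration on consecutive intervals, on $(0,+\infty)\times\R^d$.

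\textbf{Entropy inequality.} For $\delta > 0$, multiply the equation by $\ln(f^\eps + \delta)$ and integrate in $v$. The viscous term integrates by parts (legitimate since $f^\eps$ is locally bounded and smooth in $v$ for $t>0$ by parabolic regularization) to give $-\eps \int |\nabla_v f^\eps|^2/(f^\eps + \delta) \dv$. For the collision term, the usual $(v,w,v',w')$ symmetrization yields
\[
\int_{\R^d} Q_B(f^\eps,f^\eps) \ln(f^\eps + \delta) \dv = -\frac14 \iiint_{\R^d \times \R^d \times \Sd} (f(w')f(v') - f(w)f(v)) \ln \tfrac{(f'+\delta)(f^{\prime}_*+\delta)}{(f+\delta)(f_*+\delta)} B \dsigma \dv \dw,
\]
which is non-positive by monotonicity of $\ln$. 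Integrating over $[s_1, s_2]$ and using that for $\delta > 0$ all quantities are controlled by the $L^1 \cap L^\infty$ bounds on $f^\eps$ and the boundedness of $B$, I then pass to the limit $\delta \to 0$: the left-hand side converges by dominated convergence (recall $\fin \ln \fin \in L^1$ is propagated since $x \ln x$ is bounded below), while the two dissipation terms are non-negative, so Fatou's lemma produces the announced inequality in the distributional sense, and in particular $Q_B(f^\eps,f^\eps) \ln f^\eps \in L^1_{\mathrm{loc}}$.

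\textbf{Main obstacle.} The technically delicate step is the passage $\delta \to 0$ in the entropy dissipation: one needs enough compactness and lower semicontinuity to identify the limit with the expression in \eqref{e:dissip-weak}. This is handled by Fatou's lemma applied to the non-negative integrand $(a-b)\ln(a/b)$, but the control on the time derivative of $\int f^\eps \ln f^\eps$ requires that the regularized entropy $\int (f^\eps + \delta) \ln(f^\eps + \delta) \dv$ be uniformly Lipschitz in $\delta$, which in turn follows from the uniform $L^\infty_{\mathrm{loc}}$ bound on $f^\eps$ given by parabolic regularization.
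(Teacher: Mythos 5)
Your existence step uses Banach fixed point in $C([0,T],L^1)$, whereas the paper uses a pointwise monotone iteration (Arkeryd-type): it writes $\bar Q(f,f)=Q_B^+(f,f)+f\,((\mathcal B-\bar B)\ast f)$, which is monotone in $f$, absorbs a term $\mathcal B m f$ on the left, shows $f^{n-1}\le f^n$ pointwise, and then invokes Beppo--Levi to pass to the limit. Mass conservation along the iteration is obtained there by observing that $\mu_n(t)=\int f^n\,\dv$ solves $\dot\mu=\mathcal B\mu(\mu-m)$ with $\mu_n(0)=m$, hence $\mu_n\equiv m$. Both routes reach a solution, but the monotone scheme gives non-negativity, convergence and mass conservation in one stroke, whereas your presentation has a mild circularity: you invoke $\|f^\eps(t)\|_{L^1}=\|\fin\|_{L^1}$ (needed for global extension) before establishing $f^\eps\ge0$, yet $\int Q_B(f,f)\,\dv=0$ only identifies $\int f^\eps\,\dv$, not $\int |f^\eps|\,\dv$, unless you already know the sign. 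This is fixable by reordering (establish non-negativity of the local solution first, then use mass conservation to continue), and by actually verifying that your auxiliary linear iteration is Cauchy and converges to the fixed point, which you assert but do not prove.

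The genuine gap is in the entropy inequality. You claim that
\[
\int_{\R^d} Q_B(f^\eps,f^\eps)\,\ln(f^\eps+\delta)\,\dv
=-\tfrac14\iiint\bigl(f'f'_*-ff_*\bigr)\,\ln\frac{(f'+\delta)(f'_*+\delta)}{(f+\delta)(f_*+\delta)}\,B
\]
is non-positive ``by monotonicity of $\ln$.'' This is false: monotonicity of $\ln$ would give the sign if the two factors were $(a-b)$ and $\ln\bigl((a+\delta)/(b+\delta)\bigr)$ for the \emph{same} pair $(a,b)$, but here the first factor is $f'f'_*-ff_*$ while the argument of $\ln$ is built from the $\delta$-shifted \emph{individual} factors, and $f'f'_*>ff_*$ does not imply $(f'+\delta)(f'_*+\delta)>(f+\delta)(f_*+\delta)$. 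For instance $f'=100$, $f'_*=0.01$, $f=f_*=1.01$ gives $f'f'_*=1>1.0201=ff_*$ reversed, while for small $\delta>0$ the shifted products already reverse the inequality; concretely with $f'=100$, $f'_*=0.01$, $f=f_*=1.01$, $\delta=0.001$ one has $f'f'_*=1<1.0201=ff_*$ but $(f'+\delta)(f'_*+\delta)\approx1.1>1.022\approx(f+\delta)^2$. So the integrand changes sign and the dissipation term you write down has no definite sign for fixed $\delta>0$. You need to either regularize the entropy density differently (so that the weak formulation of $Q_B$ paired with $\varphi_\delta'(f)$ has a sign, plus a controllable error), or split $\int(Q^+-Q^-)\ln(f^\eps+\delta)\,\dv$ and pass $\delta\to0$ by dominated convergence using that for $t>0$ the viscous solution is smooth, strictly positive, bounded, and has finite second moment; in either case the $\ge0$ sign you want appears only in the limit, not at fixed $\delta$. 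The paper sidesteps the issue by calling it a ``standard approximation procedure'' without this particular miscomputation.
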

\begin{proof}
  We split the proof in several steps. We first write the collision operator into gain and loss operators.
  We then construct solutions by an iterative scheme. We finally pass to the limit thanks to the monotone
  convergence theorem.
  
  \paragraph{Splitting the collision operator.}
  We  consider $m = \int_{\R^d} \fin (v) \dv$. We then split $Q_B$ into the gain part and the loss part,
  \( Q_B (f,f) = Q_B^+ (f,f) - Q_B^- (f,f) \)
  with
  \begin{align*}
    Q_B^+ (f,f)(v) & = \iint_{\Sd \times \R^d} f(w')f(v') B(w-v,\sigma) \dsigma \dw  \\
    Q_B^- (f,f)(v) & = \iint_{\Sd \times \R^d} f (w) f(v)  B(w-v,\sigma) \dsigma \dw  \\
                   & = f(v) \bigg( f \ast_v \bar B \bigg)
  \end{align*}
  where
  \( \bar B (v) = \int_{\Sd} B(v,\sigma) \dd \sigma.\)
  Since $B$ is bounded, so is $\bar B$. Moreover, $Q_B^+ (f,f)$ is well-defined and essentially bounded
  for $f \in L^1 (\R^d)$. Since $\int_{\R^d} Q_B (f,f) (v) \dv =0$, we have in particular,
  \[ \int_{\R^d} Q_B^+ (f,f) (v) \dv = \int_{\R^d} Q_B^- (f,f) (v) \dv.\]

  Let $\mathcal{B} >0$ such that for all $z,\sigma$,
  we have $0 \le \bar B(z,\sigma) \le \mathcal{B}$. We then consider the equation
  \[ \partial_t f + \mathcal{B} m f = Q_B^+ (f,f) + f ((\mathcal{B}-\bar B) \ast_v f) := \bar Q (f,f).\]
  We remark that $\bar Q (f,f)$ is well-defined for $f \in L^1 (\R^d)$ and $\bar Q (f,f) \in L^1 \cap L^\infty (\R^d)$ and
  \[ \int_{\R^d} \bar Q (f,f) (v) \dv = \mathcal{B} \left( \int_{\R^d} f(v) \dv \right)^2. \]
  It is also monotone increasing in $f$.

  \paragraph{The iterative scheme.}
  We then construct a solution  of the previous equation by the following iterative scheme, 
  \[ \forall n \ge 1, \qquad \partial_t f^n + \mathcal{B} m f^n = \bar Q (f^{n-1},f^{n-1}) + \eps \Delta f^n\]
  with $f^0 = e^{-\mathcal{B} mt+\eps t \Delta} \fin$. More precisely, we simply define for $f^{n-1} \in C((0,+\infty), L^1 (\R^d))$, 
  \[ \forall n \ge 1, \qquad f^n (t,v) = e^{-\mathcal{B} m t+\eps t \Delta} \fin (v) + \int_0^t e^{-\mathcal{B} m (t-s)+ \eps (t-s)\Delta}  \bar Q (f^{n-1},f^{n-1}) (s,v) \dd s.\]
  We have in particular $f^n \in C ((0,+\infty),L^1 (\R^d))$. 
  We also remark that $f^0 \in C((0,+\infty),L^1(\R^d))$ and $f^0 \le f^1$. Moreover, if $f^{n-1} \in C((0,+\infty),L^1 (\R^d))$ and $f^{n-2} \le f^{n-1}$
  then we have,
  \begin{align*}
    f^n (t,v) &\ge e^{-\mathcal{B} m t+\eps t \Delta} \fin (v) + \int_0^t e^{-\mathcal{B} m (t-s)+\eps (t-s)\Delta}  \bar Q (f^{n-2},f^{n-2}) (s,v) \dv = f^{n-1}(t,v),\\
    \int_{\R^d} f^n (t,v) \dv &= e^{-\mathcal{B} m t} m + \mathcal{B} \int_0^t e^{-\mathcal{B} m (t-s)} \left( \int_{\R^d} f^{n-1}(t,v) \dv \right)^2 \dd s.
  \end{align*}
  The second equality implies that $\mu_n (t) = \int_{\R^d} f^n (t,v) \dv$ satisfies the ODE
  \[ \dot \mu = \mathcal{B} \mu (\mu -m).\]
  Since $\mu_n(0) = m$, we conclude that $\mu_n (t) =m$ for $t>0$, that is to say,
  \[ \int_{\R^d } f^n (t,v) \dv = m.\]

\paragraph{Passing to the limit.}
If $f^\eps(t,v)$ denotes $\lim_{n \to \infty} f^n (t,v)$. For all $t>0$, Beppo-Levi's theorem implies that $f^\eps (t,\cdot) \in L^1 (\R^d)$.
Moreover, $f^\eps (t,v) = \sup_{n \ge 0} f^n (t,v)$ and in particular, $f^\eps \colon (0,+\infty) \to L^1 (\R^d)$ is lower semi-continuous.
We also have, for all $t > 0$,
  \[\int_{\R^d} f^\eps(t,v) \dv = m.\]
In particular, $f^\eps \in L^\infty ((0,+\infty), L^1 (\R^d))$. 
  
  Remark the sequence $\{ \bar Q (f^n,f^n) \}_n$ is also monotone increasing and converges towards $\bar Q (f^\eps,f^\eps)$
  (again by Beppo-Levi's theorem for $t >0$ fixed). We conclude that, at the limit, we have for all $t>0$ the following equality in
  $L^\infty ((0,+\infty),L^1(\R^d))$, 
  \[ f^\eps (t,v) = e^{-\mathcal{B} m t+\eps t \Delta} \fin (v) + \int_0^t e^{-\mathcal{B} m (t-s)+\eps (t-s) \Delta}  \bar Q (f^\eps,f^\eps) (s,v) \dd s.\]
  In particular, $f^\eps \in C((0,+\infty), L^1 (\R^d))$ and it satisfies,
  \[ \partial_t f^\eps + \mathcal{B} m f^\eps = \bar Q (f^\eps,f^\eps) + \eps \Delta f^\eps \quad \text{ in } \mathcal{D}'((0,+\infty) \times \R^d) .\]
  Conservation of mass also implies that $\bar Q (f^\eps,f^\eps) = Q_B (f^\eps,f^\eps) + \mathcal{B} m f^\eps$ and we finally get,
  \[ \partial_t f^\eps  = Q_B (f^\eps,f^\eps) + \eps \Delta f^\eps\quad \text{ in } \mathcal{D}'((0,+\infty) \times \R^d). \]

  \paragraph{Entropy dissipation.}
  If moreover $\fin$ is such that $\fin \ge 0$, we immediately get $f^\eps \ge 0$ from the representation formula we used above.
  Moreover, if $\fin \ln \fin \in L^1 (\R^d)$, then we can use a standard approximation procedure to get,
\[
    \frac{\mathrm{d}}{\dt} \int_{\R^d} f^\eps \ln f^\eps (t,v) \dv + \int_{\R^d} Q_B (f^\eps,f^\eps) \ln f^\eps \dv
    + \eps  \int_{\R^d} \frac{|\nabla_v f^\eps|^2}{f^\eps} (t,v) \dv  \le 0
    \quad \text{ in } \mathcal{D}' ((0,+\infty))
  \]
  where $\int_{\R^d} Q_B (f^\eps,f^\eps) \ln f \dv$ is understood in the sense of \eqref{e:dissip-weak}. 
\end{proof}

\subsection{Approximate kernels}

\begin{lem}[Existence of solutions for non-cut off kernel functions and artificial viscosity]\label{l:existence-approx-2}
  Let the kernel function $B$ be of non-cut off type, that is to say
  $B(z,\cos \theta) = |z|^\gamma b (\cos \theta)$ with $\gamma > -d$ and $b$ satisfying \eqref{e:b} for some $s \in (0,1)$. 
  For all $\eps >0$, there exists
  \[f^\eps \in L^\infty((0,+\infty), L^1_2(\R^d)) \cap L^1 ((0,+\infty), W^{1,1} (\R^d))\]
  with $f^\eps \ge 0$ a.e. such that 
  \[ \partial_t f^\eps = Q_{B} (f^\eps,f^\eps) + \eps \Delta f^\eps \quad \text{ in } \mathcal{D}'((0,+\infty) \times \R^d) \]
  and $f^\eps(0,v) = \fin (v)$. The collision operator $Q_B (f,f)$ is understood in the sense of Lemma~\ref{l:operator-weak}.

  Moreover, 
  \[ \frac{\dd}{\dt} \int_{\R^d} f^\eps \ln f^\eps (t,v) \dv - \int_{\R^d} Q_B (f^\eps,f^\eps) \ln f^\eps (t,v) \dv \le 0 \text{ in } \mathcal{D}'((0,+\infty))\]
  where the entropy dissipation is understood in the following sense,
  \begin{multline*}
    -\int_{\R^d} Q_B (f^\eps,f^\eps) \ln f^\eps (v) \dv = \\\frac12 \int_{\R^d \times \R^d \times \Sd}
      (f^\eps(v')f^\eps(w') -f^\eps(v)f^\eps(w)) \ln \frac{f^\eps(v')f^\eps(w')}{f^\eps(v)f^\eps(w)} B(|v-w|,\cos \theta) \dv \dw \dd \sigma
     \ge 0.
   \end{multline*}
   
  Finally, for any $a>0$, we have
  \[ \frac{\mathrm{d}}{\mathrm{d}t} \int_{\R^d} \varphi_a (f^\eps(t,v)) \dv \le  - \int_{\R^d \times \R^d} d_{\varphi_a} (f^\eps(v),f^\eps(v')) K(v,v') \dv' + c_c \int_{\R^d} \Phi_a (f^\eps) (f^\eps \ast_v |\cdot|^\gamma) \dv \]
  where $\varphi_a (r) = (r-a)_+$ and
  the kernel $K$ is defined by \eqref{e:kernel} and $c_c$ is the positive constant from the cancellation lemma. 
\end{lem}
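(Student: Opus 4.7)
The plan is to build $f^\eps$ by approximating the non-cut-off kernel $B$ by bounded kernels, applying Lemma~\ref{l:existence-approx}, and passing to the limit. I would set $B_N(z,\sigma)=|z|^\gamma \mathbf{1}_{\{1/N\le|z|\le N\}} b_N(\cos\theta)$ with $b_N(\cos\theta)=\min(b(\cos\theta),N)$, so that each $B_N$ is bounded. Lemma~\ref{l:existence-approx} then provides $f^{\eps,N}\in C((0,+\infty),L^1(\R^d))$, non-negative, conserving mass, with energy growing at most linearly (the viscous term contributes $2d\eps m_0$), and satisfying the entropy identity
\[
\frac{\mathrm{d}}{\dt}\int_{\R^d} f^{\eps,N}\ln f^{\eps,N}\dv+\int_{\R^d} Q_{B_N}(f^{\eps,N},f^{\eps,N})\ln f^{\eps,N}\dv+\eps\int_{\R^d}\frac{|\nabla_v f^{\eps,N}|^2}{f^{\eps,N}}\dv\le 0
\]
in the sense of distributions, where the collision term is non-negative.

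Integrating this identity in time yields uniform (in $N$) bounds on the entropy dissipation and on $\eps\int_0^T\!\int|\nabla_v f^{\eps,N}|^2/f^{\eps,N}\dv\dt$; the latter means that $\sqrt{f^{\eps,N}}$ is bounded in $L^2((0,T);H^1(\R^d))$. Combined with a time-derivative estimate obtained from the weak form of Lemma~\ref{l:operator-weak} (which controls $\langle Q_{B_N}(f^{\eps,N},f^{\eps,N}),\varphi\rangle$ in terms of the square root of the entropy dissipation times a test-function seminorm), this provides local $L^1_{t,v}$ compactness of $\{f^{\eps,N}\}$ via Aubin--Lions, hence, up to extraction, almost everywhere convergence to some $f^\eps$. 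Passing to the limit in the equation relies on rewriting $Q_{B_N}$ as in Lemma~\ref{l:operator-weak}: the factor $(\mathcal{F}^N(v',w')-\mathcal{F}^N(v,w))\sqrt{b_N}$ is bounded in $L^2$ by the entropy dissipation, the factor $(\mathcal{F}^N(v,w)+\mathcal{F}^N(v',w'))\sqrt{b_N}\,\delta\varphi$ is dominated as in that lemma and converges a.e.\ by the a.e.\ convergence of $f^{\eps,N}$, and their product passes to the limit by a weak-strong argument. The entropy inequality passes by Fatou's lemma since the dissipation is non-negative and $\int f\ln f\dv$ is lower semicontinuous under $L^1$ convergence combined with energy bounds.

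For the family of Kruzhkhov-type inequalities, I would first test the $(\eps,N)$-equation against $\varphi'(f^{\eps,N})$ for a smooth non-decreasing convex $\varphi$ with $\varphi(0)=\varphi'(0)=0$. Repeating the formal computation from the introduction (legitimate because $B_N$ is bounded and $f^{\eps,N}$ is smooth enough thanks to the viscosity) and integrating the viscous term by parts gives
\[
\frac{\mathrm{d}}{\dt}\!\int_{\R^d}\!\varphi(f^{\eps,N})\dv+\!\iint_{\R^d\times\R^d}\!\! d_\varphi(f^{\eps,N},f^{\eps,N\prime})K_N\dv\dv'+\eps\!\int_{\R^d}\!\varphi''(f^{\eps,N})|\nabla f^{\eps,N}|^2\dv\le c_c^N\!\int_{\R^d}\!\Phi(f^{\eps,N})(f^{\eps,N}\ast_v|\cdot|^\gamma)\dv,
\]
where $K_N$ is the kernel associated with $B_N$ via \eqref{e:kernel} and $c_c^N\to c_c$ by the explicit formula in Lemma~\ref{l:cancel}. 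Dropping the non-negative viscous term and passing $N\to+\infty$, the dissipation term is handled by Fatou (writing the integrand as the square of the non-negative $\sqrt{d_\varphi K_N}$ times a bounded factor, using a.e.\ convergence and $K_N\uparrow K$ a.e.), while the right-hand side converges by dominated convergence thanks to $\varphi$-growth control and Lemma~\ref{l:convol}. Finally, one specialises to $\varphi=\varphi_a$ by approximating $(r-a)_+$ by a decreasing family of smooth non-decreasing convex functions $\varphi^{(n)}$ with $\varphi^{(n)}(0)=\varphi^{(n)\prime}(0)=0$, and one passes to the limit using the monotonicity of both sides of the inequality in $\varphi$ (Lemma~\ref{l:breg} guarantees that the Bregman distance behaves well).

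The main obstacle is the passage $N\to+\infty$ in the bilinear collision term: on the one hand the singularities in $b$ and in $|\cdot|^\gamma$ must be absorbed by the entropy dissipation uniformly in $N$; on the other hand one needs enough a.e.\ compactness of $f^{\eps,N}$ to identify the weak limit of the nonlinearity. The viscous regularisation is precisely what makes both work, via the Fisher-information-like quantity and the representation of Lemma~\ref{l:operator-weak}; a secondary technical point is ensuring that the Kruzhkhov inequality for $\varphi_a$ still holds after approximation, which follows because the $\varphi^{(n)}$ can be chosen so that $\Phi^{(n)}(r)\to a\mathbf{1}_{\{r>a\}}$ pointwise in a monotone way and $d_{\varphi^{(n)}}\to d_{\varphi_a}$ monotonically from below.
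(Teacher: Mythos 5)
Your plan follows the same route as the paper: cut off the kernel to make it bounded, apply Lemma~\ref{l:existence-approx}, get compactness from the viscous Fisher-information term and the entropy dissipation via Lemma~\ref{l:operator-weak}, then pass to the limit in the convex-function inequalities by Fatou on the dissipation side. The two places where you gloss over something that the paper handles explicitly are worth pointing out.

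First, the cancellation term. Once you truncate the kernel (whether via $\mathbf{1}_{\{1/N\le|z|\le N\}}$ or the paper's smooth cut $X(n|z|)$), the associated $\mathcal{R}_N$ is \emph{no longer} of the form $c_c^N|z|^\gamma$: in Lemma~\ref{l:cancel} the argument of $\Psi_N$ gets rescaled by $\cos(\theta/2)^{-1}$, so the two indicator/cutoff functions no longer match and the $\theta$-integral does not factor. The paper deals with this by splitting $\mathcal{R}_n=\mathcal{R}_n^1+\mathcal{R}_n^2$ and proving $\mathcal{R}_n^1\le c_c|z|^\gamma$ uniformly and $\mathcal{R}_n^2\le\bar c_c|z|^\gamma\mathbf{1}_{\{|z|\le 1/n\}}$, then showing the contribution of $\mathcal{R}_n^2$ vanishes in the limit. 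Your ``$c_c^N\to c_c$ by the explicit formula'' isn't quite right as stated and would need this kind of decomposition.

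Second, the limit in the source term $\int\Phi_a(f^{\eps,N})(f^{\eps,N}\ast_v|\cdot|^\gamma)\dv$. Dominated convergence is not directly available: $f^{\eps,N}\to f^\eps$ a.e.\ gives no integrable majorant of $f^{\eps,N}(f^{\eps,N}\ast_v|\cdot|^\gamma)$. What the paper actually uses is a Vitali-type argument: uniform integrability coming from the interpolation bound \eqref{e:piota} (which in turn comes from the $L^2_tH^1_v$ control of $\sqrt{f^{\eps,N}}$ supplied by the artificial viscosity), together with a tightness estimate using the second-moment bound \eqref{e:energy}. You should replace ``dominated convergence'' by this uniform-integrability-plus-tightness step.

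Your extra step of first working with smooth convex $\varphi$ and then approximating $\varphi_a$ is a reasonable way to make the computation from the introduction rigorous; the paper is terser there (``repeat the computation''). Just note that the claimed monotone convergence $d_{\varphi^{(n)}}\uparrow d_{\varphi_a}$ is not automatic from $\varphi^{(n)}\downarrow\varphi_a$ (the Bregman distance is not monotone in $\varphi$), but pointwise convergence together with Fatou on the dissipation term is enough, so this is a cosmetic issue rather than a real gap.
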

\begin{proof}
We consider approximate kernel functions that are bounded and we then pass to the limit. 
  
\paragraph{Approximate kernel functions.}
We  consider bounded functions $\{B_n\}_{n}$ approximating the original function $B$ in the non-cut off case, 
\[B_n(z,\cos \theta) = X(n|z|)|z|^\gamma b (\cos \theta) \un_{|\sin (\theta /2)| \ge 1/n}\]
for some $C^\infty$ function $X \colon [0,+\infty) \to [0,1]$ supported in $[1/2,+\infty)$, $X=1$ in $[1,+\infty)$ and $0 \le X' \le 3$. 
The function $B_n$ is bounded since $X (n|z|) >0$ implies that $|z| \ge 1/(2n)$, ensuring that $|z|^\gamma$ is bounded from above.
Moreover,
\[ \forall n \ge 1, B_n (z,\cos \theta) \le B_{n+1} (z,\cos \theta).\]

\paragraph{Approximate lower order term.}
As far as $\mathcal{R}_n$ is concerned (see Lemma~\ref{l:cancel}),
we use the special form of $B_n$ in order to write, with $\cos(\theta/2) = \left(\frac{1-\sigma \cdot e}2\right)^{\frac12}$,
\begin{align*}
  \mathcal{R}_n (z)  = &
     |z|^\gamma \int_{\Sd} \left(\cos(\theta/2)^{-d-\gamma} X \left(\frac{n |z|}{|\cos(\theta/2)|}\right) - X(n|z|)\right)  b (\cos \theta) \un_{|\sin (\theta /2)| \ge 1/n} \dsigma \\
                    = & |z|^\gamma X (n|z|) \int_{\Sd} \left(\cos(\theta/2)^{-d-\gamma} -1 \right)  b (\cos \theta) \un_{|\sin (\theta /2)| \ge 1/n} \dsigma \\
  + &  |z|^\gamma \int_{\Sd} \cos(\theta/2)^{-d-\gamma} \left( X \left(\frac{n |z|}{|\cos(\theta/2)|}\right) - X(n|z|)\right)  b (\cos \theta) \un_{|\sin (\theta /2)| \ge 1/n} \dsigma \\
= & \mathcal{R}_n^1 (z) + \mathcal{R}_n^2 (z).
\end{align*}
Since $0 \le X \le 1$, we immediately see that
\begin{equation}\label{e:r1} \mathcal{R}_n^1 (z) \le c_c |z|^\gamma 
\end{equation}
with $c_c = \int_{\Sd} \left(\cos(\theta/2)^{-d-\gamma} -1 \right)  b (\cos \theta) \dsigma < +\infty$ (see Lemma~\ref{l:cancel}). 
As far as $\mathcal{R}_n^2$ is concerned, we use the fact that $X'$ is supported in $[1/2,1]$ and $0 \le X' \le 3$ in order to write
\[ \mathcal{R}_n^2 (z) = 0 \text{ if } |z| \ge 1/n \]
and for $|z| \le 1/n$,
\begin{align*}
  \mathcal{R}_n^2 (z) & \le 3 |z|^\gamma  \int_{\Sd} \cos(\theta/2)^{-d-\gamma} \left(\frac{1}{|\cos(\theta/2)|} - 1\right)   b (\cos \theta) \un_{|\sin (\theta /2)| \ge 1/n} \dsigma \\
& \le \bar c_c |z|^\gamma
\end{align*}
with $\bar c_c = 3 \int_{\Sd} \cos(\theta/2)^{-d-\gamma} \left(\frac{1}{|\cos(\theta/2)|} - 1\right)   b (\cos \theta)  \dsigma$. 
We thus proved that
\begin{equation}
  \label{e:r2}
  \mathcal{R}_n^2 (z) \le \bar c_c |z|^\gamma \un_{\{ |z| \le 1/n\}}.
\end{equation}

\paragraph{Compactness and strong convergence.}
Let $f^n$ be given by Lemma~\ref{l:existence-approx} where $B= B_n$. In particular, we write the entropy dissipation inequality,
\begin{equation}
  \label{e:entropy-dissip}
    \frac{\mathrm{d}}{\dt} \int_{\R^d} f^n \ln f^n (t,v) \dv + \int_{\R^d} Q_{B_n} (f^n,f^n) \ln f^n \dv + \eps  \int_{\R^d} \frac{|\nabla_v f^n|^2}{f^n} (t,v) \dv  \le 0    \quad \text{ in } \mathcal{D}' ((0,+\infty)).
\end{equation}
It ensures that $\nabla_v \sqrt{f^n}$ is bounded in $L^2 ((0,+\infty) \times (\R^d))$. In turn, this implies that $\nabla_v f^n$ is bounded in $L^1((0,T) \times \R^d)$ and
\[ f^n \text{ is bounded in } L^1((0,T), W^{1,1} (\R^d)) \text{ for all } T>0. \]
In particular,
\( f^n \text{ is bounded in } L^1((0,T),L^{\frac{d}{d-2}} (\R^d) )\text{ for all } T>0. \) Since this sequence is also bounded in $L^\infty ((0,T),L^1(\R^d))$, we conclude by interpolation that for all $\iota >0$,
\begin{equation}
  \label{e:piota}
    f^n \text{ is bounded in } L^{1+\iota} \left((0,T), L^{p_\iota} (\R^d)\right) \text{ with } p_\iota = \frac{d(1+\iota)}{d(1+\iota)-2}.
\end{equation}

Recalling that $B_n (z, \cos \theta) = \Psi_n (|z|) b_n (\cos \theta)$,  we  have 
for $\mathcal{F} (t,v,w) = \sqrt{ f(t,v) f(t,w) \Psi_n (|v-w|)}$, 
\begin{align*}
  &\iiint_{(0,+\infty) \times \R^d \times \R^d \times \Sd} \left(\mathcal{F}(t,v',w') - \mathcal{F}(t,v,w) \right)^2 b_n (\cos \theta) \dt \dv \dw \dsigma\\
  & \le \int_{(0,+\infty) \times \R^d} Q_{B_n} (f^n,f^n) \ln f^n \dt \dv  \\
  & \le \int_{\R^d} \fin \ln \fin (v) \dv \\
  & \le H_0
\end{align*}
thanks to Lemma~\ref{l:dissip} and \eqref{e:hydro}.

Moreover, for $\psi \in \mathcal{D} ((0,+\infty) \times \R^d)$,
\[
  \int_{(0,+\infty) \times \R^d} f^n (t,v) \partial_t \psi (t,v) \dt \dv =  \eps \int_{(0,+\infty) \times \R^d} \nabla_v f^n \cdot \nabla_v \psi \dt \dv - \int_{(0,+\infty) \times \R^d} Q_{B_n} (f^n,f^n) \psi \dt \dv
\]
where we use Lemma~\ref{l:operator-weak} to define the last term:
if $\delta \psi (t,v,w,v',w') = \psi (t,v)+\psi (t,w) - \psi (t,v') - \psi (t,w')$, 
we have
\begin{multline*}
  \int_{(0,+\infty) \times \R^d} Q_{B_n} (f^n,f^n) \psi \dt \dv \\
 = \frac14 \int_{(0,+\infty) \times \R^d \times \R^d \times \Sd} (\mathcal{F}(t,v',w') - \mathcal{F}(t,v,w))\sqrt{b_n (\cos \theta)} \\
  \times  (\mathcal{F}(t,v,w) + \mathcal{F}(t,v',w')) \delta \psi (t,v,w,v',w')  \sqrt{b_n (\cos \theta)} \dt \dv \dw \dsigma .
\end{multline*}
In particular, if $\psi$ is supported in $(0,T) \times \R^d$, Lemma~\ref{l:operator-weak} implies that
\[
  \left|\int_{(0,+\infty) \times \R^d} Q_{B_n} (f^n,f^n) \psi \dt \dv \right|^2  \le  4^3 C_b H_0    (M_0 + E_0)^2 ( \|D^2_v \psi\|_{L^\infty} + \|\psi\|_{L^\infty})^2 T
\]
where $E_0,M_0$ and $H_0$ appear in \eqref{e:hydro}. 
We thus get,
  \begin{multline*}
    \left| \int_{(0,+\infty) \times \R^d} f^n (t,v) \partial_t \psi (t,v) \dt \dv \right| \le \frac12 \eps \sqrt{T} \sqrt{M_0} H_0 \|\nabla_v \psi \|_{L^\infty ((0,+\infty)\times \R^d)}  \\
     + 2^3 \sqrt{T} \sqrt{C_b H_0}    (M_0 + E_0) ( \|D^2_v \psi\|_{L^\infty} + \|\psi\|_{L^\infty}).
  \end{multline*}
  In particular,
  \begin{equation}
    \label{e:dtf-bound}
    \partial_t f^n \text{  is bounded in } L^1 ((0,T),W^{-2,1} (\R^d)) \text{ for all } T >0.
  \end{equation}
  
Aubin's lemma then implies that $f^n$ is relatively compact in $L^1 ((0,T), L^1 (B_R))$ for all $T>0$ and $R>0$.
In particular, we can extract a subsequence $n_j$ such that $f^{n_j}$ converges almost everywhere towards a function
$f \in L^1_{\mathrm{loc}} ((0,+\infty) \times \R^d)$. In the remainder of the proof,
we drop the subscript $j$ corresponding to the subsequence for clarity. 

\paragraph{Second moments.}
For bounded kernels $K_n$, we can use the fact that $\int_{\R^d} Q_{B_n} (f,f) \dv =0$ in order to get
that
\[
  \frac{\dd}{\dt} \int_{\R^d} f^n (t,v) |v|^2 \dv \le \eps \int_{\R^d} f^n (t,v) \Delta (|v|^2)
  \le 2d\eps M_0
\]
where $M_0$ is the initial mass, see \eqref{e:hydro}. 
In particular,  we have for a.e. $t >0$,
\begin{equation}
  \label{e:energy}
 \int_{\R^d} f^n (t,v) |v|^2 \dv \le 2d \eps M_0  + E_0
\end{equation}
where $E_0$ comes from \eqref{e:hydro}. 

\paragraph{The $\varphi_a$-inequalities.}
We can repeat the computation presented in the introduction and apply it when $B=B_n$ and we get,
\begin{multline*}
  \frac{\mathrm{d}}{\mathrm{d}t} \int_{\R^d} \varphi_a (f^n(t,v)) \dv \\
  \le  - \int_{\R^d \times \R^d} d_{\varphi_a} (f^n(v),f^n(v')) K_n(v,v') \dv \dv' +  \int_{\R^d} \Phi_a (f^n) (f^n \ast_v \mathcal{R}_n) \dv \text{ in } \mathcal{D}'((0,+\infty))
\end{multline*}
where the kernel $K_n$ is defined by \eqref{e:kernel} and $c_c$ is the positive constant from the cancellation lemma.

We can use Fatou's lemma in order to pass to the infimum limit in the dissipation term: for all non-negative $\psi \in \mathcal{D}((0,+\infty))$, 
\begin{multline*}
  \int_{\R} \int_{\R^d \times \R^d} d_{\varphi_a} (f(v),f(v')) K(v,v') \dv \dv' \psi (t) \dt \\
  \le \liminf_{n \to \infty} \int_{\R} \int_{\R^d \times \R^d} d_{\varphi_a} (f^n(t,v),f^n(t,v')) K_n(t,v,v') \dv \dv' \psi (t) \dt. 
\end{multline*}

Thanks to almost everywhere convergence of $f^n$ in $(0,+\infty) \times \R^d$, the entropy estimate~\eqref{e:entropy-dissip} and second moment estimate~\eqref{e:energy}, we conclude that
\[ \int_{\R^d} \varphi_a(f^n(t,v)) \dv \to \int_{\R^d} \varphi_a(f(t,v)) \dv \text{ in } \mathcal{D}'((0,+\infty)).\]

As far as the source term is concerned, we write for a non-negative $\psi \in \mathcal{D}((0,+\infty))$, 
\[ \int_{\R} \int_{\R^d} \Phi_a (f^n) (f^n \ast_v \mathcal{R}_n) \dv \psi (t) \dt \le C_{\mathcal{R}} \int_{\R} \int_{\R^d \times \R^d} \Phi_a (f^n (t,v)) f^n (t,w) |v-w|^\gamma  \psi (t) \dv \dw \dt.\]
The integrand converges almost everywhere towards $\Phi_a (f(t,v)) f(t,w)  |v-w|^\gamma \psi (t)$.
We now prove that it is uniformly integrable. In order to do so, we consider $\iota >0$ small and we write,
\begin{align*}
  \int_{\R} \int_{\R^d \times \R^d} \Phi_a^{1+\iota} (f^n (t,v))
  &(f^n)^{1+\iota} (t,w) |v-w|^{\gamma(1+\iota)}  \psi^{1+\iota} (t) \dv \dw \dt \\
  & = \int_{\R} \int_{\R^d} \Phi_a^{1+\iota} (f^n (t,v)) (f^n)^{1+\iota} \ast_v |\cdot|^{\gamma(1+\iota)}  \psi (t)^{1+\iota} \dv  \dt\\
  & \le C_{\iota,\gamma,d}  \int_{\R} \| \Phi_a^{1+\iota}\|_{L^{r_i'}(\R^d)}\|(f^n)^{1+\iota}\|_{L^{\frac{p_\iota}{1+\iota}}(\R^d)}   \psi (t)^{1+\iota}   \dt
\end{align*}
with $\frac{1}{r_\iota'} + \frac{1}{r_\iota}=1$ and $1 + \frac{1}{r_\iota} = \frac{1+\iota}{p_\iota} - \frac{\gamma (1+\iota)}{d}$.
Since $\Phi_a = a \un_{\{ f > a\}}$, we have
\[ \| \Phi_a^{1+\iota}\|_{L^{r_\iota'}(\R^d)} = a^{1+\iota} |\{ f > a \}|^{\frac1{r_\iota'}} \le a^{1+\iota - \frac{1}{r_\iota'}} M_0
\]
with $M_0$ from \eqref{e:hydro}. We use this estimate to write,
\begin{align*}
  \int_{\R} \int_{\R^d \times \R^d} \Phi_a^{1+\iota} (f^n (t,v))
  &(f^n)^{1+\iota} (t,w) |v-w|^{\gamma(1+\iota)}  \psi^{1+\iota} (t) \dv \dw \dt \\
  & \le C_{\iota,\gamma,d} a^{1+\iota - \frac{1}{r_\iota'}} M_0 \int_{\R} \|f^n\|^{1+\iota}_{L^{p_\iota}(\R^d)}   \psi (t)^{1+\iota}   \dt.
\end{align*}
We now use \eqref{e:piota} to conclude that the sequence is uniformly integrable. 

We next prove that this sequence  is tight.
\begin{align*}
  & \int_{\R} \int_{\R^d \times \R^d} \Phi_a (f^n (t,v)) (f^n) (t,w) |v-w|^{\gamma} (|v| +|w|)^\iota \psi (t) \dv \dw \dt \\
  & \le 2^\iota \int_{\R} \int_{\R^d \times \R^d} \Phi_a (f^n (t,v)) (f^n) (t,w) |v-w|^{\gamma} (|v|^\iota +|v-w|^\iota) \psi (t) \dv \dw \dt \\
  & =  2^\iota \int_{\R} \int_{\R^d \times \R^d} \Phi_a (f^n (t,v)) \bigg( ((f^n) \ast_v |\cdot|^{\gamma}) |v|^\iota + f^n \ast_v |\cdot|^{\gamma+\iota} \bigg) \psi (t) \dv \dw \dt \\
& = (I) + (II).
\end{align*}
The second term can be treated as above. As far as the first term is concerned, we can use that $f^n$ has finite second moment to handle it.

\paragraph{Weak solution of the equation.}
It is also possible to use the piece of information above to pass to the limit in the weak
formulation of the equation. 
\end{proof}

\subsection{Vanishing viscosity}

We now prove Proposition~\ref{p:existence} by considering a vanishing viscosity $\eps_n$.
\begin{proof}[Proof of Proposition~\ref{p:existence}]
Let $f^n$ be the solution of the viscous Boltzmann equation given by Lemma~\ref{l:existence-approx-2}
associated with a viscosity $\eps_n = 1/n$.

\paragraph{Compactness and strong convergence.}
We use \cite[Proposition~0.2]{chaker2022entropy} in order to get the following lower bound on the entropy dissipation estimate,
\begin{align*}
  D(f^n) & = \int_{\R^d} Q (f^n,f^n) \ln f^n \dv \\
  & \ge c \int_{\R^d \times \R^d} \frac{(\sqrt{f^n} (v') - \sqrt{f^n} (v))^2}{d_{\mathrm{GS}} (v,v')^{d+2s}} (\langle v \rangle \langle v' \rangle)^{\frac{\gamma+2s+1}{2}} \un_{\{ d_{\mathrm{GS}} (v,v') \le 1\}}\dv \dv' - C M_0^2
\end{align*}
where $c,C$ only depend on $d,\gamma,s$ and $m_0,M_0,E_0,H_0$ from \eqref{e:hydro}. We recall that
\[ d_{\mathrm{GS}} (v,v') = \sqrt{|v-v'|^2 + \frac14 (|v|^2 -|v'|^2)^2}.\]
In particular for $v,v' \in B_R$, we have
\[ d_{\mathrm{GS}} (v,v') \le \langle R \rangle  |v-v'|\]
where we abuse notation by writing $\langle R \rangle = \sqrt{1+R^2}$. 
In particular,
\[ D (f^n) \ge  c \int_{B_R \times B_R} \frac{(\sqrt{f^n} (v') - \sqrt{f^n} (v))^2}{|v-v'|^{d+2s}} \langle R \rangle^{\gamma-d+1} \un_{\{ |v-v'| \le \langle R \rangle^{-1}\}}\dv \dv' - C M_0^2 .\]
This implies that
\[ \sqrt{f^n} \text{ is bounded in } L^2 ((0,T), H^s (B_R)) \text{ for all } T,R>0.\]
In particular, for $\iota < s$,
\[ \frac{\sqrt{f^n}(t,v) - \sqrt{f^n} (t,v')}{|v-v'|^{s+ \iota}} \text{ is bounded in } L^2 \left((0,T) \times B_\R \times B_R, \dt \frac{\dv \dv'}{|v-v'|^{d-2 \iota}} \right) .\]
We deduce from conservation of mass that
\[ \sqrt{f^n}(t,v) - \sqrt{f^n}(t,v') \text{ is bounded in } L^2 \left((0,T) \times B_R \times B_R,  \dt \frac{\dv \dv'}{|v-v'|^{d-2 \iota}} \right).\]
We conclude that
\[ \frac{f^n(t,v) - f^n (t,v')}{|v-v'|^{d+(s-\iota)}} \text{ is bounded in } L^1 \left((0,T) \times B_\R \times B_R, \dt \dv \dv'\right) .\]
In other words, we get on the one hand,
\[ f^n \text{ is bounded in } L^1 ((0,T),W^{1,s-\iota}(B_R)) \text{ for all } T,R>0.\]
On the other hand, we can argue as in the proof of Lemma~\ref{l:existence-approx-2}, see \eqref{e:dtf-bound} and get that
\[ \partial_t f^n \text{ is bounded in } L^1((0,T), W^{-2,1} (B_R)) \text{ for all } T,R>0.\]
We conclude that $f^n$ is compact in $L^1((0,T) \times B_R)$, in particular there exists a subsequence that converges in
$L^1$ and almost everywhere in $(0,+\infty) \times \R^d$ towards a non-negative function $f$. 

\paragraph{Uniform integrability.}
We can use \cite[Theorem~0.1]{chaker2022entropy} and get 
\begin{align*}
  D(f^n) & = \int_{\R^d} Q (f^n,f^n) \ln f^n \dv \\
  & \ge c \| f^n\|_{L^{p_0}_{k_0} (\R^d)} - C M_0^2
\end{align*}
with $\frac1{k_0} = 1- \frac{2s}d$ and $k_0= \gamma +2s -\frac{2s}d<0$. The constants
$c,C$ only depending on $d,s,\gamma$ and bounds from \eqref{e:hydro}.
In particular,
\[ f^n \text{ is bounded in } L^1 ((0,T), L^{p_0}_{k_0} (\R^d)).\]

\paragraph{Tightness.}
With almost everywhere convergence, we can also ensure that the function $f$ has a second moment
for almost every time $t>0$,
\[ \int_{\R^d} f (t,v) |v|^2 \dv \le E_0.\]
(it is enough to use Fatou's lemma in \eqref{e:energy}). 

\paragraph{Passing to the limit.}
Strong convergence together with uniform integrability and the second moment estimate  allows us to
pass to the limit in the family of inequalities associated with convex functions $\varphi_a$ for $a>0$.

It is also possible to pass to the limit in the weak formulation of equation and get
  weak solutions of the Boltzmann equation.
\end{proof}

\bibliographystyle{plain}
\bibliography{very-soft}

\end{document}